\newtheorem{theorem}{Theorem}[section]
\newcommand{\mtp}{\ensuremath{\texttt{a}}}
\newcommand{\alphamtp}{\ensuremath{\texttt{b}}}
\newcommand{\ptp}{\ensuremath{\texttt{a}_*}}
\newcommand{\alphaptp}{\ensuremath{\texttt{b}_*}}
\newcommand{\mpar}{\ensuremath{\mathfrak{p}}}
\newcommand{\ppar}{\ensuremath{\mathfrak{p}_*}}
\newcommand{{\Sone}}{\ensuremath{S_{0,1}}}
\newcommand{{\Stwo}}{\ensuremath{S_{1,0}}}
\newcommand{{\sone}}{\ensuremath{s_{0,1}}}
\newcommand{{\stwo}}{\ensuremath{s_{1,0}}}
\newcommand{\Sdelta}{\ensuremath{B}}
\newcommand{\reflection}
{\ensuremath{\varrho}}
\newcommand{\balpha}{\boldsymbol{\alpha}}
\newcommand{\btau}{\boldsymbol{\tau}}
\newcommand{\brho}{\boldsymbol{\rho}}
\renewcommand{\S}{S}
\newcommand{\s}{s}
\newcommand{\Pf}{\ensuremath{\mathrm{Pf}}}
\newcommand{\Ai}{\ensuremath{\mathrm{Ai}}}
\newcommand*\circled[1]{\tikz[baseline=(char.base)]{
            \node[shape=circle,draw,inner sep=2pt] (char) {#1};}}
\newcommand{\W}{W}
\newcommand{\V}{V}
\newcommand{\z}{z}
\newcommand{\bt}{\mathbf{t}}
\newcommand{\bB}{\mathbf{B}}
\newcommand{\bD}{\mathbf{D}}
\newcommand{\br}{\mathbf{r}}
\newcommand{\bk}{\mathbf{k}}
\newcommand{\bh}{\mathbf{h}}
\newcommand{\bx}{\mathbf{x}}
\newcommand{\bz}{\mathbf{z}}
\newcommand{\bs}{\mathbf{s}}
\newcommand{\by}{\mathbf{y}}
\newcommand{\bS}{\mathbf{S}}
\newtheorem{corollary}[theorem]{Corollary}
\newtheorem{lemma}[theorem]{Lemma}
\newtheorem{remark}[theorem]{Remark}
\newtheorem{proposition}[theorem]{Proposition}
\DeclareMathOperator{\tr}{tr}
\newcommand{\Wdef}{\begin{equation}
    \begin{split}
        &W= (I-W_1\overline{1}^tW_{1,2}\overline{1}^tW_{2,3}\cdots\overline{1}^t W_{n-1,n}\overline{1}^tW_{n+1}),\\
&W_{i,i+1} = \mtp^{-u_i}\ptp^{-d_i},\quad  W_1 = \mtp^{u}, \quad W_{n+1} = \ptp^{d}. \\
    \end{split}
\end{equation}}
\title{Totally Asymmetric Simple Exclusion Process in the Half-Space}
\author{Xincheng Zhang}
\address{University of Toronto}
\begin{document}


\begin{abstract}
In this work, we present the multi-point probability distribution of the totally asymmetric simple exclusion process (TASEP) in a half-space, starting from a general deterministic initial condition. More precisely, let $h(t,x)$ denote the height function of TASEP at position $x$ and time $t$; we provide an explicit formula for
\begin{equation*}
\mathbb{P}(h(t,y_1)\leq s_1, \ldots, h(t,y_m)\leq s_m).
\end{equation*}
The formula presented is well-suited for the scaling limit analysis. By applying a 1:2:3 scaling, we derive the probability distribution for the half-space KPZ fixed point, which is conjectured to be the universal process for the limit of the KPZ universality models restricted to a half-space.

\end{abstract}

\maketitle
\tableofcontents

\section{Introduction}

The Kardar-Parisi-Zhang(KPZ) universality class contains a wide range of random surface growth models and planar geometry models, which are conjectured to have the same large time behavior. One of the most important features of the universality is that all models have a 1:2:3 subdiffusive scaling. Depending on the point of view of the models, various limiting objects have been found or constructed in the last 25 years \cite{BDJ99,J2000, PS02Airy,MQR21,DOV22,BSS24}. Many of the objects are constructed through the integrability of some of the models in the class. Almost all models in the class can be viewed as a space-time random height function $h(t,x)$. It is conjectured that they all converge to the same limiting space-time process $\mathfrak{h}(t,x)$, which is \emph{the KPZ fixed point} \cite{MQR21}. The statement is only proved for a few models \cite{MQR21,DV22thescalingoflis,wu2023,ACB24}. For more background on the KPZ universality class, see \cite{QuaintrotoKPZ, CorKPZequation,BGintro,zygintro,BDSintro}.

Almost all models in the KPZ universality class have a half-space version (for surface growth models) or a symmetrized version (last-passage percolation models). In addition to the same mechanism as for the full-space models in the bulk, the half-space models are parameterized by an extra parameter, which characterizes the behavior at the origin. Considering as a space-time random processes $h(t,x),x>0$ with a formal boundary condition $\partial_x h(t,x)|_{x=0} = \alpha$, these models are also conjectured to converge to the same universal limit process, \emph{the half-space KPZ fixed point} $\mathfrak{h}(\bt,\bx)$ with the parameter $\balpha$. 

Two goals of the paper are to give an explicit Fredholm Pfaffian formula of the multipoint distribution of the half-space KPZ fixed point and the half-space TASEP, starting from a general deterministic initial condition. TASEP is a discrete model in the KPZ universality class which is exactly solvable. We will introduce the model in the following section.

\subsection{Half-space TASEP and previous results}
The half-space TASEP with rate $\alpha$ is a continuous-time Markov process on $\mathbb{Z}_{\geq 0}$. The particles jump to the right in continuous time at rate $1$ with the exclusion rule. There is a reservoir of an infinite number of particles at the origin, and the particles jump to the site $0$ at a rate $\alpha$ if the site $0$ is empty. Let $\eta: \mathbb{Z}_{\geq 0}\to \{0,1\}$ be the occupation variables. $\eta_t(x)$ is 1 if there is a particle at position $x$  at time $t$ and $0$ otherwise. For a finite range function $f:\{0,1\}^{\mathbb{Z}_{\geq 0}}\to \mathbb{R}$, the generator is given by:
\begin{equation*}
 \mathcal{L} f(\eta)=\alpha(f(1,\eta_2,\eta_3,\cdots)-f(\eta_1,\eta_2,\cdots))+\sum_{x\in\mathbb{Z}^+}\eta_x(1-\eta_{x+1})(f(\eta_{x,x+1})-f(\eta))
 \end{equation*} 
 where $\eta_{x,x+1}$ is obtained by switching the occupation variables $\eta$ at sites $x$ and $x+1$. There is also a height representation $h(t,x),x\geq 0$, where
 \begin{equation}\label{def:heightfunction}
  h(t,x)=  \begin{cases}
      &h(0,0)-2J_{t}-\sum_{i=0}^x(1-2\eta_t(i)),\quad x\geq 1\\
      &h(0,0)-2J_{t}, \quad x=0\\
  \end{cases}
  \end{equation}
  where $J_{t}$ is the number of new particles that have entered the system up to time $t$. Notice that when a new particle jumps into the system, the height function only changes at $0$.
  
   TASEP is one of the solvable models in the KPZ universality class. The full space TASEP was first solved by \cite{Schutz97} using coordinate Bethe ansatz. The formula given in \cite{Schutz97} is not conducive to asymptotic analysis. Later, \cite{Sas05} \cite{BFPS07} rewrite the determinant and formulate the biorthogonalization problem, from which the transition probability can be derived. Under some special initial conditions, the biorthogonalization problem is solved \cite{BFP06}\cite{BFP08}\cite{BFPS07}. In \cite{MQR21}, the biorthogonalization problem was solved with a general initial condition. Later, a multi-time multi-point formula is solved in \cite{JR21multipoint,Liu22multipoint} for some special initial conditions.
   
  The first studies on half-space models in the KPZ universality class are symmetrized LPP with geometric weights, in \cite{BR01srp,BR01asymp,BR01aais}, where the phase transition of the one-point distribution at the diagonal has been established. The large time behavior of the model depends on the size of the injection parameter. There is a critical value $\alpha_0$ for the injection parameter in different models. In the case where $0\leq \alpha<\alpha_0$, the fluctuation is of order $N^{1/3}$ and has the Tracy-Widom GSE distribution; if $\alpha=\alpha_0$, the fluctuation is of order $N^{1/3}$ and has the Tracy-Widom GOE distribution; if $\alpha>1$, the fluctuation is of order $N^{1/2}$ and has the Gaussian distribution. The $N^{1/2}$ is a non-KPZ fluctuation. This is purely due to the effect of the central limit theorem: the effect of the diagonal is too large that the last passage value behaves like the sum of i.i.d. random variables on the diagonal. Furthermore, depending on whether the position is away from the origin or not, one can see different types of limiting distribution. It is interesting that the Tracy-Widom GSE distribution is not present in the full space models.

  Later, \cite{SI03} studied the PNG model in the half-space, in which they studied the multipoint distribution of the model, with or without nucleation at $0$. The fluctuation of the process near the origin gives the symplectic-unitary transition in random matrix theory in the subcritical case and gives the orthogonal-unitary transition in the critical case \cite{PTG99}.

  Later, the exponential last passage percolation in the first quadrant is studied in \cite{BBCS16,BBCS16fep}. They start from the geometric LPP problem, scale it to the exponential LPP, and then adopt an asymptotic analysis. The formula is established through realizing the dynamics of half-space TASEP as a marginal of the Pfaffian Schur process. For more about the Pfaffian Schur process, see \cite{BR05Eynardmehtatheorem,DJPM18,SI03}. They derive the multipoint distribution of a process that interpolates between the symplectic-unitary regime and the orthogonal-unitary regime. That requires a weak scaling of the parameter $\alpha$ around its critical value, so that in the limit, the effect of boundary injection is still seen.

 From the point of view of lateral growth, all these models are solved from the narrow wedge at $0$ initial condition. In terms of the half-space TASEP, that is equivalent to the model starting with all sites being empty initially.

    For TASEP starting from the product Bernoulli initial condition, which is equivalent to the half-space stationary LPP model, a formula is derived in \cite{DPA20,DPA22}. They also utilize the connection of the model to the Pfaffian Schur process.

    In this paper, we give an explicit Fredholm Pfaffian formula for the following object.
    \begin{theorem}\label{thm:introthm1}
        Let $h(t,x)$ be the half-space TASEP height function defined in \eqref{def:heightfunction}, starting from any deterministic initial configuration. Let $ y_1< y_2<\cdots < y_m\in \mathbb{Z}_{\geq0}, s_1,\cdots, s_m\in \mathbb{Z}$ be such that $y_i+s_i$ have the same parity and $|s_{i+1}-s_i| <y_{i+1}-y_i$. Then,
        \begin{equation}
            \mathbb{P}(h(t,y_1)\leq s_1, \cdots, h(t,y_m)\leq s_m) = \Pf(J+JK)_{\{1,\cdots,m\}\times L^2[0,\infty)}.
        \end{equation}
        The explicit definition of the kernel is given in \eqref{eqn:halfmultipoint}.
    \end{theorem}
An important feature of the formula is that the initial condition information appears in the formula as the transition density of a Brownian bridge hitting probability, which is the same as in the full space case \cite{MQR21}.

There are many other models in the KPZ universality class that are studied in the half space. For example, polymer models with a wall \cite{BD21,Timo12,BBC15,BCD23,SayanZhu24}, stochastic six-vertex model in the half quadrant \cite{BBCW17,GGMW24}, the half-space ASEP \cite{BC24,He24current,Parekh19}, the half-space KPZ equation \cite{BKL20halfspacestatKPZequation,DKLT20,BKL22,KD20replicabetheansatz}, the half-space MacDonald process \cite{BBC20halfspaceMac}, and many algebraic structures related to half-space models \cite{IMS22,IMS23,Theo23}. For more studies related to the properties of models in half-space, see \cite{he2022shiftinvariancehalfspace,FO24,chen2024secondclassparticlehalfline}.


TASEP under other geometries is also studied; for example, TASEP in the periodic domain is studied in \cite{BL21,BLS22,Liao22}. 
\subsection{1:2:3 Scaling limit}
The formula we derived in Theorem \eqref{thm:introthm1} is suitable for asymptotic analysis. We consider the following scaling of the height function. For $\varepsilon>0$, the 1:2:3 rescaled TASEP height function  is 
    \begin{equation}\label{eqn:halfheightscaleintro}
            \bh^{\varepsilon}(\bt,\bx):=\varepsilon^{1/2}[h(2\varepsilon^{-3/2} \bt, 2\varepsilon^{-1}\bx)+\varepsilon^{-3/2}\bt],
            \end{equation}
        with the initial condition also scaled as, 
        \begin{equation}\label{eqn:halfinitialscale}
            \bh^\varepsilon(0,\bx) := \varepsilon^{1/2}h(0,2\varepsilon^{-1}\bx).
        \end{equation}
    This scaling corresponds to studying the scaling limit for perturbations of the density $1/2$. 

Since we solve the half-space TASEP starting from a general deterministic initial condition, we can take the scaling limit of the model and access the transition probability of the limit process, which is \emph{the half-space KPZ fixed point}. Various aspects of the half-space KPZ fixed point are already known from the previous work. The probability distribution of the half-space fixed point starting from a narrow wedge initial condition at $0$ is in \cite{BBCS16,BBCS16fep}. At a fixed time, the object should be thought of as the half-space Airy$_{2}$ process, and the limit process derived in \cite{DPA22} is the half-space Airy stat process. Here, we give the formula with a general deterministic initial condition.  

We need to define some more objects. For any $a,b\in\mathbb{Z}, \bx,\brho,\bz_1,\bz_2 \in \mathbb{R},\bt\in[0,\infty)$, We define
\begin{equation}
        \begin{split}
             &\bS_{a,b}^{\bt,\bx}(\bz_1,\bz_2)=\int_{C_{c}^{\pi/3}}\frac{(w+\brho)^b}{(-w+\brho)^a}e^{\bt w^3/3+\bx w^2 +(\bz_1-\bz_2) w}dw,
        \end{split}
    \end{equation}
    where  $c <-|\brho|$ and $C_{c}^{\pi/3} = \{c+re^{\pm i\pi/3}:r\in[0,\infty)\}$ with the orientation going from $\infty e^{-i\pi/3}$ to $\infty e^{i\pi/3}$. For any $c\in \mathbb{Z}$, define
    \begin{equation}
            (D^c\bS)_{a,b}^{\mathrm{hypo}(\mathfrak{h}),\bt,\bx}(\bz_1,\bz_2)= \mathbb{E}_{\bB(0)=\bz_1}[D^c\bS_{a,b}^{\bt,\bx-\btau}(\bB(\btau),\bz_2)1_{\btau<\infty}].
    \end{equation}
    where $\bB(x)$ is a Brownian motion with diffusion coefficient $2$ and $\btau$ is the Brownian hitting time of the hypograph of $\mathfrak{h}$. $D$ is the differential operator. Define $\mpar = \brho-D, \ppar = \brho+D$. For a more precise definition, see Section \eqref{sec:pointwiselimit}.  The state space will be $\mathrm{UC}$, the upper semi-continuous function on half-space with at most linear growth.

    Lastly, we introduce the Fredholm Pfaffian and the Fredholm determinant. For a trace-class operator $K:\mathbb{X}\times \mathbb{X}\to \mathbb{R}$ where $\mathbb{X}$ is a Hilbert space with measure $\mu$, 
    \begin{equation}
        \det(I+K)_{L^2(\mathbb{X},\mu)} := 1+\sum_{k=1}^\infty \frac{1}{k!}\int_\mathbb{X}\cdots\int_\mathbb{X}\det(K(x_i,x_j))_{i,j=1}^kd\mu^k(x_1,\cdots,x_k).
    \end{equation}
    For more properties and background, see \cite{paterlaxfunctional, Simon1979TraceIA}.
    The Fredholm Pfaffian was introduced in \cite{rains2000}, which appears naturally in models with symmetries. In particular, Tracy-Widom GOE and GSE distributions were originally given as Fredholm Pfaffian. In this paper, we will only use the fact that Fredholm Pfaffian is the square root of a Fredholm determinant. More precisely, let $J = \begin{pmatrix}
        0 &1 \\
        -1 & 0
    \end{pmatrix}\delta_0(x-y)$,
    \begin{equation}
\Pf(J+JK)_{L^2(\mathbb{X},\mu)}=\sqrt{\det(I+K)_{L^2(\mathbb{X},\mu)}},
    \end{equation}
    and we will only work with Fredholm determinant.

    Now we are ready to state our main convergence theorem.
    \begin{theorem}
            Let $\mathfrak{h}_0 \in \mathrm{UC}$. Let $\bh^\varepsilon(\bt,\bx)$ be the rescaled TASEP height function defined in \eqref{eqn:halfheightscaleintro}. Assume $\bh^\varepsilon(0,\bx) \to \mathfrak{h}_0$ in UC. Then for any $ \by_1<\cdots<\by_m \in\mathbb{R}_{\geq 0}, \bs_1,\cdots,\bs_m \in \mathbb{R}$, 
            \begin{equation}\label{eqn:halffixedpointformula}
               \lim_{\varepsilon\to0}\mathbb{P}(\bh^\varepsilon(\bt,\by_1)\leq \bs_1,\cdots, \bh^{\varepsilon}(\bt, \by_m)\leq \bs_m) =\Pf(J+ JK^{\mathrm{fp}})_{\{1,\cdots,m\}\times L^2[0,\infty) }   
            \end{equation}  
            where $JK^{\mathrm{fp}}$ maps $\{1,\cdots,m\}\times \mathbb{R}$ to a $2 \times 2$ antisymmetric matrix.
            \begin{equation}
                K^{\mathrm{fp}}_{ij}=R^{\mathrm{fp}}_{ij}+\tilde{K}^{\mathrm{fp}}_{ij}
            \end{equation}
            where
            \begin{equation}
                    \begin{split}
                   R^{fp}_{ij} &=
                   \begin{pmatrix}
                           1_{j<i} e^{\bs_i \bD}e^{(\by_i-\by_j)\bD^2}e^{-\bs_j \bD} & -e^{\bs_i\bD+\by_i\bD^2}\overline{\mpar^{-1}\bD\ppar^{-1}}e^{-\bs_j\bD+\by_j\bD^2}\\
                            0 &  1_{i<j}e^{\bs_i \bD}e^{(\by_j-\by_i)\bD^2}e^{-\bs_j \bD}
                   \end{pmatrix}, 
            \end{split}
            \end{equation}
            and $\tilde{K}^\mathrm{fp}_{ij}$ is
              \begin{equation}
           \begin{split}
               \begin{pmatrix}
                   e^{\bs_i \bD} & 0\\
                   0 & e^{\bs_i \bD}
           \end{pmatrix}
                 \begin{pmatrix}
                     -(\bS_{0,0}^{\mathrm{hypo}(\mathfrak{h}_0),\bt,\by_i})^* &     \bD\bS_{1,-1}^{-\bt,\bx_1+\by_i}\\
                     (-(D^{-1}\bS)_{-1,1}^{\mathrm{hypo}(\mathfrak{h}_0),\bt,-\by_i})^*& -\bS_{0,0}^{-\bt,\bx_1-\by_i}
                 \end{pmatrix}
                 \begin{pmatrix}
                     I & -e^{\bx_1\bD^2}\overline{\mpar^{-1}\bD\ppar^{-1}}e^{\bx_1\bD^2}\\
                     0 & I
                 \end{pmatrix}\\
                 \begin{pmatrix}
                       \bS_{0,0}^{\bt,\bx_1-\by_j} & -\bD\bS_{1,-1}^{\bt,\bx_1+\by_j}\\
                         -(\bD^{-1}\bS)^{\mathrm{hypo}(\mathfrak{h}_0),\bt,-\by_j}_{-1,1} & \bS^{\mathrm{hypo}(\mathfrak{h}_0),\bt,\by_j}_{0,0}
                 \end{pmatrix}
                 \begin{pmatrix}
                   e^{-\bs_j \bD} & 0\\
                   0 & e^{-\bs_j \bD}
           \end{pmatrix}.
           \end{split}
             \end{equation}   
    \end{theorem}   
The importance of solving the transition probability from a general initial condition is that it allows one to establish the existence of the half-space KPZ fixed point through the explicit formula. Due to the several technical points, we will present the existence in a separate paper. 

A natural question is whether our formula reduces to the formula in \cite{BBCS16} under the narrow wedge at $0$ initial condition. The answer is that the kernel does not reduce directly and we did not prove that the Fredholm determinants are equal through the formula (but since they solve the same model, they must be equal). For more comments on this, see Remark \eqref{rml:relationtobbcspaper}.
\subsection{Method and organization of the paper}
The main difficulty in solving the half-space TASEP (or full-space TASEP) from a general initial condition is that there is no algebraic correspondence to the Pfaffian Schur process. The Pfaffian Schur process can only be utilized for a few initial conditions. The other route, which is how people solve the full-space TASEP, is by a biorthogonalization argument for Schutz's type of formula. However, a Schutz's type of formula is missing in the half-space case. Meanwhile, people are working on this second direction through skew-biorthogonalization arguments for formulas from half-space six-vertex models. See \cite{AdGMW24}, and a work in preparation by de Gier, Mead, Remenik, and Wheeler \cite{AdGMW25}.

We will solve the half-space TASEP with a guess-and-check approach. Consider a continuous-time Markov process $X_t$ in state space $(S,\mathcal{S})$ with the generator $\mathcal{L}$. For any $A \in \mathcal{S},x\in S$, the Markov transition function $P_t(x,A)$ satisfies the Kolmogorov backward equation:
\begin{equation}\label{eqn:kol}
    \begin{cases}
        (\partial_t-\mathcal{L})P_t(x,A) = 0,\\
        \lim_{t\to 0}P_t(x,A) = 1_{x\in A}.
    \end{cases}
\end{equation}
If we can find a function $\tilde{P}_t(x,A)$ that satisfies \eqref{eqn:kol}, then by the uniqueness of the equation for the Markov generator (see \cite{liggettIPS}, Chapter 1, Theorem 2.15), the function $\tilde{P}_t(x,A)$ must be the transition function for the Markov process.

 We will use this general scheme to solve the one-time multipoint formula for the half-space TASEP, which is 
 \begin{equation*}
     \mathbb{P}(h(t,y_1)\leq s_1,\cdots,h(t,y_m)\leq s_m).
 \end{equation*}
 The formula is largely inspired by \cite{MQR21}\cite{NQR20}\cite{BBCS16}\cite{BBCS16fep}. In the first paper, where the full-space TASEP is solved, it reveals the key philosophy that ``initial conditions should appear in the formula as a hitting probability". In the second paper, the Kolmogorov equation is verified for the formula in \cite{MQR21}. In the last two papers, the multipoint formula is given for half-space TASEP starting from the narrow wedge at $0$ initial condition. One should think that the formula in \cite{BBCS16} contains information for a general final configuration. 

 Another key ingredient that facilitates the guess part is the skew-time reversal invariance property of the half-space TASEP:
 \begin{equation*}
     \mathbb{P}(h(t,x)\leq g(x),\forall x|h(0,\cdot)=f) =\mathbb{P}(h(t,x)\leq -f(x),\forall x|h(0,\cdot)=-g).
 \end{equation*}
 This property leads to the conjecture that the appearance of the initial and final configurations should be similar in the formula. Combining this idea while investigating \cite{BBCS16}, we found the multipoint formula for a general initial condition. In other words, the formula can be thought of as an ``ansatz'' based on the Pfaffian Schur process formula. 
 
 The paper is organized as follows. The introduction of the main formula and all definitions are given in Section \eqref{subsec:notationandresult}, and afterwards in \eqref{subsec:propertyofoperators}, we develop some properties of the kernel. In Section \eqref{sec:kolm} and Section \eqref{subsec:initialcondition}, we prove that the formula we present satisfies \eqref{eqn:kol}.

 In Section \eqref{sec:scalinglimit}, we derive the scaling limit formula. Section \eqref{subsec:transformationofkernel} transforms the kernel into a form that is suitable for the scaling limit. In Section \eqref{sec:pointwiselimit}, we compute the pointwise limit of the kernel, and in Section \eqref{sec:tracenorm}, we derive the uniform bound on the trace norm of the kernel, which ensures the convergence of the Fredholm determinant.
\subsection{Acknowledgments}
 The author thanks Jeremy Quastel for the idea that one should be able to guess the formula from solving the Kolmogorov equation perspective. The author is also grateful to Jeremy Quastel for many discussions on the problems and writings of the paper. The author also thanks Henry Hu for some discussions on the problem and Zongrui Yang for pointing out many related work in the literature.
 This research was partially supported by NSERC.
 
\section{Solving the half-space TASEP}
\subsection{Notation and the main result}\label{subsec:notationandresult}
The half-space TASEP with rate $\alpha$ is a continuous-time Markov process on $\mathbb{Z}_{\geq 0}$. The particles jump to the right in continuous time at rate $1$ with the exclusion rule, i.e. if the site is already occupied when the particle tries to jump, the jump is suppressed and the waiting time starts to count again. The new particles jump to the site $0$ at a rate $\alpha$ if the site $0$ is empty. Let $\eta: \mathbb{Z}_{\geq 0}\to \{0,1\}$ be the occupation variables. $\eta_t(x)$ is 1 if there is a particle at position $x$  at time $t$ and $0$ otherwise. For a finite range $f:\{0,1\}^{\mathbb{Z}_{\geq 0}}\to \mathbb{R}$, the generator is given by:
\begin{equation*}
 \mathcal{L} f(\eta)=\alpha(f(1,\eta_2,\eta_3,\cdots)-f(\eta_1,\eta_2,\cdots))+\sum_{x\in\mathbb{Z}^+}\eta_x(1-\eta_{x+1})(f(\eta_{x,x+1})-f(\eta))
 \end{equation*} 
 where $\eta_{x,x+1}$ is obtained by switching the occupation variables $\eta$ at the sites $x$ and $x+1$. There is also a height representation $h(t,x),x\geq 0$, where
 \begin{equation}\label{def:heightfunction}
  h(t,x)=  \begin{cases}
      &h(0,0)-2J_{t}-\sum_{i=0}^x(1-2\eta_t(i)),\quad x\geq 1\\
      &h(0,0)-2J_{t}, \quad x=0\\
  \end{cases}
  \end{equation}
  where $J_{t}$ is the number of new particles that have entered the system up to time $t$. Notice that when a new particle jumps into the system, the height function only changes at $0$.

We are interested in the following probability distribution:
\begin{equation*}
    \mathbb{P}(h(t,x;h_{\mathrm{init}})\leq h_{\mathrm{final}}).
\end{equation*}
Now both $h_{\mathrm{init}}$ and $h_{\mathrm{final}}$ are functions on nonnegative integer points, representing the height function of TASEP. There are slightly different assumptions on the types of functions $h_{\mathrm{init}}$ and $h_{\mathrm{final}}$ that are allowed. We will assume that $h_{\mathrm{init}}$ has a finite number of peaks (local maxima) and $h_{\mathrm{init}}(x) \to -\infty$ as $x\to  \infty$. $h_{\mathrm{final}}(x)$ has a finite number of troughs (local minima) and $h_{\mathrm{final}}(x) \to \infty$ as $x\to \infty$. Under these assumptions, $h_{\mathrm{init}}$ is uniquely determined by the positions of the peaks $x_i$ and the heights of the peaks $h_i$. We use notation 
\begin{equation}\label{eqn:halfpeakrepre}
(\vec{x},\vec{h})_t=(x_1,h_1;x_2,r_2;\cdots x_n,h_n)_t,\quad 0 \leq x_1 < \cdots < x_n
\end{equation}
to denote the peak configuration at time $t$. If a tuple $(\vec{x},\vec{h})$ represents a configuration of TASEP, it satisfies the following parity constraints: $x_i+h_i$ all have the same parity; $|h_{i+1}-h_{i}|<x_{i+1}-x_i$. Similarly, $h_{\mathrm{final}
}(x)$ is uniquely determined by the position of the troughs $y_i$ and the heights of the troughs $s_i$. We use  
\begin{equation}\label{eqn:halftroughrepre}
   \{\vec{y},\vec{s}\} =\{y_1,s_1;y_2,s_2;\cdots y_m, s_m\},\quad 0 \leq y_1 < \cdots < y_m
\end{equation}
to denote it. We define one TASEP height function to be less than the other if the inequality is true coordinate-wise. Notice that the multipoint distribution of TASEP \[\mathbb{P}_{h_0}(h(t, y_1)\leq s_1;\cdots;h(t,y_m)\leq s_m)\] can be written as $\mathbb{P}((\vec{x},\vec{h})_t\leq \{\vec{y},\vec{s}\})$, where $h_0=(\vec{x},\vec{h})_0$. 

We now develop some notations for further discussion. Each such initial configuration can be thought of as having been obtained through a sequence of downward flips from the \emph{the primordial peak} configuration, which we denote as $(x_\mathrm{prim},h_\mathrm{prim})$, where
\begin{equation}\label{def:primcoord}
 x_{\mathrm{prim}} = \frac{h_n-h_1+x_n+x_1}{2}, \quad  h_\mathrm{prim} = \frac{h_1+h_n+x_n-x_1}{2}.
\end{equation}
We will refer to this configuration as \emph{the primordial peak} that corresponds to $(x_1,h_1;\cdots \linebreak x_n,h_n)$; see Figure (\ref{figure:halfTASEP}). In the figure, the point $(3,11)$ is \emph{the primordial peak} for the configuration $(0,8;2,8;5,9)$.

Now we want to introduce another set of variables that record the relative position of peaks with respect to \emph{the primordial peak}. Let $u_i, d_i$ be the number of wedges that go upward and downward from the peak $x_i$ to $x_{i+1}$, respectively. More precisely,
\begin{equation}\label{def:uidicoord}
u_i = (x_{i+1}-x_i+h_{i+1}-h_i)/2, \quad d_i = (x_{i+1}-x_i-h_{i+1}+h_i)/2.
\end{equation}
For a configuration with one single peak, all $u_i,d_i$ are $0$. Now, the configuration $(\vec{x},\vec{h})$ is equivalently parameterized by \emph{the primordial peak} and all $u_i,d_i$. We define
\[u = u_1+\cdots u_{n-1},\quad d = d_1+\cdots d_{n-1}.\]
It is easy to see that $u$ is the distance from \emph{the primordial peak} to the first peak and $d$ is the distance from \emph{the primordial peak} to the last peak; see Figure (\ref{figure:halfTASEP}). 
\begin{figure}[H]
    \centering
    \begin{tikzpicture}[scale=0.4]

\draw[very thin, gray, dotted] (-10,-2) grid (10,10);
\draw[very thin] (-10,9) -- (-1,0) -- (8,9); 
\draw[thick, ->] (-10,-2) -- (10,-2) node[right] {$x$};
\draw[thick, ->] (-1,-2) -- (-1,10) node[above] {$y$};


\draw[thick]  (-1,6) -- (0,5)--(1,6) --(2,5)-- (4,7) -- (11,0);
\draw[thick, dotted] (-4,3) --(-1,6) -- (2,9) -- (4,7);
 \draw[decorate,decoration={calligraphic brace, amplitude=10pt, raise=5pt}] 
    (-4,3) -- (2,9) node [midway, sloped, above=15pt, xshift=-1pt] {\tiny$l_{0,2}=6$};
    \draw[decorate,decoration={calligraphic brace, amplitude=10pt, raise=5pt}] 
     (2,9)--(5,6)  node [midway, sloped, above=15pt, xshift=-1pt] {\tiny$ r_{0,2}=3$};
\node[] at (0.5,7.5){\footnotesize$u=3$};
\node[] at (3.5,7.5){\footnotesize$d=2$};

\filldraw[red] (2,9) circle (4pt);
\node[right, red,shift={(-0.5,0.5)}] at (2,9) {\small$(3,11)$};
\filldraw[black] (-1,6) circle (4pt);
\filldraw[black] (4,7) circle (4pt);
\filldraw[black] (1,6) circle (4pt);
\end{tikzpicture}
    \caption{Configuration $(0,8;2,8;5,9)$ with \emph{the primordial peak} $(3,11)$}
    \label{figure:halfTASEP}
\end{figure}
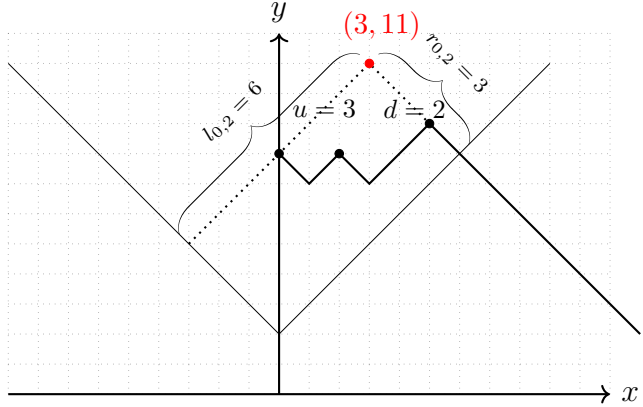 
Now we want to define the following `operator product ansatz' kernel parameterized by the configuration $(x_1,h_1;\cdots;x_n,h_n)$. We define an operator $W : \mathcal{S}(\mathbb{R})\to \mathcal{S}'(\mathbb{R})$:
\begin{equation}\label{def:Wdef}
    \begin{split}
        &W= (I-W_0\overline{1}^tW_{1,2}\overline{1}^tW_{2,3}\cdots\overline{1}^t W_{n-1,n}\overline{1}^tW_{n+1}),\quad n>1;\\
        &W=1_t,\quad n =1;\\
&W_{i,i+1} = \mtp^{-u_i}\ptp^{-d_i},\quad  W_0 = \mtp^{u}, \quad W_{n+1} = \ptp^{d}, \\
    \end{split}
\end{equation}
where
\begin{equation}\label{def:aastardef}
\mtp =1-2D, \quad \ptp = 1+2D.\\
\end{equation}
$D$ is the differential operator, in the distributional sense. $\mathcal{S}(\mathbb{R})$ is the space of Schwartz functions; $\mathcal{S}'(\mathbb{R})$ is the linear functional on the Schwartz space. Here, the choice of Schwarz functions is because $W_1$ and $W_{n+1}$ are differential operators.  $\mtp^{-1}, \ptp^{-1}$ are integral operators with the kernels
\begin{equation}\label{def:aastarinversedef}
    \mtp^{-1}(x,y) = \frac{1}{2}e^{(x-y)/2}1_{x\leq y}, \quad \ptp^{-1}(x,y) = \frac{1}{2}e^{(y-x)/2}1_{y\leq x}.
\end{equation}
Notice that $\mtp \mtp^{-1}(x,y) =\ptp \ptp^{-1}(x,y)= \delta(x-y)$, in the distribution sense. In this paper, we do not distinguish the integral operator from its integral kernel; the meaning should be clear from the context.

 $\overline{1}^t$ is the projection operator with the following multiplication kernel:
$1^{a}_b(x) = 1_{b<x <a}$. We use $\overline{1}$, $\underline{1}$, and $\overline{\underline{1}}$ whenever the endpoints are included, and $a$ or $b$ are omitted if they are $\infty$ or $-\infty$, respectively.
The index in $\W_{0},\W_{n+1}$ does not have meaning; it is simply for convenience of notation.

We have a similar definition for the trough configuration $\{y_1,s_1;\cdots;y_m,s_m\}$. Notice that if we flip the trough configuration, it becomes $(y_1,-s_1;\cdots;y_m,-s_m)$, which is a peak configuration. We will use $u',d',u_i',d_i'$ to denote the variables $u,d,u_i,d_i$ parameterized by $(y_1,-s_1;\cdots;y_m,-s_m)$.

Lastly, we want to define some variables that record the position of a primordial peak $(x_{\mathrm{prim}},h_{\mathrm{prim}})$ with respect to a trough $\{y,s\}$. We define the following two objects, which we will call \emph{cones}. Let $C_{x,y}$ be the cone starting at $(x,y)$, open to the top; that is, 
\begin{equation}\label{def:Coneupdef}
        C_{x,y}=\{(a,b)\in \mathbb{Z}^2: b > |a-x|+y\}.
\end{equation}
$C^{x,y}$ is the cone that starts at $(x,y)$ and opens to the bottom; that is,
\begin{equation}\label{def:Conedowndef}
        C^{x,y} = \{(a,b)\in \mathbb{Z}^2: b < -|a-x|+y\}.
\end{equation}
Now we define the following two variables.
\begin{equation}\label{def:lrtoConedef}
\begin{split}
&l_{p,q}(\vec{x},\vec{h}) := (h_n-q+x_n-p)/2, \quad r_{p,q}(\vec{x},\vec{h}):= (h_1-q-x_1+p)/2.\\
\end{split}
\end{equation}
$l_{p,q},r_{p,q}$ are the signed distances from \emph{the primordial peak} of $(\vec{x},\vec{h})$ to the left and right sides of the cone $C_{p,q}$, respectively. $l_{p,q},r_{p,q}$ are basically a change in the coordinate system for $(x_\mathrm{prim},h_\mathrm{prim})$. Whenever there is no confusion on $p,q$, the subscript will be omitted. See Figure \eqref{figure:halfTASEP} for all the geometric meanings of the variables.

We also need to define two operators that are similar to $\mtp,\ptp$. For $\alpha >0$, let
\begin{equation}\label{def: alphaDdef}
    \alphamtp = 2\alpha-1-2D, \quad \alphaptp = 2\alpha-1+2D.
\end{equation}
Now we are ready to state the main theorem.
 \begin{theorem}\label{thm:halfmultipoint}
    Assume that we start the half-space TASEP with the initial configuration having peaks at $(x_1,h_1;\ldots, x_n,h_n)$. The probability that at time $t$ it is below the configuration $\{y_1,s_1;\cdots;y_m,s_m\}$ is given by:
\begin{equation}\label{eqn:halfmultipoint}
\begin{split}
\mathbb{P}((x_1,h_1;\ldots; x_n,h_n)_t \leq \{y_1,s_1;\cdots;y_m,s_m\}) =\Pf(J+JK)_{\{1,\cdots,m\}\times L^2[0,\infty)},
\end{split}
\end{equation}
where $K$ maps $\{1,\cdots, m\}\times \mathbb{R}$ to a $2\times 2$  antisymmetric matrix. $K_{ij} =R_{ij}
+\tilde{K}_{ij}$
where 
\begin{equation}\label{eqn:defRij}
\normalfont
    R_{ij}= \begin{pmatrix}
     1_{j<i}(\ptp)^{-u'_{ji}}(\mtp)^{-d'_{ji}}&  -1_0\mtp^{r_i'}\ptp^{-l'_i}\overline{\alphamtp^{-1}D\alphaptp^{-1}}\mtp^{-l'_j}\ptp^{r_j'}1_0\\
    0 &1_{i<j}(\mtp)^{-u'_{ij}}(\ptp)^{-d'_{ij}}
\end{pmatrix},
\end{equation}
and 
\begin{multline}\label{eqn:ktilde}
        \tilde{K}_{ij} = \begin{pmatrix}
            -\S^{l_i',r_i'}_{1,0} & D \S^{l_i',r_i'}_{1,0} \\
            D^{-1}\S^{r_i',l_i'}_{0,1} & -\S^{r_i',l_i'}_{0,1}
        \end{pmatrix}\\
        \normalfont
\begin{pmatrix}
\mtp^{l-r}\alphamtp W^*\mtp^{r-l}\alphamtp^{-1} & -\mtp^{l-r}\alphamtp W^*\mtp^{r-l}\overline{\alphamtp^{-1}D\alphaptp^{-1}}\ptp^{r-l} W\ptp^{l-r}\alphaptp\\
0 & \ptp^{r-l}\alphaptp^{-1}W \ptp^{l-r}\alphaptp
\end{pmatrix}\\
\begin{pmatrix}
            \S^{r_j',l_j'}_{0,1} & D \S^{l_j',r_j'}_{1,0}\\
            D^{-1}\S^{r_j',l_j'}_{0,1}& \S^{l_j',r_j'}_{1,0}
        \end{pmatrix}.
\end{multline}
$\S^{i,j}_{a,b}$ is defined to be
\begin{equation}\label{def:Sijabdef}
\begin{split}
&\s_{a,b}^{i,j}(x,y) = \int_{\Gamma} e^{-(x+y)w}\frac{(1+2w)^j(2\alpha-1+2w)^b}{(1-2w)^i(2\alpha-1-2w)^a}\frac{dw}{2\pi i},\\
&\S_{a,b}^{i,j}(x,y) = \s_{a,b}^{i,j}(x,y)1_{x+y\geq 0}.
\end{split}
\end{equation}
$\Gamma$ is a simple, positively oriented loop that includes $w=1/2$ and $w=\pm (2\alpha-1)/2$.

Variables $l_i,l_i',r_i,r_i'$ are defined to be
\begin{equation}
    \begin{split}
        l_i&=l_i' =l_{0,s_i-y_i}(\vec{x},\vec{h}) =  (h_n+x_n-s_i+y_i)/2,\\
    r_i &= r_{0,s_i-y_i}(\vec{x},\vec{h}) =(h_1-x_1-s_i+y_i)/2,\\
    r_i' &= r_{0,-x_n-h_n}(\{y_i,s_i\}) = (-s_i-y_i+x_n+h_n)/2.
    \end{split}
\end{equation}
Notice that $l_i-r_i=x_\mathrm{prim}$ for any $i$. So, the variable $l-r$ in the middle matrix of $\tilde{K}_{ij}$ is just $x_{\mathrm{prim}}$.
\end{theorem} 
The kernel has another equivalent form, which is given in the following lemma, which will be proved later.
\begin{lemma}\label{lem:newformofkernel}
    The operators $\normalfont \mtp,\ptp,\alphamtp,\alphaptp$ that surround the operator $W,W^*$ act on the operators $\S$ on two sides by changing the index. We have the following equivalent form of the kernel $\tilde{K}_{ij}$. Let $\normalfont H=\mtp^{r-l}\overline{\alphamtp^{-1}D\alphaptp^{-1}}\ptp^{r-l} $.
\begin{equation}\label{eqn:newformofkernel}
    \normalfont
  \begin{split}
            \tilde{K}_{ij} = \begin{pmatrix}
            -\S^{r_i,r_i'}_{0,0} & D \S_{1,-1}^{l_i,r_i+r_i'-l_i} \\
            D^{-1}\S^{r_i+r_i'-l_i,l_i}_{-1,1} & -\S_{0,0}^{r_i',r_i}
        \end{pmatrix}
\begin{pmatrix}
W^* & -W^*H W\\
0 & W 
\end{pmatrix}\begin{pmatrix}
            \S^{r_j',r_j}_{0,0} & D \S_{1,-1}^{l_j,r_j+r_j'-l_j}\\
            D^{-1}\S^{r_j+r_j'-l_j,l_j}_{-1,1} & \S_{0,0}^{r_j,r_j'}
        \end{pmatrix}.
  \end{split}
    \end{equation}
\end{lemma}
The form \eqref{eqn:ktilde} is useful for proving the Kolmogorov equation and \eqref{eqn:newformofkernel} is useful for proving the initial condition. 

Now we explain what does the notation $\overline{\alphaptp^{-1}D\alphamtp^{-1}}$ mean in the kernel.
\begin{remark}\label{remark:bpartialb}
   We state precisely what we mean by $\normalfont D^{-1},\alphamtp^{-1},\alphaptp^{-1}$ in the kernel. $\linebreak D^{-1}f(x) = -\int_{x}^\infty f(t)dt$.
    For $\alpha>0,\alpha\neq 1/2$, $\normalfont \alphamtp^{-1},\alphaptp^{-1}$ are the notation for the following two integral kernels: \[\normalfont\alphamtp^{-1}(x,y) = \tfrac{1}{2}e^{(2\alpha-1)(x-y)/2}1_{x\leq y},\quad \alphaptp^{-1}(x,y) = \tfrac{1}{2}e^{(2\alpha-1)(y-x)/2}1_{y\leq x}.\]
    When $\alpha = 1/2$, \[ \normalfont \alphaptp^{-1}(x,y) =-1_{x<y}+1_{x\geq y}, \normalfont\quad \alphamtp^{-1}(x,y)  =1_{x<y}-1_{x\geq y}.\]  
    When $\alpha > 1/2$, the kernels have an exponential decay at infinity. When $\alpha < 1/2$, the kernels go to infinity at infinity, which is the non-physical Green's function. They can only act on functions with a faster decay. $\normalfont\ptp^{-1},\mtp^{-1}$ decay fast enough since $\tfrac{1}{2}>\frac{1-2\alpha}{2}$ when $\alpha>0$. Thus, there is no problem when $\normalfont\alphamtp^{-1},\alphaptp^{-1}$ are composing with $\normalfont \mtp^{-1},\ptp^{-1}$.
    
    When $\normalfont\alphamtp^{-1}$ is composing with $\normalfont\alphaptp^{-1}$, we explain what does it mean. $``\normalfont\alphamtp^{-1}\alphaptp^{-1}"$ should be thought of as the notation for one integral kernel, which we denote as $\normalfont\overline{\alphamtp^{-1}\alphaptp^{-1}}$, is defined as the following.  When $0<\alpha <1/2$,
    \begin{equation}\label{def:bstarbbar}
\normalfont\overline{\alphamtp^{-1}\alphaptp^{-1}}(x,z) =1_{x\geq z}\tfrac{1}{4(2\alpha-1)}e^{(1-2\alpha)(x-z)/2} +1_{x<z}\tfrac{1}{4(2\alpha-1)}e^{(1-2\alpha)(z-x)/2}.
    \end{equation}
    When $\alpha =1/2$, \[\normalfont\overline{\alphamtp^{-1}\alphaptp^{-1}}(x,z) = \tfrac{1}{4}((x-z)1_{x\geq z}+(z-x)1_{x<z}). \] 
    The reader might ask: What about the $D$ operator between them? One can think that $D$ commutes with $\normalfont\alphaptp^{-1},\alphamtp^{-1}$. Precisely, we define:
\begin{equation}\label{def:bpartialbdef}
    \normalfont\overline{\alphamtp^{-1} D \alphaptp^{-1}} := D \overline{ \alphamtp^{-1}\alphaptp^{-1}} =\overline{ \alphamtp^{-1}\alphaptp^{-1}}D.
\end{equation}
 More explicitly, when $0<\alpha <1/2$, 
    \[\normalfont\overline{\alphamtp^{-1} D \alphaptp^{-1}}(x,z) =-1_{x\geq z}\tfrac{1}{8}e^{(1-2\alpha)(x-z)/2} +1_{x<z}\tfrac{1}{8}e^{(1-2\alpha)(z-x)/2}.\]
 When $\alpha = 1/2$, \[\normalfont\overline{\alphamtp^{-1} D \alphaptp^{-1}}(x,z) = \tfrac{1}{4}(1_{x\geq z}-1_{x<z}). \] 
 
\end{remark}
 Notice that $\overline{\alphamtp^{-1}\alphaptp^{-1}}$ still behaves as the composition of $\normalfont\alphamtp^{-1}$ and $\normalfont\alphaptp^{-1}$, since it satisfies the following relations:
    \begin{lemma}\label{eqn:bbstarcancelrelation}
        \normalfont
        \begin{equation}
                \begin{split}
                \normalfont&\alphaptp \overline{(\alphaptp\alphamtp)^{-1}}=\alphamtp^{-1},\quad  \overline{(\alphaptp\alphamtp)^{-1}}\alphamtp=\alphaptp^{-1}.\\
        \end{split}
        \end{equation}
    \end{lemma}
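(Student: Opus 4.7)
The strategy is to verify both identities by direct substitution of the explicit kernels from Remark~\ref{remark:bpartialb}, splitting according to the analytic status of the object $\overline{(\alphaptp\alphamtp)^{-1}}$: the regime $1/2<\alpha<1$ where the individual inverses are exponentially decaying kernels, the regime $0<\alpha<1/2$ where they grow exponentially, and the boundary case $\alpha=1/2$.

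When $1/2<\alpha<1$, both $\alphaptp^{-1}$ and $\alphamtp^{-1}$ are integrable kernels, so $\alphamtp^{-1}\alphaptp^{-1}$ is an honest integral composition which one verifies by a short convolution of exponentials on the appropriate half-lines agrees with \eqref{eqn:bstarbbar}. Since $\alphaptp$ and $\alphamtp$ are constant-coefficient first-order differential operators they commute, so
$\alphaptp\,\alphamtp^{-1}\alphaptp^{-1}=\alphamtp^{-1}\,\alphaptp\alphaptp^{-1}=\alphamtp^{-1}$,
and the second identity is the mirror computation.

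For $0<\alpha<1/2$, I would apply $\alphaptp=(2\alpha-1)+2\partial_x$ directly to $G(x,z)=\tfrac{1}{4(2\alpha-1)}e^{(1-2\alpha)|x-z|/2}$. The crucial technical point is that $G$ is \emph{continuous} across $x=z$ (both branches reduce to $\tfrac{1}{4(2\alpha-1)}$ there), so the distributional derivative $\partial_x G$ is merely a piecewise-smooth locally integrable function with a jump but no delta contribution, and the computation reduces to elementary exponential algebra on each of $\{x>z\}$ and $\{x<z\}$. On $\{x>z\}$ the multiplicative $(2\alpha-1)$-term cancels the $2\partial_x$-term, giving $0$; on $\{x<z\}$ the two add up to $\tfrac12 e^{(2\alpha-1)(x-z)/2}$, which matches $\alphamtp^{-1}(x,z)=\tfrac12 e^{(2\alpha-1)(x-z)/2}\mathbf{1}_{x\leq z}$ exactly. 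The second identity then follows without separate computation, using the symmetry $G(x,z)=G(z,x)$ (since $|x-z|=|z-x|$) together with the direct observation from the definitions that $\alphamtp^{-1}(x,z)=\alphaptp^{-1}(z,x)$; so the second identity is simply the transpose of the first. The $\alpha=1/2$ case is structurally identical with $G(x,z)=|x-z|/4$ and $\alphaptp=2\partial_x$, and produces the matching formulas from Remark~\ref{remark:bpartialb} in the same way.

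No step is really a substantial obstacle. The single point requiring attention is the delta that would appear in $\alphaptp G$ if $G$ had a jump at $x=z$; inspection of \eqref{eqn:bstarbbar} confirms that it does not, which is precisely what makes the cancellation work cleanly and lets the identities hold as equalities of integral kernels.
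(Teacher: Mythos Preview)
The paper states this lemma without proof, so there is nothing to compare against directly. Your argument is correct: the explicit kernel $G(x,z)=\overline{(\alphaptp\alphamtp)^{-1}}(x,z)$ is continuous across $x=z$, so applying the first-order operator $\alphaptp$ produces no delta term and the piecewise exponential algebra gives exactly $\alphamtp^{-1}$; the second identity then follows by taking adjoints, using $G^*=G$, $\alphaptp^*=\alphamtp$, and $(\alphamtp^{-1})^*=\alphaptp^{-1}$, which is precisely your ``transpose'' observation.
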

    \begin{lemma}\label{lem:bbstarcommutes}
$\normalfont\overline{(\alphaptp\alphamtp)^{-1}}$ commutes with $\normalfont\mtp$ and $\normalfont\ptp$.
    \end{lemma}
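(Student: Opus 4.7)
The plan is to observe that the explicit kernel of $\overline{(\alphaptp\alphamtp)^{-1}}$ is translation invariant and then use this to compare compositions from the left and right.

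First I would note that in both cases $0<\alpha<1/2$ and $\alpha=1/2$ given in Remark \ref{remark:bpartialb}, the kernel $g(x,y):=\overline{\alphamtp^{-1}\alphaptp^{-1}}(x,y)$ depends on $x,y$ only through $|x-y|$. In particular $g(x,y)=G(x-y)$ for an even function $G$, so the operator acts as a (one-sided) convolution: $\bigl(\overline{(\alphaptp\alphamtp)^{-1}}f\bigr)(x)=\int G(x-y)f(y)\,dy$. In particular, $\partial_x g(x,y)=-\partial_y g(x,y)$.

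Next I would compute the kernels of $\mtp\,\overline{(\alphaptp\alphamtp)^{-1}}$ and $\overline{(\alphaptp\alphamtp)^{-1}}\mtp$ directly from the definitions. Since $\mtp=1-2\partial$, applying $\mtp$ from the left differentiates the $x$ variable, giving
\begin{equation*}
\bigl(\mtp\,\overline{(\alphaptp\alphamtp)^{-1}}\bigr)(x,y)=(1-2\partial_x)g(x,y).
\end{equation*}
Applying $\mtp$ from the right, one integrates by parts (the boundary terms vanish on the class of test functions considered, since $\mtp^{-1},\ptp^{-1}$ provide sufficient decay as noted in Remark \ref{remark:bpartialb}), which transfers the derivative to the $y$ variable of $g$ with a sign change:
\begin{equation*}
\bigl(\overline{(\alphaptp\alphamtp)^{-1}}\,\mtp\bigr)(x,y)=(1+2\partial_y)g(x,y).
\end{equation*}
The two are equal precisely because $\partial_xg=-\partial_yg$. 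The computation for $\ptp=1+2\partial$ is identical up to signs and also reduces to the same symmetry relation.

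A cleaner conceptual phrasing I would include as an alternative: the kernel $g$ being a function of $x-y$ means $\overline{(\alphaptp\alphamtp)^{-1}}$ is translation invariant, and every translation invariant kernel commutes with the constant-coefficient differential operators $\partial$, hence with $\mtp$ and $\ptp$. One can also view this as inverting the polynomial identity $\mtp(\alphaptp\alphamtp)=(\alphaptp\alphamtp)\mtp$ (both sides polynomials in $\partial$) and using Lemma \ref{eqn:bbstarcancelrelation} together with $(\alphaptp\alphamtp)\,\overline{(\alphaptp\alphamtp)^{-1}}=I$, which one verifies by a short direct computation checking that $((2\alpha-1)^2-4\partial_x^2)g(x,y)=\delta(x-y)$.

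The only real subtlety, rather than an obstacle, is that for $0<\alpha<1/2$ the kernel $g$ grows exponentially, so the compositions are only defined on sufficiently rapidly decaying functions; but within the kernel $K_{\mathrm{scalar}}$ the operator $\overline{(\alphaptp\alphamtp)^{-1}}$ is always sandwiched between factors of $\mtp^{\pm1},\ptp^{\pm1}$ and the indicators $\overline{1}^t$, which supply the decay, so the integration-by-parts step is justified in context.
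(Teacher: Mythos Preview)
Your argument is correct. The paper does not actually supply a proof of this lemma; it is stated immediately after Remark~\ref{remark:bpartialb} and left to the reader as an easy consequence of the explicit kernel formulas given there. Your observation that the kernel depends only on $x-y$ (indeed only on $|x-y|$), hence is translation invariant and commutes with the constant-coefficient operator $\partial$, is precisely the intended verification. The alternative phrasing via $((2\alpha-1)^2-4\partial_x^2)g(x,y)=\delta(x-y)$ is also fine and amounts to the same thing.

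One minor comment: your remark about integration by parts and decay is more care than is strictly needed here. Since $\mtp$ and $\ptp$ are first-order differential operators with constant coefficients, the identity $\partial_x g(x,y)=-\partial_y g(x,y)$ already gives the equality of kernels pointwise (in the distributional sense), without any appeal to the class of test functions; the growth issue for $0<\alpha<1/2$ only becomes relevant when $\overline{(\alphaptp\alphamtp)^{-1}}$ is composed with another \emph{integral} operator, not with $\mtp$ or $\ptp$.
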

Both lemmas are straightforward calculus calculations. 
\begin{remark}
     We will name
\begin{equation}\label{def:Vdef}
\normalfont
     \begin{split}
         \V = \alphaptp^{-1}\ptp^{r-l}W \ptp^{l-r}\alphaptp\\
     \end{split}
 \end{equation} then the kernel $\tilde{K}_{ij}$ is 
 \begin{equation}\label{eqn:kernelsuccinctform}
 \tilde{K}_{ij}=
     \begin{pmatrix}
            -\S^{l_i',r_i'}_{1,0} & D \S^{l_i',r_i'}_{1,0} \\
            D^{-1}\S^{r_i',l_i'}_{0,1} & -\S^{r_i',l_i'}_{0,1}
        \end{pmatrix}
\begin{pmatrix}
\V^* & -\V^*D\V\\
0 & \V
\end{pmatrix}\begin{pmatrix}
            \S^{r_j',l_j'}_{0,1} & D \S^{l_j',r_j'}_{1,0}\\
            D^{-1}\S^{r_j',l_j'}_{0,1}& \S^{l_j',r_j'}_{1,0}
        \end{pmatrix}.
 \end{equation}
This is a slight abuse of notation since one should interpret the term $V^*DV$ according to Remark \eqref{remark:bpartialb}. However, this makes the formula more succinct, and we will use it. It is illuminating to write the kernel explicitly once.  $\tilde{K}_{ij}=\begin{pmatrix}
M_{11}&M_{12}\\
M_{21}&M_{22}
\end{pmatrix}$, where
\begin{equation}
\begin{split}
        &M_{11}=-\S^{l_i',r_i'}_{1,0}\V^*\S^{r_j',l_j'}_{0,1}+ \S^{l_i',r_i'}_{1,0}\V^*D\V D^{-1}\S^{r_j',l_j'}_{0,1}+D \S^{l_i',r_i'}_{1,0}\V D^{-1}\S^{r_j',l_j'}_{0,1},\\
        &M_{12}= -\S^{l_i',r_i'}_{1,0}\V^*D \S^{l_j',r_j'}_{1,0}+\S^{l_i',r_i'}_{1,0}\V^*D\V\S^{l_j',r_j'}_{1,0}+D \S^{l_i',r_i'}_{1,0}\V\S^{l_j',r_j'}_{1,0},\\
        &M_{21}=D^{-1}\S^{r_i',l_i'}_{0,1}\V^*\S^{r_j',l_j'}_{0,1}-D^{-1}\S^{r_i',l_i'}_{0,1}\V^*D\V D^{-1}\S^{r_j',l_j'}_{0,1}- \S^{r_i',l_i'}_{0,1}\V D^{-1}\S^{r_j',l_j'}_{0,1}, \\
        &M_{22}=D^{-1}\S^{r_i',l_i'}_{0,1}\V^*D^{-1}\S^{r_j',l_j'}_{0,1}-D^{-1}\S^{r_i',l_i'}_{0,1}\V^*D\V\S^{l_j',r_j'}_{1,0}- \S^{r_i',l_i'}_{0,1}\V\S^{l_j',r_j'}_{1,0}.
\end{split} 
\end{equation}
Notice that each entry consists of terms in the form ``$SVS, SV^*S, SV^*DVS$''. We will state and prove most of the properties for the kernel ``$SVS$'', then separately discuss the proof for $\S \V^*D\V\S$.
\end{remark}
\begin{remark}\label{rmk:smoothindicator}
\normalfont
    For complete mathematical rigor, the $1_{x+y\geq 0}$ in the $\S_{a,b}^{i,j}(x,y)$  is interpreted as the limit of a sequence $\phi_{n}(x+y)$, which is a smooth approximations   of the indicator functions such that for any $\varepsilon>0$, for large enough $n$, $\phi_{n}(x)=1$ for $x>\varepsilon$ and $\phi(x)=0$ for $x<-\varepsilon$, and $\phi'(x)\to \delta(x)$ in the distributional sense. 
\end{remark}
It is not obvious that the kernel is a well-defined integral operator on $L^2[0,\infty)$. The main problem is that $W_1$ and $W_{n+1}$ contain differential operators, but the objects on both sides of the $W_1$ and $W_{n+1}$ are not differentiable since there exist some indicator functions. However, with Remark \eqref{rmk:smoothindicator} and the special structure of the kernel, we will prove that
\begin{proposition} \label{prop:welldefineandtraceclass}
    The kernel in \eqref{eqn:halfmultipoint} is well defined and is a trace-class operator on $\{1,\cdots,m\}\times L^2[0,\infty)$.
\end{proposition}
We will prove the theorem after we study some properties of the kernel in the next section.

\begin{remark}\label{rml:relationtobbcspaper}
    A final remark is about whether the formula can be reduced to the formula in \cite{BBCS16} under the initial condition that all sites are empty. Consider the one-point distribution $\mathbb{P}(h(t,y)\leq s) $. The kernel differs by one term. In our formula, the middle matrix in \eqref{eqn:ktilde} is
\begin{equation}
\normalfont
    \begin{pmatrix}
\alphamtp 1_0\alphamtp^{-1} & -\alphamtp 1_0\overline{\alphamtp^{-1}D\alphaptp^{-1}}1_0\alphaptp\\
0 & \alphaptp^{-1}1_0\alphaptp
\end{pmatrix}\\
\end{equation}
and in \cite{BBCS16}, their kernel is \eqref{eqn:ktilde} with the middle matrix being 
\begin{equation}
       \begin{pmatrix}
 1_0 & 0\\
0 & 1_0
\end{pmatrix}\\
\end{equation}
Although we cannot verify the determinant are equal with this modification, we check for few simplest cases that the determinant are indeed equal, by explicitly calculating the eigenvalues of the kernel.
\end{remark}
\subsection{Properties of the operators}\label{subsec:propertyofoperators}
The main purpose of this section is to establish that the kernel in \eqref{eqn:halfmultipoint} is well-defined. There are a few simple facts about the kernel. Recall \begin{equation}\label{eqn:Wformulaagain}
    \begin{split}
        &W= I-W_0\overline{1}^tW_{1,2}\overline{1}^tW_{2,3}\cdots\overline{1}^t W_{n-1,n}\overline{1}^tW_{n+1}.\\
&W_{i,i+1} = \mtp^{-u_i}\ptp^{-d_i}, \qquad W_0 = \mtp^{u},\qquad W_{n+1} = \ptp^{d}.
    \end{split}
\end{equation}

\begin{enumerate}

    \item There are the same numbers of $\ptp$ and $\ptp^{-1}$; $\mtp$ and $\mtp^{-1}$ in $W$, and they all commute. Thus, if all indicator functions $\overline{1}^t$ are not present, $W =0$.
    \item All differential operators ($\mtp,\ptp$ with positive powers) are present in $W_0$ and $W_{n+1}$. All integral operators ($\mtp,\ptp$ with negative powers) are in $W_{i,i+1}$. 
\end{enumerate}

We define some notation for some clips of $W$ operators for the convenience of discussion. Let
\begin{equation}\label{def:Wmodifydef}
    \begin{split}
        &\W_{i,j} = W_{i,i+1}\overline{1}^tW_{i+1,i+2}\cdots \W_{j-2,j-1} \overline{1}^t W_{j-1,j},\quad 0<i+1<j<n+1;\\
        &\W_{0,i} = \W_{0}\overline{1}^t\W_{1,2}\cdots \W_{i-1,i},\quad i< n;\\
        &W_{j,n+1} = \W_{j,j+1}\overline{1}^t\cdots \W_{n-1,n}\overline{1}^t\W_{n+1}, 0< j.
    \end{split}
\end{equation}
For $\mtp^{-1},\ptp^{-1}$, we have the following simple but useful lemma:
\begin{lemma}\label{lem:fullintegraladomain}
    \normalfont
    As an integral operator from $L^2(\mathbb{R})\to L^2(\mathbb{R})$, for $n\in \mathbb{Z}^+$, we have
    \begin{equation}\label{eqn:fullintegraladomain}
       \begin{split}
        \overline{1}^t\mtp^{-n}\overline{1}^t = \mtp^{-n}\overline{1}^t, \quad \overline{1}^t\ptp^{-n}\overline{1}^t  = \overline{1}^t\ptp^{-n},\\
        \overline{1}^t\mtp^{-n}\ket{\delta_t} = \mtp^{-n}\ket{\delta_t}, \quad \bra{\delta_t}\ptp^{-n}\overline{1}^t  = \bra{\delta_t}\ptp^{-n}.
       \end{split}
    \end{equation}
\end{lemma}
\begin{proof}
    \sloppy From the integral kernel representation for $\mtp^{-1}$ in \eqref{def:aastarinversedef},
    $\mtp^{-1}(x,y) = \frac{1}{2}e^{(x-y)/2}1_{x\leq y}$. Thus, for each $n\in \mathbb{Z}^+$, $\mtp^{-n}$  is also an integral operator and its kernel $\mtp^{-n}(x,y)$ is supported on $x\leq y$, i.e., $\mtp^{-n}(x,y) = f(x,y)\cdot 1_{x\leq y}$, where $f(x,y)$ is a smooth function. Similarly, $\ptp^{-n}(x,y)$ is supported on $x\geq y$. Thus, (\ref{eqn:fullintegraladomain}) followed by the support of the operators.
\end{proof}
For differential operators $\mtp,\mtp_*$, they are local operators in the sense that their support is at a single point as a distribution. Formally, that is equivalent to say that $\delta(x-y)$ is supported on $|x-y|<\varepsilon$ for arbitrary small $\varepsilon$. Thus, we have the following relations.
\begin{lemma}\label{lem:fulldifferentialdomain}
\normalfont
For any $t_1\neq t_2$, let $\phi_n(x)$ be a smooth approximation to the indicator function defined in Remark \eqref{rmk:smoothindicator}. The following objects:
\begin{equation}
\begin{split}
 \bra{\delta_{t_1}}\mtp 1_{t_2}, \quad \bra{\delta_{t_1}}\ptp 1_{t_2}
\end{split}
\end{equation}
make sense as functionals on differentiable functions, defined by:
\begin{equation}\label{def:differentialonindicatordef}
     \begin{split}
         \bra{\delta_{t_1}}\mtp 1_{t_2} := \lim_{n\to \infty}\bra{\delta_{t_1}}\mtp \phi_n^{t_2},\quad \bra{\delta_{t_1}}\ptp 1_{t_2}:=\lim_{n\to \infty}\bra{\delta_{t_1}}\ptp \phi_n^{t_2}.
     \end{split}
 \end{equation} 
Furthermore, for $t_1 < t_2$, we have
\begin{equation*}
\begin{split}
 \bra{\delta_{t_1}}\mtp 1_{t_2}=0, \quad \bra{\delta_{t_1}}\ptp 1_{t_2}=0.
\end{split}
\end{equation*}
\end{lemma}
\begin{proof}
    Let $f$ be a differentiable function. By definition,
    \begin{equation*}
        \bra{\delta_{t_1}}\mtp 1_{t_2}f = \lim_{n\to \infty}\bra{\delta_{t_1}}\mtp (\phi^{t_2}_n f ) = \begin{cases}
            (\mtp f)(t_1) \text{ if } t_2<  t_1,\\
            0 \text{ if } t_2> t_1.
        \end{cases}
    \end{equation*}
    If $t_2<t_1$, the Dirac delta function $\delta_{t_1}$ does not see the jump in $(\phi^{t_2}_n f )$. If $t_2>t_1$, $(\phi^{t_2}_n f )$ will be $0$ for large enough $n$.
\end{proof}
\begin{remark}
   By our definition of $\phi_n(x)$, $\phi'_n(x)$ has compact support. Thus, the limit in \eqref{def:differentialonindicatordef} is eventually constant. Thus, it is a formalism to define `how to differentiate the indicator function when essentially you do not care about the jump', rather than an approximation.
\end{remark}
Throughout the paper, when there is a distribution acting on a cut-off of a smooth function, it is interpreted as the limit of the approximated sequence. Notice that for differential operators, we only use Lemma \eqref{lem:fulldifferentialdomain} when the two endpoints $t_1$ and $t_2$ are different; thus, the fact is independent of how the smooth functions $\phi_n$ are chosen. We never encounter $1_t \mtp_*\ket{\delta_{t}},1_t \mtp\ket{\delta_{t}}$.
Lemma \eqref{lem:fullintegraladomain} and Lemma \eqref{lem:fulldifferentialdomain} are the two key facts that we will use frequently.  Using these two facts, we also have numerous facts of the same type:
    
\begin{lemma}\label{lem:dropindicator}(Support of the operator)
    Let $\normalfont\texttt{x}$ be $\normalfont\mtp$ or $\normalfont\alphamtp$. Assume $t_1<t_2$, $k,u,d>0$; we have
    \begin{enumerate}[label=\roman*.]
        \item $\normalfont\overline{1}^{t_1}\texttt{x}_*^{-k}\texttt{x}^{u}\overline{1}^{t_2} = \overline{1}^{t_1}\texttt{x}_*^{-k}\texttt{x}^{u},\quad \bra{\delta_{t_1}}\texttt{x}_*^{-k}\texttt{x}^{u}\overline{1}^{t_2} = \bra{\delta_{t_1}}\texttt{x}_*^{-k}\texttt{x}^{u}$.
        \item $\normalfont\overline{1}^{t_2}\texttt{x}_*^d\overline{1}^{t_1} = \texttt{x}_*^{d}\overline{1}^{t_1},\quad \normalfont\overline{1}^{t_2}\texttt{x}_*^d\Ket{\delta_{t_1}} = \texttt{x}_*^{d}\Ket{\delta_{t_1}}$.
        \item $\normalfont\overline{1}^{t_1}\texttt{x}_*^{-k}\texttt{x}^{u}1_{t_2} = 0,\quad \bra{\delta_{t_1}}\texttt{x}_*^{-k}\texttt{x}^{u}1_{t_2} = 0$.
        \item $\normalfont1_{t_2}\texttt{x}_*^d\overline{1}^{t_1} = 0,\quad \normalfont 1_{t_2}\texttt{x}_*^d\Ket{\delta_{t_1}} = 0$.
    \end{enumerate}
\end{lemma}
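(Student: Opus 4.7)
The plan is to reduce every part of the lemma to a single support statement about the integral/differential kernels involved, together with the strict inequality $t_1<t_2$. The explicit Green's functions in Remark \ref{remark:bpartialb} show that $\texttt{x}_*^{-1}$ (either $\ptp^{-1}$ or $\alphaptp^{-1}$) has an integral kernel supported on $\{(x,y):y\le x\}$, so by composition $\texttt{x}_*^{-k}(x,y)$ is supported on $y\le x$ for every $k\ge 1$. On the other hand $\texttt{x}$ and $\texttt{x}_*$ are first-order differential operators, so any positive power $\texttt{x}^u$ or $\texttt{x}_*^d$ is a local operator whose distributional kernel is supported on the diagonal $\{x=y\}$. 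Combining these, the kernel of $\texttt{x}_*^{-k}\texttt{x}^{u}$ is supported on $\{y\le x\}$, and the kernel of $\texttt{x}_*^{d}$ is supported on $\{x=y\}$ (modulo boundary distributions, discussed below).

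Given these facts, each of (i)--(iv) is immediate. For (i), the kernel of $\overline{1}^{t_1}\texttt{x}_*^{-k}\texttt{x}^{u}\overline{1}^{t_2}$ is $1_{x\le t_1}(\texttt{x}_*^{-k}\texttt{x}^{u})(x,y)1_{y\le t_2}$, which is supported on $\{y\le x\le t_1\}$; since $t_1<t_2$, the factor $1_{y\le t_2}$ is automatically $1$ and can be dropped. The $\bra{\delta_{t_1}}$ version is the same statement evaluated at $x=t_1$. For (iii), on the same support $\{y\le x\le t_1<t_2\}$ the indicator $1_{y>t_2}$ vanishes identically, giving zero. For (ii), since $\texttt{x}_*^{d}\overline{1}^{t_1}$ has kernel supported on $\{x=y\le t_1\}$ (plus boundary terms at $y=t_1$ coming from differentiating the indicator), multiplying on the left by $1_{x\le t_2}$ is trivial because $x\le t_1<t_2$. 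For (iv), the same support is disjoint from $\{x>t_2\}$, so $1_{t_2}\texttt{x}_*^d\overline{1}^{t_1}=0$. The delta versions are handled identically by setting $y=t_1$ in place of integrating against $\overline{1}^{t_1}$.

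The one subtlety, and the only place where care is needed, is that the differential operators $\texttt{x}_*^d$ or $\texttt{x}^u$ applied to a sharply-cutoff object such as $\overline{1}^{t_1}$ or $\ket{\delta_{t_1}}$ produce Dirac distributions concentrated at $y=t_1$. These boundary contributions are harmless here because in every identity the endpoint of the opposing indicator is strictly separated from $t_1$: it is $t_2>t_1$. So whenever a boundary $\delta_{t_1}$ is produced by a differential operator, the outer indicator $\overline{1}^{t_2}$, $1_{t_2}$ (or the integration in $\texttt{x}_*^{-k}$ restricted to $\{z\le x\le t_1<t_2\}$) is evaluated at a point strictly less than $t_2$, so these distributions either are retained unchanged (cases (i),(ii)) or are killed by the strict inequality (cases (iii),(iv)). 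This is precisely the mechanism announced after \eqref{eqn:differentialoperatordomain}, and the lemma is simply the systematic bookkeeping of it for the sequences of operators used throughout the rest of the paper.
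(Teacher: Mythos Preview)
Your proof is correct and follows the same approach as the paper: both arguments reduce every item to the support properties $\texttt{x}_*^{-1}(x,y)$ supported on $\{y\le x\}$ and $\texttt{x},\texttt{x}_*$ local, then use the strict separation $t_1<t_2$ to conclude. Your write-up is more explicit than the paper's (which essentially just states the support facts and says the identities follow), in particular in tracking the boundary $\delta_{t_1}$ contributions from the differential operators, but this is elaboration rather than a different method.
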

There is also a similar lemma for operators that involve $\mtp^{-1}(x,y),\alphamtp^{-1}(x,y)$, which are supported on $x\leq y$. 
Lemma \eqref{lem:dropindicator} is used for both 
\begin{itemize}
    \item Drop an indicator on one side when it is inherited from the operator itself; or    \item Reduce the operator to zero when the restriction contradicts the operator's support.
\end{itemize} 
\begin{proof}
      The proof is the same for all of them, which is about the domain of the operators. If you have an operator $A(x,y)$ that is supported on $x\geq y$, if $x\leq t$, then the variable $y$ is also supported on $y\leq t$. Notice that $\mtp_*^{-1}(x,y)$ is supported on $x\geq y$; $\mtp(x,y),\ptp(x,y)$ is supported on $|x- y|<\varepsilon$ for some arbitrarily small $\varepsilon$, because differential operators are local operators. All of the above statements follow by observing the domain of the operator.
\end{proof}

Now we want to do some equivalent transformations on the kernel. The manipulation of the kernel involves switching the order of the differential operators in $W_0$ and $W_{n+1}$ and the indicator functions, which brings commutators. There are two types of movement. The operators in $W_0,W_{n+1}$ can switch with $\overline{1}^t$ and cancel with the terms in $\overline{1}^tW_{1,2}\overline{1}^t\cdots \overline{1}^tW_{n-1,n}\overline{1}^t$; the operators in $\W_0,\W_{n+1}$ can act on the operator $\S$. In this subsection, we discuss the first type. The commutators for switching $\mtp$ and $\overline{1}^t$ are
\begin{equation}\label{eqn:commutator}
    \begin{split}
        &[\mtp,\overline{1}^t] = 2\Ket{\delta_t}\Bra{\delta_t}, \quad [\overline{1}^t,\mtp_*]  = 2\Ket{\delta_t}\Bra{\delta_t},\\
        &[\mtp^{-1},\overline{1}^t] = 2 \mtp^{-1}\ket{\delta_t}\Bra{\delta_t}\mtp^{-1}, \quad [\overline{1}^t,\mtp_*^{-1}]=2 \mtp_*^{-1}\ket{\delta_t}\Bra{\delta_t}\mtp_*^{-1}.
    \end{split}
\end{equation}

 We define the following object $\W_{1,n}^{i,j},i\leq u, j\leq d$ to simplify the notation. Using word description, this is the notation for `` the operator one gets when $i$ numbers of $\mtp$ and $j$ numbers of $\ptp$ on the two sides of $\W_{1,n}$ go through all the indicator functions''. To define it rigorously, let $p$ be the smallest natural number such that $i \leq u_1+u_2+\cdots u_p$, and let $q$ be the largest natural number such that $j \leq d_q+d_{q+1}+\cdots d_n$, then if $p+1<q-1$,
\begin{equation}\label{def:Wij1ndef}
        \begin{split}
       \W_{1,n}^{i,j} = \mtp^{-(u_1+\cdots u_p)+i}\ptp^{-(u_1+\cdots u_p)}\overline{1}^tW_{p+1,q-1}\overline{1}^t \mtp^{-(d_q+d_{q+1}+\cdots d_n)}\ptp^{-(d_q+d_{q+1}+\cdots d_n)+j}. 
\end{split}
\end{equation}
If $p = q-1$,
\[\W_{1,n}^{i,j} = \mtp^{-(u_1+\cdots u_p)+i}\ptp^{-(u_1+\cdots u_p)}\overline{1}^t \mtp^{-(d_q+d_{q+1}+\cdots d_n)}\ptp^{-(d_q+d_{q+1}+\cdots d_n)+j}.\]
If $p>q-1$,
\[\W_{1,n}^{i,j} = \mtp^{-u+i}\ptp^{-d+j}.\]
    
Here are some examples to help us understand. Let us take $\W_{1,3} = \mtp^{-2}\ptp^{-2}\overline{1}^t\mtp^{-2}\ptp^{-2}\overline{1}^t\mtp^{-2}\ptp^{-2}$.
    \begin{equation*}
        \begin{split}
            &W^{1,2}_{1,3} =  \mtp^{-1}\ptp^{-2}\overline{1}^t\mtp^{-2}\ptp^{-2}\overline{1}^t\mtp^{-2} \text{(one $\mtp$ in the front cancels, two $\ptp$ in the end cancel).}\\
            &W^{2,3}_{1,3}  =\ptp^{-2}\overline{1}^t\mtp^{-4}\ptp^{-1}\text{(two $\mtp$ in the front cancels, three $\ptp$ in the end cancel)}.\\
            &W_{1,3}^{3,3} = \mtp^{-3}\ptp^{-3} \text{(three $\mtp$ in the front cancels, three $\ptp$ in the end cancel)}.
        \end{split}
    \end{equation*}
    The only important fact about $W_{1,n}^{i,j}$ is that they are all integral operators from $L^2(\mathbb{R}) \to L^2(\mathbb{R})$. 
\begin{proposition}\label{prop:Wequiv}
    \normalfont
    For $n>1$,
    \begin{equation}\label{eqn:Wequiv}
        \begin{split}
            W &= 1_t + \sum^u_{i=1}\sum_{j=1}^d 4\mtp^{u-i}\ket{\delta_t}\bra{\delta_t}\W_{1,n}^{i-1,j-1}\ket{\delta_t}\bra{\delta_t} \ptp^{d-j}.
        \end{split}
    \end{equation}
    They are equal as an operator from $\mathcal{S}(\mathbb{R}) \to \mathcal{S}'(\mathbb{R})$.
    
  \end{proposition}
    \begin{proof}
        We first move all $\mtp$ across the first indicator function. Using \eqref{eqn:commutator}, we have 
        \begin{equation}\label{eqn:local1}
            \begin{split}
                \W =I-2\sum_{i=1}^u \mtp^{u-i}\ket{\delta_t}\bra{\delta_t}\W_{1,n}^{i-1,0}\overline{1}^t\ptp^{d}-\overline{1}^t\W_{1,n}^{u,0}\overline{1}^t\ptp^{d}.
            \end{split}
        \end{equation}
        The third term is what one gets after switching $W_0$ and $\overline{1}^t$, and the second term is where all the commutators appear during the switching.

        Now we switch all $\ptp^d$ to the left of $\overline{1}^t$:
        \begin{equation}
            \begin{split}
                \eqref{eqn:local1} &=  I - \sum^u_{i=1}\sum_{j=1}^d 4\mtp^{u-i}\ket{\delta_t}\bra{\delta_t}\W_{1,n}^{i-1,j-1}\ket{\delta_t}\bra{\delta_t} \ptp^{d-j}-2\sum_{i=1}^u \mtp^{u-i}\ket{\delta_t}\bra{\delta_t}\W_{1,n}^{i-1,d}\overline{1}^t\\
                &-2\sum_{j=1}^d \overline{1}^t\W_{1,n}^{u,j-1}\ket{\delta_t}\bra{\delta_t}\ptp^{d-j}-\overline{1}^t\W_{1,n}^{u,d}\overline{1}^t =: \circled{1}+ \circled{2}+ \circled{3}+ \circled{4}+ \circled{5}.
            \end{split}
        \end{equation}
        
        Notice that \circled{2} is the second term we want in \eqref{eqn:Wequiv}. \circled{1} + \circled{5} is $1_t$, since $\W_{1,n}^{u,d}$ is the term for which all the $\mtp^{-1}, \ptp^{-1}$ in $\W_{1,n}$ are canceled. Lastly, we want to show that $\circled{3},\circled{4}$ are $0$. Notice that $d$ is the total number of $\ptp^{-1}$ in $\W_{1,n}$, therefore, by \eqref{def:Wij1ndef}, $\W_{1,n}^{i-1,d} = \mtp^{u-i+1}$, thus each term in the summation $\circled{3}$ is $2\mtp^{u-i}\ket{\delta_t}\bra{\delta_t}\mtp^{u-i+1}\overline{1}^t$, which is $0$ by Lemma (\ref{lem:dropindicator} \romannumeral 3). Similarly, in $\circled{4}$,  $\overline{1}^t\W_{1,n}^{u,j-1}\ket{\delta_t}\bra{\delta_t}\ptp^{d-j} =\overline{1}^t\ptp^{d-j+1}\ket{\delta_t}\bra{\delta_t}\ptp^{d-j}=0$. Thus, the statement is proved.

    \end{proof}
    Now we show that the equality is also true as an operator on the space on which $W$ acts.
    \begin{corollary}
    \normalfont
    Let $\tilde{S}= \{\S_{a,b}^{i,j}f | f\in L^2[0,\infty)\}$
    , the equality 
    \begin{equation}
        \begin{split}
            W &= 1_t + \sum^u_{i=1}\sum_{j=1}^d 4\mtp^{u-i}\ket{\delta_t}\bra{\delta_t}\W_{1,n}^{i-1,j-1}\ket{\delta_t}\bra{\delta_t} \ptp^{d-j}
        \end{split}
    \end{equation}
    is true as an operator from $\tilde{S}\to \tilde{S}'$. Here, $\tilde{S}'$ means the space of linear functionals on the space $\tilde{S}$. In particular, this shows that the kernel we have in \eqref{eqn:newformofkernel} is well defined.
    \end{corollary}
    \begin{proof}

    Since the index on $\S^{i,j}_{a,b}$ does not matter in the proof, we simply write $\S$ in the proof. Look at the summation term on the right-hand side. $\bra{\delta_t}\W_{1,n}^{i-1,j-1}\ket{\delta_t}$ is a scalar value; thus, the terms in the summation are in the form of $ c_{ij} \mtp^{u-i}\Ket{\delta_t}\Bra{\delta_t}\ptp^{d-j}$, which are rank-one operators. 

    Let $f\in L^2[0,\infty)$, $\S f = \lim_{n\to\infty} \S_{\phi_n} f$, where $\phi_n$ is the smooth approximation we define in \eqref{rmk:smoothindicator}. When $W$ acts on $Sf$, each term in the summation is in the form of
    \begin{equation}
            \begin{split}
            \bra{\delta_t}\ptp^{d-j}\S f =\lim_{n\to \infty} \bra{\delta_t}\ptp^{d-j}\S_{\phi_n} f.
    \end{split}
    \end{equation}
    Recall $\S_{\phi_n} f = \int_0^\infty dy \s(x,y)\phi_n(x+y)f(y)$, which is the smooth approximation to
    \begin{equation*}
        1_{x\geq0}\int_0^\infty \s(x,y)f(y)dy+1_{x<0}\int_{-x}^\infty\s(x,y)f(y)dy.
    \end{equation*}
    Same as the discussion in Lemma \eqref{lem:fulldifferentialdomain}, since $t>0$,
    \begin{equation}
        \lim_{n\to \infty} \bra{\delta_t}\ptp^{d-j}\S_{\phi_n}f = \bigg(\ptp^{d-j}\int_0^\infty \s(\cdot,y)f(y)dy\bigg)\Bigg|_{x=t}
    \end{equation}
    Notice that the limit is eventually constant due to the compact support of $\phi'_n(x)$.
   Thus, the equality in \eqref{eqn:Wequiv} makes sense as an operator on $\tilde{S}$.

   The previous argument basically says ``$\W$ operator surrounded by $S$ is a well-defined operator on $L^2[0,\infty)$. Now, to check the operator defined in \eqref{eqn:newformofkernel}, notice that there is only one type of operator that we did not discuss, which is in the form of $\S \W^*\mtp^{r-l}\overline{\alphamtp^{-1}D\alphaptp^{-1}}\ptp^{r-l}W\S$. It is easy to see that this operator is also well defined since $l-r\geq u$, thus we have enough integral operators in the middle to be acted upon by $\mtp^u$ and $\ptp^u$.
\end{proof}

This finishes the proof of the well-definedness part in Proposition \eqref{prop:welldefineandtraceclass}. We still need to show that it is a trace class operator to make sure the Fredholm determinant is well defined. For that, we will prove a stronger result later in Section \eqref{sec:tracenorm}. 
\subsection{Kolmogorov equation}\label{sec:kolm}
\subsubsection{Arguments overview}\label{subsec:kolm}
In this section, we will prove
\begin{proposition}\label{prop:kolm}
    Let $\Pf(J+JK)$ be the object defined in \eqref{eqn:halfmultipoint}. Let $\mathcal{L}$ be the generator of the half-space TASEP with parameter $\alpha$. Then,
    \begin{equation*}
        (\partial_t-\mathcal{L})\Pf(J+JK)=0.
    \end{equation*}
\end{proposition}

We first set up some notations which will be convenient for the discussion. Recall that 
\begin{equation*}
    \begin{split}
        V&=\ptp^{r-l}\alphaptp^{-1}W \ptp^{l-r}\alphaptp\\
        &=\ptp^{r-l}\alphaptp^{-1}(I-W_0\overline{1}^tW_{1,2}\overline{1}^tW_{2,3}\cdots\overline{1}^t W_{n-1,n}\overline{1}^tW_{n+1}) \ptp^{l-r}\alphaptp.
    \end{split}
\end{equation*}
we define $\V_0 =\ptp^{r-l}\alphaptp^{-1}W_0, \V_{n+1} = W_{n+1}\ptp^{l-r}\alphaptp$. Similarly to \eqref{def:Wmodifydef}, we define
\begin{equation}\label{def:Vmodifydef}
    \begin{split}
        \V_{0,i} = \V_0\overline{1}^t\W_{1,i}\text{ for } 1<i\leq n,\\
        \V_{i,n+1} =  \W_{i,n}\overline{1}^t\V_{n+1},\enspace \text{ for } 1 \leq i < n.
    \end{split}
\end{equation}

Notice that all $t$ variables  appear in the indicator functions in $V$, and there are exactly $n$ of them; thus we let $\partial_{t,i}V$ be the result that the partial operator hits the $i$-th indicator function $\overline{1}^t$. So, $\partial_t V = \sum_{i}\partial_{t,i}V$. Recall that
\begin{equation*}
    \mathcal{L}V = \sum_{i=1}^n \bigg(1+(\alpha-1) 1_{x_i=0}\bigg)(V^{\downarrow x_i}-V),
\end{equation*}
where $V^{\downarrow x_i}$ is the kernel $V$ parameterized by the configuration obtained from a flip at $x_i$ from $(x_1,h_1;\dots,x_n,h_n)$. We define
\begin{equation}\label{def:lxidef}
    \mathcal{L}_{x_i} V:=\begin{cases}
        V^{\downarrow x_i}-V \text{ if } x_i\neq0;\\
        \alpha(V^{\downarrow x_i}-V) \text{ if } x_i = 0,
    \end{cases}
\end{equation}
so that $\mathcal{L}V = \sum_{i=1}^n \mathcal{L}_{x_i}V$.
Recalling the form of $\tilde{K}_{ij}$ in \eqref{eqn:kernelsuccinctform}, we rewrite $\tilde{K}_{ij}$ as
\begin{equation}\label{eqn:tildeKeffectivestructurekol}
    \begin{pmatrix}
        \mathcal{S}_i & -D\mathcal{S}_i
    \end{pmatrix}\begin{pmatrix}
\V^* & -\V^*D\V\\
0 & \V
\end{pmatrix}
\begin{pmatrix}
    -D\mathcal{S}^*_jJ \\
    -\mathcal{S}^*_jJ
\end{pmatrix},
\end{equation}
where $\mathcal{S}_i=\begin{pmatrix}
            -\S^{l_i',r_i'}_{1,0}  \\
            D^{-1}\S^{r_i',l_i'}_{0,1}
        \end{pmatrix}$ and $J=\begin{pmatrix}
    0 &1 \\
    -1 &0
\end{pmatrix}$. Notice one relation on $\mathcal{S}_i$ which we will use is
\begin{equation}\label{eqn:fdminusds}
    \mathcal{S}_iD = -D\mathcal{S}_i.
\end{equation}
The kernel in the form \eqref{eqn:tildeKeffectivestructurekol} contains all the structures and information we need to prove the Kolmogorov equation.

Now, we are ready to discuss the proof of Proposition \eqref{prop:kolm}, with the help of some lemmas, which we will prove in the next section.
\begin{proof}(Proof of Prop \eqref{prop:kolm})  
First, we need the following lemma. 
\begin{lemma}\label{lem:halfkolondet}
    \begin{equation*}
    (\partial_t-\mathcal{L})\Pf(J+JK) = \frac{1}{2}\sqrt{\det(I+K)}(\partial_t-\mathcal{L})K.
\end{equation*}
\end{lemma}

Thus, all we need to show is $(\partial_t-\mathcal{L})K=0$. To calculate it, observe that in \eqref{eqn:kernelsuccinctform}, all variables $t$ and initial configuration information are in matrix
\begin{equation*}
\begin{pmatrix}
    V^* & -V^*DV\\
    0 & V
\end{pmatrix}=
    \begin{pmatrix}
\mtp^{l-r}\alphamtp W^*\mtp^{r-l}\alphamtp^{-1} & -\mtp^{l-r}\alphamtp W^*\mtp^{r-l}\overline{\alphamtp^{-1}D\alphaptp^{-1}}\ptp^{r-l} W\\
0 & \ptp^{r-l}\alphaptp^{-1}W \ptp^{l-r}\alphaptp
\end{pmatrix},
\end{equation*}
and all the rest of the kernel remain unchanged under the action of the generator. Thus, it boils down to investigate
\begin{equation*}
    (\partial_t-\mathcal{L})\begin{pmatrix}
        V^* & -V^*DV\\
        0 & V
    \end{pmatrix},
\end{equation*}
which are in the following three lemmas.
\begin{lemma}\label{lem:halflleiibniz}
    For $x_i\geq 0$,        \begin{equation}\label{eqn:lleiibniz}
            \mathcal{L}_{x_i} (\V^*D \V) = (\mathcal{L}_{x_i}\V^*)D \V+\V^*D  (\mathcal{L}_{x_i}\V).
        \end{equation}
\end{lemma}This lemma says that the operator $\mathcal{L}_{x_i}$ is actually Leibniz. 

The next lemma is about what is $(\partial_{t,i}-\mathcal{L}_{x_i})V$.
\begin{lemma}\label{lem:halfkolonV}
     
    \begin{equation}\label{eqn:wflip0term}
    \normalfont
       (\partial_{t,i}-\mathcal{L}_{x_i})\V = \begin{cases}
       0,\quad x_i\neq 0;\\
    -\alphamtp\ptp^{-1}\V_0\ket{\delta_t}\bra{\delta_t} \V_{1,n+1},\quad x_i=0.
   \end{cases}
   \end{equation}
 
\end{lemma}
Further, we have the lemma about what is $(\partial_{t,i}-\mathcal{L}_{x_i})(\V^*D\V)$.
\begin{lemma}\label{lem:halfVstarDVreduce}
For any $\alpha>0$, when $x_1=0$,
    \begin{equation}
        ((\partial_{t,1}-\mathcal{L}_{x_1})\V^*)D \V+\V^*D (\partial_{t,1}-\mathcal{L}_{x_1})\V = ((\partial_{t,1}-\mathcal{L}_{x_1})\V^*)D +D(\partial_{t,1}-\mathcal{L}_{x_1})\V.
    \end{equation}
\end{lemma}
One should think this lemma is the key mechanism for the half-space formula.

Now, using Lemma \eqref{lem:halflleiibniz}, \eqref{lem:halfkolonV} and \eqref{lem:halfVstarDVreduce}, we show 
    \begin{equation}\label{eqn:localkol119}
        \begin{split}
            \begin{pmatrix}
        \mathcal{S}_i & -D\mathcal{S}_i
    \end{pmatrix}
\begin{pmatrix}
    (\partial_t-\mathcal{L})V^* & (\partial_t-\mathcal{L})(-V^*DV)\\
    0 & (\partial_t-\mathcal{L})V
\end{pmatrix}\begin{pmatrix}
            -D\mathcal{S}^*_jJ\\
            -\mathcal{S}^*_jJ
        \end{pmatrix}=0,
        \end{split}
    \end{equation}
    which finishes the proof of \eqref{prop:kolm}.
    
    By Lemma \eqref{lem:halflleiibniz}, the middle matrix is 
    \begin{equation*}
        \begin{pmatrix}
    (\partial_t-\mathcal{L})V^* & -(\partial_t-\mathcal{L})V^*DV-V^*D(\partial_t-\mathcal{L})V\\
    0 & (\partial_t-\mathcal{L})V
\end{pmatrix}.
    \end{equation*}
    By Lemma \eqref{lem:halfkolonV}, we only need to consider the component $\partial_{t,1}-\mathcal{L}_{x_1}$ when $x_1 = 0$. Using Lemma \eqref{lem:halfVstarDVreduce}, we can represent the matrix as  
     $\begin{pmatrix}
        M_{11} & -M_{11}D-D M_{22}\\
        0 & M_{22}
    \end{pmatrix}$ 
    where $M_{11}=(\partial_{t,1}-\mathcal{L}_{x_1})V^*,M_{22}=(\partial_{t,1}-\mathcal{L}_{x_1})V$.

    \begin{equation*}
        \begin{split}
&\eqref{eqn:localkol119}=\begin{pmatrix}
        \mathcal{S}_i & -D\mathcal{S}_i
    \end{pmatrix}
\begin{pmatrix}
        M_{11} & -M_{11}D-DM_{22}\\
        0 & M_{22}
    \end{pmatrix}\begin{pmatrix}
            -D\mathcal{S}^*_jJ\\
            -\mathcal{S}^*_jJ
        \end{pmatrix}\\
        &=\mathcal{S}_iM_{11}(-D\mathcal{S}^*_jJ) -\mathcal{S}_iM_{11}D(-\mathcal{S}^*_jJ)-\mathcal{S}_iDM_{22}(-\mathcal{S}^*_jJ)-D\mathcal{S}_iM_{22}(-\mathcal{S}^*_jJ).
        \end{split}
        \end{equation*}
        
        Using the equation \eqref{eqn:fdminusds}: $D\mathcal{S}_i =-\mathcal{S}_i D$, we can see that the result is $0$, which is what we want to prove.

\end{proof}
\subsubsection{Proof of lemmas}
\begin{proof}(Proof of Lemma \eqref{lem:halfkolonV})
$\partial_{t,i}$ acting on $V$ changes the $i$-th indicator function in $V$ to $\ket{\delta_t}\bra{\delta_t}$. For $\mathcal{L}_{x_i}V$, recalling its definition in \eqref{def:lxidef}, we want to compute the difference of $V$ after a flip at $x_i$ and the original $V$. Depending on different types of peaks, we need to discuss them in cases.

We first consider the case $x_i\neq 0$.

 \textbf{Type-1:}\begin{figure}[!h]
\centering
\begin{tikzpicture}[scale=0.7]
\draw[thick, ->] (-3,-1) -- (3,-1) node[right] {$x$};
\draw[thick, ->] (-2,-2) -- (-2,2) node[above] {$y$};
 \node[shift={(0,0.2)}] at (0,0) {$(x_i,h_i)$};
 \node at (2.5,-2) {$\cdots$};
  \node at (-2.5,-2) {$\cdots$};
  \draw (-2,-2) -- (0,0) -- (2,-2);
  \draw[->, thick] (4,-1.5) -- node[above] {} (5,-1.5);
  \node at (0,-3) {Before flip};

  \begin{scope}[xshift=8cm]
    \draw[thick, ->] (-3,-1) -- (3,-1) node[right] {$x$};
     \node[shift={(-0.1,0.5)}] at (-1,-1) {\footnotesize$(x_i,h_i)$};
     \node[shift={(0.5,0.5)}] at (1,-1) {\footnotesize$(x_{i+1},h_{i+1})$};
\draw[thick, ->] (-2,-2) -- (-2,2) node[above] {$y$};
    \draw (-2,-2) -- (-1,-1) --(0,-2) -- (1,-1)--(2,-2);
    \draw[dashed] (-1,-1) -- (0,0) -- (1,-1);
     \node at (2.5,-2) {$\cdots$};
  \node at (-2.5,-2) {$\cdots$};
    \draw[->] (0,-0.4) -- (0,-1.6) node[midway, right] {};
     \node at (0,-3) {After flip};
  \end{scope}

\end{tikzpicture}
\end{figure}\\
Assume that the configuration before the flip has a kernel: $\W_{0,i}\overline{1}^t \W_{i,n+1}$, then the configuration after the flip at $x_i$ has a kernel $\W_{0,i}\mtp \overline{1}^t\mtp^{-1}\ptp^{-1}\overline{1}^t \ptp\W_{i,n+1}$. Taking the difference, we have the following.
\begin{equation*}
\begin{split}
&\W_{0,i}(\mtp\overline{1}^{t}\mtp^{-1}\ptp^{-1}\overline{1}^t\ptp-\overline{1}^t)\W_{i,n+1}\\
&=\W_{0,i}\bigg(2\overline{1}^t\ptp^{-1}\Ket{\delta_t}\Bra{\delta_t}+2\Ket{\delta_t}\Bra{\delta_t}\mtp^{-1}\overline{1}^t+4\Ket{\delta_t}\Bra{\delta_t}\mtp^{-1}\ptp^{-1}\Ket{\delta_t}\Bra{\delta_t}\bigg)\W_{i,n+1}.
\end{split}
\end{equation*}
The three terms in the bracket are the commutator terms for switching $\mtp, \ptp$ with the indicators.
By (Lemma \ref{lem:dropindicator}\romannumeral 4), $\overline{1}^t\ptp^{-1}\Ket{\delta_t}=0,\Bra{\delta_t}\mtp^{-1}\overline{1}^t=0$, so the first two terms in the bracket are $0$. The third term is $\ket{\delta_t}\bra{\delta_t}$, since  $\Bra{\delta_t}\mtp^{-1}\ptp^{-1}\Ket{\delta_t}=\frac{1}{4}$ by explicit kernel calculation.

\textbf{Type-2:}\begin{figure}[!htbp]
\centering
\begin{tikzpicture}[scale=0.7]
 \draw (-2,-2) -- (0,0) -- (1,-1)--(2,0)--(3,1);
  \draw[->, thick] (4,-1.5) -- node[above] {} (5,-1.5);
   \node[shift={(0,0.2)}] at (0,0) {$(x_i,h_i)$};
  \draw[thick, ->] (-3,-1) -- (3,-1) node[right] {$x$};
\draw[thick, ->] (-2,-2) -- (-2,2) node[above] {$y$};
\node at (-2.5,-2.5) {$\cdots$};
\node at (3.5,1.25) {$\cdots$};
  \begin{scope}[xshift=8cm]
    \draw[thick, ->] (-3,-1) -- (3,-1) node[right] {$x$};
\draw[thick, ->] (-2,-2) -- (-2,2) node[above] {$y$};
 \node[shift={(0,0.5)}] at (-1,-1) {$(x_i,h_i)$};
    \draw (-2,-2) -- (-1,-1) --(0,-2) -- (1,-1)--(2,0)--(3,1);
    \draw[dashed] (-1,-1) -- (0,0) -- (1,-1);
    \node at (-2.5,-2.5) {$\cdots$};
\node at (3.5,1.25) {$\cdots$};
    \draw[->] (0,-0.4) -- (0,-1.6) node[midway, right] {}; 
  
  \end{scope}
\end{tikzpicture}
\end{figure}

This corresponds to $\W_{0,i}\overline{1}^t\W_{i,n+1} \to  \W_{0,i}\mtp\overline{1}^t\mtp^{-1}\W_{i,n+1}.$
We take the difference and get
\begin{equation}\label{eqn:42}
\begin{split}
\W_{0,i}\big(\mtp \overline{1}^t\mtp^{-1}-\overline{1}^t\big)\W_{i,n+1} = \W_{0,i}\big(2\Ket{\delta_t}\Bra{\delta_t}\mtp^{-1}\big)\W_{i,n+1}.
\end{split}
\end{equation}
We show the following special fact: \begin{equation}\label{eqn:43}
    2\Ket{\delta_t}\Bra{\delta_t}\mtp^{-1}W_{i,i+1} \overline{1}^t=\Ket{\delta_t}\Bra{\delta_t}W_{i,i+1}\overline{1}^t .
\end{equation} Note that in the graph on the left, there is only one down step from peak $x_i$ to $x_{i+1}$, which means $d_i=1$, so $
W_{i,i+1}\overline{1}^t = \ptp^{-1}\mtp^{-u_i}\overline{1}^t = \ptp^{-1}\overline{1}^t\mtp^{-u_i}\overline{1}^t$.  Then take the difference of the terms in \eqref{eqn:43}, we have
\begin{equation}\label{eqn:45}
\begin{split} &2\Ket{\delta_t}\Bra{\delta_t}\mtp^{-1}\ptp^{-1}\overline{1}^t\mtp^{-u_i}\overline{1}^t-\Ket{\delta_t}\Bra{\delta_t}\ptp^{-1}\overline{1}^t \mtp^{-u_i}\overline{1}^t\\
&= \Ket{\delta_t}\Bra{\delta_t}(2\mtp^{-1}-1)\ptp^{-1}\overline{1}^t \mtp^{-u_i}\overline{1}^t. \\
&= \Ket{\delta_t}\Bra{\delta_t}\mtp^{-1}\overline{1}^t \mtp^{-u_i}\overline{1}^t=0
\end{split}  
\end{equation}
The second equality is by Lemma \eqref{lem:dropindicator}. Thus, \eqref{eqn:42} is $\W_{0,i}\Ket{\delta_t}\Bra{\delta_t}\W_{i,n+1}$, which is what we want.

\textbf{Type-3:}

\begin{figure}[H]
\centering
\begin{tikzpicture}[scale=0.7]
\draw[thick, ->] (-3,-1) -- (3,-1) node[right] {$x$};
\draw[thick, ->] (-2,-2) -- (-2,2) node[above] {$y$};
 \draw (-3,1) -- (-2,0) -- (-1,-1)-- (0,0) -- (2,-2);
  \node at (-3.5,1.25) {$\cdots$};
   \node at (2.5,-2.25) {$\cdots$};
  \node[shift={(0,0.2)}] at (0,0) {$(x_i,h_i)$};
  \draw[->, thick] (4,-1.5) -- node[above] {} (5,-1.5);
  \begin{scope}[xshift=8cm]
    \draw[thick, ->] (-3,-1) -- (3,-1) node[right] {$x$};
     \node[shift={(0,0.5)}] at (1,-1) {$(x_i,h_i)$};
\draw[thick, ->] (-2,-2) -- (-2,2) node[above] {$y$};
    \draw (-3,1) -- (-2,0) -- (-1,-1)-- (0,-2)--(1,-1) -- (2,-2);
    \draw[dashed] (-1,-1) -- (0,0) -- (1,-1);
      \node at (-3.5,1.25) {$\cdots$};
   \node at (2.5,-2.25) {$\cdots$};
    \draw[->] (0,-0.4) -- (0,-1.6) node[midway, right] {}; 
  
  \end{scope}
\end{tikzpicture}
\end{figure}
The proof for this type is very similar to the type-2 case, thus omitted.

\textbf{Type-4:}
\begin{figure}[H]
\centering
\begin{tikzpicture}[scale=0.7]
\draw[thick, ->] (-3,-1) -- (3,-1) node[right] {$x$};
\draw[thick, ->] (-2,-2) -- (-2,2) node[above] {$y$};
 \draw (-3,1) -- (-2,0) -- (-1,-1)-- (0,0) -- (1,-1)--(3,1);
  \draw[->, thick] (4,-1.5) -- node[above] {} (5,-1.5);
  \node[shift={(0,0.2)}] at (0,0) {$(x_i,h_i)$};
  \begin{scope}[xshift=8cm]
    \draw[thick, ->] (-3,-1) -- (3,-1) node[right] {$x$};
\draw[thick, ->] (-2,-2) -- (-2,2) node[above] {$y$};
    \draw (-3,1)  -- (0,-2) --(3,1);
    \draw[dashed] (-1,-1) -- (0,0) -- (1,-1);
    \draw[->] (0,-0.4) -- (0,-1.6) node[midway, right] {}; 
  
  \end{scope}
\end{tikzpicture}
\end{figure}
In this case,  $\W_{0,i}\overline{1}^t\W_{i,n+1}\to   \W_{0,i}\W_{i,n+1}$. The difference is
$\W_{0,i}1_t\W_{i,n+1}$. We will show that it is equal to $\W_{0,i}\Ket{\delta_t}\Bra{\delta_t}\W_{i,n+1}$. Due to the special structure that there is only one up step on the left and one down step on the right, we have $W_{i-1,i}=\mtp^{-1}\ptp^{-d_{i-1}}$ and $W_{i,i+1}=\ptp^{-1}\mtp^{-u_{i}}$. We can write 
\begin{equation*}
\overline{1}^t\W_{i-1,i} = \overline{1}^t \ptp^{-d_{i-1}}\overline{1}^t\mtp^{-1}, \quad \W_{i,i+1}\overline{1}^t=\ptp^{-1}\overline{1}^t\mtp^{-u_i}\overline{1}^t.
\end{equation*}
Then $\overline{1}^tW_{i-1,i}1_tW_{i,i+1}\overline{1}^t = \overline{1}^t \ptp^{-d_{i-1}}\overline{1}^t\mtp^{-1}1_t\ptp^{-1}\overline{1}^t\mtp^{-u_i}\overline{1}^t$. Using the integral kernel definition of $\mtp^{-1},\ptp^{-1}$, we have
\begin{equation}\label{eqn:50}
\begin{split}
(\overline{1}^t\mtp^{-1}1_t\ptp^{-1}\overline{1}^t)(x,z) &= 1_{x\leq t,z\leq t}\int_t^\infty dy \tfrac{1}{4}1_{x \leq y}e^{\tfrac{1}{2}(x-y)} 1_{z \leq y}e^{\tfrac{1}{2}(z-y)}\\
&=1_{x\leq t,z\leq t}\tfrac{1}{4}e^{-t+\tfrac{1}{2}x+\tfrac{1}{2}z}.
\end{split}
\end{equation}
The indicator function $1_{x\leq y}$ and $1_{z\leq y}$ in the integration can be dropped since we know $y>t$ and $x\leq t,z\leq t$.
Now notice that \eqref{eqn:50} is the integral kernel of the rank one operator $\mtp^{-1}\Ket{\delta_t}\Bra{\delta_t}\ptp^{-1}$, because $(\mtp^{-1}\Ket{\delta_t})(x) = 1_{x\leq t}\tfrac{1}{2}e^{-t/2+x/2}$ and $(\Bra{\delta_t}\ptp^{-1})(z)=1_{z\leq t}\tfrac{1}{2}e^{-t/2+z/2}$.

The last thing we need to check is the flip at $x_1$ and $x_n$. If $x_1\neq 0$, the proof for both arguments is very similar to the previous arguments, so we omit it.

Now we investigate a flip at $x_1 = 0$. There are two types.

\textbf{Type-1}:
\begin{figure}[H]
\centering
\resizebox{0.8\textwidth}{!}{
\begin{tikzpicture}
\draw[thick, ->] (-3,-1) -- (3,-1) node[right] {$x$};
 \node[shift={(0.8,0)}] at (-2,1) {$(x_1,h_1)$};
\draw[thick, ->] (-2,-2) -- (-2,2) node[above] {$y$};
\node at (3,0.25) {$\cdots$};
 \draw (-2,1) -- (-1,0) -- (0,-1) --(1,-2)-- (2,-1)--(3,0);
  \draw[->, thick] (4,-1.5) -- node[above] {} (5,-1.5);
  \begin{scope}[xshift=8cm]
    \draw[thick, ->] (-3,-1) -- (3,-1) node[right] {$x$};
\draw[thick, ->] (-2,-2) -- (-2,2) node[above] {$y$};
 \node[shift={(0.8,0)}] at (-1,0) {$(x_1,h_1)$};
 \node at (3,0.25) {$\cdots$};
    \draw (-2,-1) -- (-1,0) -- (0,-1) --(1,-2)-- (2,-1)--(3,0);
    \draw[dashed] (-2,1) -- (-1,0);
    \draw[->] (-1.8,0.5) -- (-1.8,-0.2) node[midway, right] {}; 
  
  \end{scope}
\end{tikzpicture}}
\end{figure}
This corresponds to the kernel change: \[    \V_0\overline{1}^tW_{1,2}\cdots \overline{1}^t\V_{n+1}     \to  \ptp^{-1}\V_0\overline{1}^t \ptp W_{1,2}\cdots \V_{n+1}.\] By switching $\ptp$ with $\overline{1}^t$ and commuting with $\V_0$, it cancels $\ptp^{-1}$ in the front, so their difference is the commutator term \[2\ptp^{-1}\V_0\ket{\delta_t}\bra{\delta_t} W_{1,2}\cdots \V_{n+1}.\]
So, $(\partial_{t,1}-\alpha \mathcal{L}_{x_1=0})V = -\alphamtp\ptp^{-1}\V_0\ket{\delta_t}\bra{\delta_t} W_{1,2}\cdots \V_{n+1}$, which is what we want.

\textbf{Type-2}:
\begin{figure}[H]
\begin{tikzpicture}
 \draw (-2,1) -- (-1,0) -- (0,1) --(1,2)-- (2,1);
  \draw[->, thick] (3,1.5) -- node[above] {} (4,1.5);
  \draw[thick, ->] (-3,-1) -- (2,-1) node[right] {$x$};
  \node at (2,0.75) {$\cdots$};
   \node[shift={(0.8,0)}] at (-2,1) {$(x_1,h_1)$};
\draw[thick, ->] (-2,-2) -- (-2,2) node[above] {$y$};
  \begin{scope}[xshift=8cm]
    \draw[thick, ->] (-3,-1) -- (2,-1) node[right] {$x$};
\draw[thick, ->] (-2,-2) -- (-2,2) node[above] {$y$};
\node[shift={(0.8,0)}] at (1,2) {$(x_1,h_1)$};
    \draw (-2,-1) -- (-1,0) -- (0,1) --(1,2)-- (2,1);
     \node at (2,0.75) {$\cdots$};
    \draw[dashed] (-2,1) -- (-1,0);
    \draw[->] (-1.8,0.5) -- (-1.8,-0.4) node[midway, right] {}; 
  
  \end{scope}
\end{tikzpicture}
\end{figure}
In this case, we have the following kernel change:
\[\V_0\overline{1}^t\W_{1,2}\overline{1}^t \V_{2,n+1} \to \\V_0W_{1,2}\overline{1}^t\V_{2,n+1}.\]
This kernel transformation is not that straightforward. Recall $W_{1,2} = \ptp^{-1}\mtp^{-u_1}, \V_0 = \ptp^{r-l}\alphaptp\mtp^{u}$. After the flip at the origin, note that the distance from \emph{the primordial peak} to the first peak decreases by $u_1$. For any cone $C_{p,q}$, the distance from \emph{the primordial peak} to the left side of the cone $C_{p,q}$ remains unchanged; the distance from \emph{the primordial peak} to the right side of the cone $C_{p,q}$ decreases by $1$. By removing the indicator function $\overline{1}^t$ between $\V_0$ and $W_{1,2}$, we have $\V_0W_{1,2}=\ptp^{r-l-1}\mtp^{u-u_1}\alphaptp$. This is exactly the new ``$\V_0$" term that corresponds to the configuration on the right graph.

Now we compute \[\V_0W_{1,2}\overline{1}^t\V_{2,n+1}-\V_0\overline{1}^tW_{1,2}\overline{1}^t\V_{2,n+1}=\V_01_tW_{1,2}\overline{1}^t\V_{2,n+1}.\]
Notice that there is only one $\ptp^{-1}$ in $W_{1,2}$. By switching the order of $\ptp^{-1}$ in $W_{1,2}$ with $1_t$, we have
\begin{equation*}
    \begin{split}
\V_01_tW_{1,2}\overline{1}^t\V_{2,n+1}
&=2\ptp^{-1}\V_0\Ket{\delta_t}\bra{\delta_t}W_{1,2}\overline{1}^t\V_{2,n+1}+\V_0\ptp^{-1}1_t\mtp^{-u_1}\overline{1}^t \V_{2,n+1}.
    \end{split}
\end{equation*}
The second term is $0$ since $1_t \mtp^{-u_1}\overline{1}^t = 0$,
 thus we have the desired result.
 \end{proof}
\begin{proof}(Proof of Lemma \eqref{lem:halflleiibniz})
        By definition,\begin{equation}\label{eqn:ldefinition}
        \begin{split}
            &\mathcal{L}_{x_i} (\V^*D \V) =( \V^*D  \V)^{\downarrow x_i}-( \V^*D  \V)\\
            &= (\V^*)^{\downarrow x_i}D \V^{\downarrow x_i}- (\V^*)^{\downarrow x_i}D \V+ (\V^*)^{\downarrow x_i}D \V-\V^*D \V.
        \end{split}
    \end{equation}
    The last two terms are $ (\mathcal{L}_{x_i}\V^*) D  \V$. The first two terms are $ (\V^*)^{\downarrow x_i} D  (\mathcal{L}_{x_i}\V)$. So the difference between equation \eqref{eqn:lleiibniz} and equation \eqref{eqn:ldefinition}  is
    \begin{equation}\label{eqn:58}
        \begin{split}
          & \V^* D  (\mathcal{L}_{x_i}\V)- (\V^*)^{\downarrow x_i} D  (\mathcal{L}_{x_i}\V) = (\mathcal{L}_{x_i}\V^*) D  (\mathcal{L}_{x_i}\V).
        \end{split}
    \end{equation} Recall from the proof of Lemma \eqref{lem:halfkolonV} that $\mathcal{L}_{x_i}\V = \V^{\downarrow x_i}-\V$ is a rank-one operator for all $x_i$, so we write it as $\Ket{h}\Bra{g}$ for some $h$ and $g$. Then we have $(\mathcal{L}_{x_i}\V^*) = (\mathcal{L}_{x_i}\V)^* = \Ket{g}\bra{h}$. So we conclude that \[\eqref{eqn:58}=\Ket{g}\bra{h}D \Ket{h}\bra{g} =0,\] since $D$ is an antisymmetric operator, which shows that $\mathcal{L}_{x_i}$ is also Leibniz.
    \end{proof}

\begin{proof}(Proof of Lemma \eqref{lem:halfVstarDVreduce})
    We first discuss the case $\alpha>1/2$.
         Looking at the term $-\V^*_{1,n+1}\ket{\delta_t}\bra{\delta_t} \V_0^*\alphaptp\mtp^{-1}D\V$, writing out what is $\V$, we have
    \begin{equation}\label{eqn:60}
         -\V^*_{1,n+1}\ket{\delta_t}\bra{\delta_t} \V^*_0\alphaptp\mtp^{-1}D \V = -\V^*_{0,n}\ket{\delta_t}\bra{\delta_t} \V^*_0\alphaptp\mtp^{-1}D (I-\V_0\overline{1}^t\V_{1,n+1}).
    \end{equation}
    \emph{Notice one key fact: when the first peak is at $0$, we have $l-r = u$}. So, \[\V_0^* = \mtp^{-u}\ptp^{u}\alphamtp^{-1},\quad \V_0 = \ptp^{-u}\mtp^u\alphaptp^{-1}.\]
   Plugging into equation \eqref{eqn:60}, we claim that the part containing $\V_0\overline{1}^t\V_{1,n+1}$ is $0$, because 
   \begin{equation*}
       \begin{split}
&\V^*_{0,n}\ket{\delta_t}\bra{\delta_t} \V^*_0\alphaptp\mtp^{-1}D \V_0\overline{1}^t\V_{1,n+1} \\
=&\V^*_{0,n}\ket{\delta_t}\bra{\delta_t}\mtp^{-u}\ptp^{u}\alphamtp^{-1}\alphaptp\mtp^{-1}D\ptp^{-u}\mtp^u\alphaptp^{-1}\overline{1}^t\V_{1,n+1}\\
=&\V^*_{0,n}\ket{\delta_t}\bra{\delta_t}\alphamtp^{-1}\mtp^{-1}D \overline{1}^t\V_{1,n+1}.
       \end{split}
   \end{equation*}
   The second equality is true because all operators in the middle commute. By Lemma \eqref{lem:dropindicator}, the integral operator $(\alphamtp^{-1}\mtp^{-1}D )(x,y)$ is supported on $y\geq x$, thus $\bra{\delta_t}\alphamtp^{-1}\mtp^{-1}D \overline{1}^t=0$.
   So equation \eqref{eqn:60} equals \[-\V^*_{0,n}\ket{\delta_t}\bra{\delta_t} \V^*_0\alphaptp\mtp^{-1}D.\]
which is exactly the same as $(\partial_{t,1}-\mathcal{L}_{x_1})\V^* D$ when $x_1=0$. Similarly, the term $-\V^*D\alphamtp\ptp^{-1}\V_0\ket{\delta_t}\bra{\delta_t} \V_{1,n+1}$ is 
\begin{equation}\label{eqn:62}
    \begin{split}
         -\V^* D \alphamtp\ptp^{-1}\V_0\ket{\delta_t}\bra{\delta_t}  \V_{1,n+1} &= -(I- \V^*_{1,n+1}\overline{1}^t\V^*_0) D \alphamtp\ptp^{-1}\V_0\ket{\delta_t}\bra{\delta_t}  \V_{1,n+1}\\
        &=- D\alphamtp\ptp^{-1}\V_0\ket{\delta_t}\bra{\delta_t}  \V_{1,n+1},
    \end{split}
\end{equation}
which is $D(\partial_{t,1}-\mathcal{L}_{x_1})V$.

 For $0<\alpha<1/2$, we need to check the term $\overline{\alphamtp^{-1}D\alphaptp^{-1}}$ carefully.
 \begin{equation}\label{eqn:local135kol}
        \begin{split}
            &(\partial_{t,1}-\mathcal{L}_{x_1})( \V^* \partial  \V) =\\ &-\alphamtp(\V^*_{1,n+1})^\circ\ket{\delta_t}\bra{\delta_t} (\V^*_0)^\circ\alphaptp\mtp^{-1}\overline{\alphamtp^{-1}D\alphaptp^{-1}} (\V)^\circ\alphaptp
            \\&- \alphamtp(\V^*)^\circ \overline{\alphamtp^{-1}D\alphaptp^{-1}}  \alphamtp\ptp^{-1}(\V_0)^\circ\ket{\delta_t}\bra{\delta_t} (\V_{1,n+1})^\circ\alphaptp.
        \end{split}
    \end{equation}
    $\V^{\circ},(\V^*)^{\circ}, \V_0^{\circ},(\V^*_0)^{\circ},(\V_{1,n+1})^{\circ},(\V_{1,n+1}^*)^{\circ}$ all means that operator $\alphaptp,\alphamtp$ and their inverses are pulled out from their original definition. In particular,
    \[(\V_0^*)^\circ = \mtp^{-u}\ptp^u,\quad \V_0^\circ = \ptp^{-u}\mtp^{u}.\]
    Now we write $\V^\circ$ and $(\V^*)^\circ$ as in the case $\alpha>1/2$. $\V^\circ = (I-\V_0^{\circ}\overline{1}^t\V_{1,n+1}), (\V^*)^\circ = I-\V_{1,n+1}^*\overline{1}^t(\V^*_0)^{\circ}$. Look at the first term in \eqref{eqn:local135kol}, which is
    \begin{equation}\label{eqn:local136kol}
            -\alphamtp(\V^*_{1,n+1})^\circ\ket{\delta_t}\bra{\delta_t} (\V^*_0)^\circ\alphaptp\mtp^{-1}\overline{\alphamtp^{-1}D\alphaptp^{-1}} (I-\V_0^{\circ}\overline{1}^t\V_{1,n+1}).
    \end{equation}
    We first show the term containing $\V_0^\circ \overline{1}^t\V_{1,n+1}$ is $0$, that is to show
    \[-\alphamtp(\V^*_{1,n+1})^\circ\ket{\delta_t}\bra{\delta_t} (\V^*_0)^\circ\alphaptp\mtp^{-1}\overline{\alphamtp^{-1}D\alphaptp^{-1}} \V_0^{\circ}\overline{1}^t\V_{1,n+1}=0.\]
    Notice that $(\V_0^*)^\circ$ cancels the term $\V_0^\circ$ since they all commute with $\overline{\alphamtp^{-1}D\alphaptp^{-1}}$, what is left is
    \[-\alphamtp(\V^*_{1,n+1})^\circ\ket{\delta_t}\bra{\delta_t} \alphaptp\mtp^{-1}\overline{\alphamtp^{-1}D\alphaptp^{-1}} \overline{1}^t\V_{1,n+1}.\]
    Looking at $\alphaptp\mtp^{-1}\overline{\alphamtp^{-1}D\alphaptp^{-1}}$,
    \begin{equation}\label{eqn:65}
        \begin{split}
\alphaptp\mtp^{-1}\overline{\alphamtp^{-1}D\alphaptp^{-1}} &=\alphaptp\mtp^{-1}D\overline{(\alphaptp\alphamtp)^{-1}} =\mtp^{-1}D\alphaptp\overline{(\alphaptp\alphamtp)^{-1}}=\mtp^{-1}D\alphamtp^{-1}.
        \end{split}
    \end{equation}
    The first equality is according to our definition, the second equality is according to commutativity, and the third equality is according to Lemma \eqref{eqn:bbstarcancelrelation}. Then the term is $0$ due to the support of the operator. Notice that this is the reason we want to define $\alphaptp^{-1},\alphamtp^{-1}$ as the non-physical Greens function. It maintains the same domain as in $\alpha>1/2$.
    Thus, 
    \begin{equation}
            \begin{split}
            \eqref{eqn:local136kol} &= -\alphamtp(\V^*_{1,n+1})^\circ\ket{\delta_t}\bra{\delta_t} (\V^*_0)^\circ\alphaptp\mtp^{-1}\overline{\alphamtp^{-1}D\alphaptp^{-1}}\\
            & =-\alphamtp(\V^*_{1,n+1})^\circ\ket{\delta_t}\bra{\delta_t} (\V^*_0)^\circ\mtp^{-1}\alphamtp^{-1}D \\
            &=(\partial_{t,1}-\mathcal{L}_{x_1})\V^* D, \quad x_1=0.
    \end{split}
    \end{equation}

    For the second line in \eqref{eqn:local135kol}, using the same proof, we can see that it is $-D(\partial_{t,1}-\mathcal{L}_{x_1})\V$. This finishes the proof of the lemma in the case $0<\alpha<1/2$.

    Lastly, we check the case $\alpha=1/2$. All the properties used in \eqref{eqn:65} are still valid for $\alpha = 1/2$. Notice that the definition of $\alphamtp^{-1}$ is that it is the integral operator with kernel $\alphamtp^{-1}(x,z) = 1_{x<z}-1_{x\geq z}$, so $D \alphamtp^{-1}=-2I$. Thus, $\mtp^{-1}D \alphamtp^{-1} = -2\mtp^{-1}$, so $\bra{\delta_t}\mtp^{-1}D \alphamtp^{-1}\overline{1}^t=0$. This shows that the proof in the case $0<\alpha<1/2$ will all go through; thus, the lemma also works in the case $\alpha =1/2$.

\end{proof}

\begin{proof}(Proof of Lemma \eqref{lem:halfkolondet})
    It is a well-known identity that if the kernel depends smoothly on a parameter $t$, then the partial derivative of the Fredholm determinant is
 \begin{equation}\label{eqn:local68partialt}
     \begin{split}
         \partial_t\sqrt{\det(I-K)}& = \frac{1}{2}\sqrt{\det(I-K)}\tr(I-K)^{-1}\partial_tK\\
         &=\sum_{i=1}^n\frac{1}{2}\sqrt{\det(I-K)}\tr(I-K)^{-1}\partial_{t,i}K.
     \end{split}
 \end{equation}
 Now we check how $\mathcal{L}_{x_k}$ acts on the square root of the determinant. By Lemma \eqref{lem:halfkolonV}, $\mathcal{L}_{x_k}\V$ is a rank-one operator for all $x_k\geq 0$. Let us denote 
 \begin{equation*}
     \mathcal{L}_{x_k}\V = \Ket{f_k}\bra{g_k},\quad \mathcal{L}_{x_k}\V^* = \Ket{g_k}\bra{f_k}
 \end{equation*}
 for some function $f_k,g_k$. 
 Recall that $\mathcal{L}_{x_k}K_{ij}$ is \begin{equation}\label{eqn:localkol69}
        \begin{split}
            \begin{pmatrix}
        \mathcal{S}_i & -D\mathcal{S}_i
    \end{pmatrix}
\begin{pmatrix}
    \Ket{g_k}\bra{f_k} &- \Ket{g_k}\bra{f_k}DV -V^*D\Ket{f_k}\bra{g_k}\\
    0 & \Ket{f_k}\bra{g_k}
\end{pmatrix}\begin{pmatrix}
            -D\mathcal{S}^*_jJ\\
            -\mathcal{S}^*_jJ
        \end{pmatrix}.
        \end{split}
    \end{equation} 
Define
\begin{equation}
    \mathcal{G}_k^i =\begin{pmatrix}
        \mathcal{S}_i & -D\mathcal{S}_i
    \end{pmatrix} \begin{pmatrix}
        \ket{g_k} & -V^*D\ket{f_k}\\
        0 & \ket{f_k}
    \end{pmatrix},\mathcal{F}_k^j= 
    \begin{pmatrix}
        \bra{f_k} & -\bra{f_k}DV\\
        0 & \bra{g_k}
    \end{pmatrix}\begin{pmatrix}
            -D\mathcal{S}^*_jJ\\
            -\mathcal{S}^*_jJ
        \end{pmatrix}.
\end{equation}
Then 
\begin{equation}
    \eqref{eqn:localkol69} = \mathcal{G}_k^i\mathcal{F}_k^j.
\end{equation}
Define the following row vector and column vector:
\begin{equation*}
\mathcal{G}_k = \begin{pmatrix}
        \mathcal{G}^1_k\\
        \mathcal{G}^2_k\\
        \cdots\\
        \mathcal{G}^m_k
    \end{pmatrix}, \quad 
    \mathcal{F}_k = \begin{pmatrix}
        \mathcal{F}_k^1, \mathcal{F}_k^2,\cdots, \mathcal{F}_k^m
    \end{pmatrix},
\end{equation*}
then $\mathcal{L}_{x_i}K$ is $\mathcal{G}_k\mathcal{F}_k$. Since $\mathcal{L}_{x_i}K = \partial_{t,i}K$, \eqref{eqn:local68partialt} can be written as
\begin{equation}\label{eqn:local69}
    \sum_{i=1}^n\frac{1}{2}\sqrt{\det(I-K)}\tr(I-K)^{-1}\mathcal{G}_k\mathcal{F}_k.
\end{equation}

Notice that $\mathcal{G}_k$ is a $m \times 2$
matrix; thus, we write $\mathcal{G}_k = \begin{pmatrix}
     G^1_k & G^2_k
 \end{pmatrix}$, which denotes the two columns of $\mathcal{G}_k$; similarly, we write $\mathcal{F}_k=\begin{pmatrix}
     F^1_k\\ F^2_k
 \end{pmatrix}$. It is straightforward to check the relation that 
 \begin{equation}\label{eqn:local72relation}
     (F_k^1)^* = JG_k^2, \quad (F_k^2)^*= JG_k^1.
 \end{equation}Now, compute 
\begin{equation}\label{eqn:local74}
    \begin{split}
        \mathcal{L}_{x_i}\sqrt{\det(I+K)} &= \sqrt{\det(I+K^{\downarrow x_i})}-\sqrt{\det(I+K)}\\
        &=\sqrt{\det(I+K+\mathcal{G}_k\mathcal{F}_k)}-\sqrt{\det(I+K)}.
    \end{split}
\end{equation}
Using the fact that
\begin{equation}\label{eqn:local73}
    \begin{split}
        \det(I+K+\mathcal{G}_k\mathcal{F}_k)&=\det(I+K)\det(I+(I+K)^{-1}\mathcal{G}_k\mathcal{F}_k)\\
        &=\det(I+K)\det(I+\mathcal{F}_k(I+K)^{-1}\mathcal{G}_k).
    \end{split}
\end{equation}
The second determinant is a $2\times 2$ normal determinant:
\begin{equation*}
    \det\begin{pmatrix}
        1+F_k^1(I+K)^{-1}G_k^1 & F_k^1(I+K)^{-1}G_k^2\\
        F_k^2(I+K)^{-1}G_k^1 & 1+F_k^2(I+K)^{-1}G_k^2
    \end{pmatrix}.
\end{equation*}
Using the fact that $((I+K)^{-1})^* = -J(I+K)^{-1}J$ and \eqref{eqn:local72relation}, we can see that both $F_k^1(I+K)^{-1}G_k^2$ and $F_k^2(I+K)^{-1}G_k^1$ are antisymmetric. Because both objects are scalars, they are $0$. Further, we can see that $(F_k^1(I+K)^{-1}G_k^1)^*=F_k^2(I+K)^{-1}G_k^2$, and since they are both scalars, the diagonal values are the same. 

So,
\begin{equation*}
    \begin{split}
        \eqref{eqn:local73} = \det(I+K) \bigg(1+\frac{1}{2}(F_k^1(I+K)^{-1}G_k^1+F_k^2(I+K)^{-1}G_k^2)\bigg)^2.
    \end{split}
\end{equation*}
So,
\begin{equation*}
    \begin{split}
        \eqref{eqn:local74} &= \frac{1}{2}\sqrt{\det(I+K)}(F_k^1(I+K)^{-1}G_k^1+F_k^2(I+K)^{-1}G_k^2)\\
        &=\frac{1}{2}\sqrt{\det(I+K)}\tr (I+K)^{-1}\mathcal{F}_k\mathcal{G}_k.
    \end{split}
\end{equation*}
The second equality is due to the fact that the trace of operator $\begin{pmatrix}
    \ket{f_1}\bra{g_1} & M_{21}\\
    M_{12} & \ket{f_2}\bra{g_2}
\end{pmatrix}$ is equal to $\bra{g_1}\ket{f_1}+\bra{g_2}\ket{f_2}$.
Now comparing it with \eqref{eqn:local69}, we see that this is exactly $\partial_{t,i}\sqrt{\det(I+K)}$, which finishes the proof.
\end{proof}

\subsection{Initial condition}\label{subsec:initialcondition}
\subsubsection{Arguments overview}
In this subsection, we will prove that our formula has the correct initial condition. More precisely, we show
\begin{proposition}\label{prop:initialcondition}
Given the initial condition $(\vec{x},\vec{h})$, let $h(x)$ be the height function of the half-space TASEP associated with $(\vec{x},\vec{h})$. Let $F(t,h) = \mathbb{P}( (\vec{x},\vec{h})_t\leq \{\vec{y},\vec{s}\})$. Then \[\lim_{t\to 0}F(t,h) = 1_{(\vec{x},\vec{h})\leq \{\vec{y},\vec{s}\}} = \Pi_{i=1}^m 1_{h(y_i)\leq s_i}.\]
\end{proposition}
We will first state two lemmas, which will be proved in the following sections. Then, with the help of all the lemmas, we prove this proposition.

The first lemma is about how the kernel reduces when there are ineffective peaks or troughs. 
\begin{figure}[H]
    \centering
    \begin{minipage}{0.45\textwidth}
\centering
    \begin{tikzpicture}[scale=0.25]

\draw[very thin, gray, dotted] (-10,0) grid (10,10);

\draw[thick, ->] (-10,0) -- (10,0) node[right] {$x$};
\draw[thick, ->] (-10,-4) -- (-10,10) node[above] {$y$};


\draw[thick] (-10,-3) -- (-3,4) -- (-2,3) -- (0,5)--(1,6) --(3,4)-- (5,6) -- (11,0);
\draw[thick, dotted] (-3,4) --(-1,6) -- (2,9) -- (5,6);

\draw[thick,blue] (-10,4) -- (-8,2) -- (-6,4)--(-5,3)--(-2,6)--(2,2)--(3,3)--(4,2)--(8,6);

\filldraw[red] (2,9) circle (4pt);
\node[right, red,shift={(-0.5,0.5)}] at (2,9) {\small$(x_{\mathrm{prim}},h_{\mathrm{prim}})$};
\node[right, blue,shift={(-0.5,0.5)}] at (-5.5,4) {\small$(y_{k},s_{k})$};
\filldraw[blue] (-5,3) circle (4pt);

\end{tikzpicture}
\end{minipage}
$\longrightarrow$
\begin{minipage}{0.45\textwidth}
\centering
    \begin{tikzpicture}[scale=0.25]

\draw[very thin, gray, dotted] (-10,0) grid (10,10);

\draw[thick, ->] (-10,0) -- (10,0) node[right] {$x$};
\draw[thick, ->] (-10,-4) -- (-10,10) node[above] {$y$};


\draw[thick] (-10,-3) -- (-3,4) -- (-2,3) -- (0,5)--(1,6) --(3,4)-- (5,6) -- (11,0);
\draw[thick, dotted] (-3,4) --(-1,6) -- (2,9) -- (5,6);

\draw[thick,blue] (-5,9)--(2,2)--(3,3)--(4,2)--(8,6);

\filldraw[red] (2,9) circle (4pt);
\node[right, red,shift={(-0.5,0.5)}] at (2,9) {\small$(x_{\mathrm{prim}},h_{\mathrm{prim}})$};

\end{tikzpicture}
\end{minipage}
    \caption{Trough reduction}
    \label{fig:troughreduction}
\end{figure}

\begin{lemma}\label{lem:kernelreduction}
    Let $K$ be the kernel defined in \eqref{eqn:newformofkernel}. We denote $K$ by $K^{(\vec{x},\vec{h})}_{\{\vec{y},\vec{s}\}}$ to explicitly show its dependence on the initial and final configuration. For $n>1$, if the $n$-th peak is outside the cone $C_{y_{\mathrm{prim}},s_{\mathrm{prim}}}$, then $K^{(\vec{x},\vec{h})}_{\{\vec{y},\vec{s}\}} = K^{(x_1,h_1;\ldots; x_{n-1},h_{n-1})}_{\{\vec{y},\vec{s}\}}$. For $m>1$, if the $m$-th trough is outside the cone $C^{x_{\mathrm{prim}},h_{\mathrm{prim}}}$, then $\det(I+K^{(\vec{x},\vec{h})}_{\{\vec{y},\vec{s}\}}) = \det(I+K^{(\vec{x},\vec{h})}_{\{y_1,s_1;\cdots;y_{m-1},s_{m-1}\}})$.  Similar results are also true if the first peak is outside the cone $C_{y_{\mathrm{prim}},s_{\mathrm{prim}}}$ or the first trough is outside the cone $C^{x_{\mathrm{prim}},h_{\mathrm{prim}}}$.
    
\end{lemma}
Notice that any troughs outside the cone $C^{x_{\mathrm{prim}},h_{\mathrm{prim}}}$ do not contribute to the probability and any peaks outside the cone $C_{y_{\mathrm{prim}},s_{\mathrm{prim}}}$ also do not contribute, since $\mathbb{P}((\vec{x},\vec{r})_t\leq \{\vec{y},\vec{s}\})$ depends only on the exponential random variables in the box formed by $C^{x_{\mathrm{prim}},h_{\mathrm{prim}}}$ and $C_{y_{\mathrm{prim}},s_{\mathrm{prim}}}$. See Figure \eqref{fig:troughreduction} for an illustration. We will prove the lemma in Section \eqref{subsubsec:kernelreduction}. 

The second lemma explicitly gives the eigenfunction.
\begin{lemma}\label{lem:eigenfunction}
    Assume that $(y_k,s_k)$ is the last trough in the cone $C^{x_{\mathrm{prim}},h_{\mathrm{prim}}}$. Let $F_k = \left(\begin{smallmatrix}0\\
    -1_0\S^{r_k',0}_{1,0}\ket{\delta_0}
  \end{smallmatrix}\right)$. Then,
  \begin{equation}\label{eqn:eigenfunction}
      \lim_{t\to 0^+}K(k,\cdot;k,\cdot)F_k = -F_k, \quad \lim_{t\to 0^+} K(i,\cdot;k,\cdot)F_k = 0
  \end{equation}
  for any $0<i<k.$
\end{lemma}

In the remainder of the section, we give the proof of Proposition \eqref{prop:initialcondition}.
\begin{proof}
We first apply Lemma \eqref{lem:kernelreduction} until it cannot be applied, i.e., either all the peaks are in the cone $C_{y_{\mathrm{prim}},s_{\mathrm{prim}}}$ and all the troughs are in the cone $C^{x_{\mathrm{prim}},h_{\mathrm{prim}}}$, or there is only one peak or one trough left. There are two cases after all the reductions have been done: $(x_{\mathrm{prim}},h_{\mathrm{prim}})$ is in the cone $C_{y_{\mathrm{prim}},s_{\mathrm{prim}}}$ or not. 

If $(x_{\mathrm{prim}},h_{\mathrm{prim}})$ is not in the cone, we cannot apply Lemma \eqref{lem:kernelreduction} because there is only one trough and one peak left. In this case, the kernel becomes a $2\times 2$ matrix operator, where $\tilde{K}_{ij}$ becomes $\tilde{K}_{11}$, which is
 \begin{equation}\label{eqn:local2by2kernel}
\normalfont
\begin{split}
\tilde{K}_{11} = \begin{pmatrix}
            -\S^{r,r'}_{0,0} & D \S_{1,-1}^{l,r+r'-l} \\
            D^{-1}\S^{r+r'-l,l}_{-1,1} & -\S_{0,0}^{r',r}
        \end{pmatrix}
\begin{pmatrix}
W^* & -W^*\mtp^{r-l}\overline{\alphamtp^{-1}D\alphaptp^{-1}}\ptp^{r-l} W\\
0 & W 
\end{pmatrix}\\
\cdot\begin{pmatrix}
            \S^{r',r}_{0,0} & D \S_{1,-1}^{l,r+r'-l}\\
            D^{-1}\S^{r+r'-l,l}_{-1,1} & \S_{0,0}^{r,r'}
        \end{pmatrix}.
\end{split}
\end{equation}
If $(x_{\mathrm{prim}},h_{\mathrm{prim}})$ is outside the cone $C_{y_{\mathrm{prim}},s_{\mathrm{prim}}}$, we have either $r \leq 0$  or $r'\leq 0$ in the kernel \eqref{eqn:local2by2kernel}. If $r \leq 0$, then $r+r'-l\leq 0$ since $r'\leq l$ is always true by definition. In that case, both $\S^{r,r'}_{0,0},\S^{r+r'-l,l}_{-1,1}$ are $0$, so the whole kernel is $0$. If $r \leq 0$, then $r+r'-l\leq 0$ since $r\leq l$ is always true by definition. In that case, both $\S^{r',r}_{0,0},\S^{r+r'-l,l}_{-1,1}$ are $0$. Thus, $\tilde{K}_{11}=0$, the whole kernel in the Fredholm determinant is upper triangular with identities along the diagonal, thus, the determinant is $1$, which is what we want.

Now we consider the case where $(x_{\mathrm{prim}},h_{\mathrm{prim}})$ is in the cone $C_{y_{\mathrm{prim}},s_{\mathrm{prim}}}$. This corresponds to the case $\lim_{t\to 0^+}F(t,h)=0$. To prove this, since $\det(I+K) = \prod_{i}(1+\lambda_i)$ where $\lambda_i$ are eigenvalues of the operator $K$, we present an eigenfunction for the kernel $K$ with eigenvalue $-1$. We assume $y_k,s_k$ is the last trough in the configuration. Let
\begin{equation}\label{eqn:lastelementineigenfunction}
         \begin{split}
         F_k&:=\begin{pmatrix}
         0\\
         -1_0\S_{1,0}^{r_k',0}\ket{\delta_0}
 \end{pmatrix}.
 \end{split}
 \end{equation}
Now we show that
$F =
     \begin{pmatrix}
         0& 0 &\cdots & 0 & F_k^T
     \end{pmatrix}^T$
     is the eigenfunction we want. 
 
 \begin{equation}
         KF = \begin{pmatrix}
                 K(1,\cdot;k,\cdot)F_k\\
                 K(2,\cdot;k,\cdot)F_k\\
                 \vdots\\
                 K(k,\cdot;k,\cdot)F_k
         \end{pmatrix}.
 \end{equation}
By Lemma \eqref{lem:eigenfunction}, we conclude that $F$ is an eigenfunction with eigenvalue $-1$. Thus, the determinant is $0$. 
\end{proof}

In the following subsections, we prove Lemma \eqref{lem:newformofkernel} in section \eqref{subsubsec:absorblem}; we prove Lemma \eqref{lem:kernelreduction} in section \eqref{subsubsec:kernelreduction}; we prove Lemma \eqref{lem:eigenfunction} in section \eqref{subsubsec:eigenfunction}. We want to emphasize one convention. For the purpose of proving the initial condition, there is no difference between arguments involving operators $\mtp$ and $\alphamtp$. Thus, when writing the proof of some properties, we will not explicitly mention $\alphamtp$, it is easy for readers to see that the arguments are the same. For many parts, we will omit the subindex in the $\S$ operator.

\subsubsection{Absorbing lemma}\label{subsubsec:absorblem}
In this subsection, we want to investigate how operators $\mtp, \ptp$ act on the operator $S^{i,j}$. Notice that $\mtp, \ptp$ act nicely on the operator $s^{i,j}$ (recall the definition in \eqref{def:Sijabdef}) by changing the indices. For example, $\ptp s^{n,m} = s^{n-1,m}$. However, for $\S^{i,j}$, one needs to track the terms when $\mtp,\ptp$ hits the indicator functions.
 Let $\Sdelta^{n,m}(x,y) = 2\s^{n,m}(x,y)\cdot \delta_0(x+y)$. For $n,m>0$,
   
    \begin{equation}\label{eqn:fullahitS}
        \normalfont\ptp\S^{n,m}=\S^{n,m}\mtp = \S^{n-1,m}+\Sdelta^{n,m}.
    \end{equation}
     $\Sdelta^{n,m}$ is the operator when the differential operator hits the indicator function. The following lemma illustrates when $\Sdelta$ will be $0$.

\begin{lemma}\label{lem:fulloperatorsgointoSwithdiracdelta}
    For any $-t_1<t_2$, 
    \begin{equation*}
        \normalfont 1_{t_1}\S^{n,m}\mtp\ket{\delta_{t_2}} = 1_{t_1}\S^{n-1,m}\ket{\delta_{t_2}},\quad \bra{\delta_{t_2}}\ptp\S^{n,m}1_{t_1} = \bra{\delta_{t_2}}\S^{n-1,m}1_{t_1} .
    \end{equation*}
    For any $-t_1\neq t_2$, 
    \begin{equation*}
        \normalfont \bra{\delta_{t_1}}\S^{n,m}\mtp\ket{\delta_{t_2}} = \bra{\delta_{t_1}}\S^{n-1,m}\ket{\delta_{t_2}},\quad \bra{\delta_{t_2}}\ptp\S^{n,m}\ket{\delta_{t_1}} = \bra{\delta_{t_2}}\S^{n-1,m}\ket{\delta_{t_1}} .
    \end{equation*}
\end{lemma}
\begin{proof}
   We prove the first one, and the others are the same. By \eqref{eqn:fullahitS},
   \begin{equation*}
    1_{t_1}\S^{n,m}\mtp\ket{\delta_{t_2}}= 1_{t_1}(\S^{n-1,m}+\Sdelta^{n,m})\ket{\delta_{t_2}}.
   \end{equation*}
   The term $1_{t_1}\Sdelta^{n,m}\ket{\delta_{t_2}}$ is $0$ since the Dirac delta function $\delta_0(x+t_2)=0$ for $x>t_1$.
\end{proof}
The next lemma is about how $\mtp^{-1},\ptp^{-1}$ act on $\S^{n,m}$.
\begin{lemma}\label{lem:fullaintegralhitS}
\normalfont
\begin{enumerate}
\item $(\S^{n,m}\ptp^{-1})(x,z)1_{z\geq -x}=\S^{n,m-1}(x,z)$ for all $n,m\in \mathbb{Z}$.
\item $\S^{n,m}\mtp^{-1}=\S^{n+1,m}$ for $n>m+1\geq 1$.
\item $(\mtp^{-1}\S^{n,m})(x,z)1_{x\geq -z}=\S^{n,m-1}(x,z)$ for all $n,m\in \mathbb{Z}$.
\item $\ptp^{-1}\S^{n,m}=\S^{n+1,m}$ for $n> m+1\geq 1$.
\end{enumerate}
\end{lemma}
\begin{proof}
\begin{enumerate}
\item By definition,
\begin{equation}
\begin{split}
(\S^{n,m}\ptp^{-1})(x,z)1_{z\geq -x}=&\int_{z}^\infty dy\int dw e^{-(x+y)w}\frac{(1+2w)^m}{(1-2w)^n}\frac{1}{2}e^{(z-y)/2}\\
&=\int dw e^{-(x+z)w}\frac{(1+2w)^{m-1}}{(1-2w)^n}=\S^{n,m-1}(x,z).
\end{split}
\end{equation}
\item For $n\geq m+1$,
\begin{equation}
\begin{split}
\S^{n,m}\mtp^{-1}=&\int_{-x}^z dy\int dw e^{-(x+y)w}\frac{(1+2w)^m}{(1-2w)^n}\frac{1}{2}e^{(y-z)/2}\\
&=\int dw e^{-(x+z)w}\frac{(1+2w)^m}{(1-2w)^{n+1}}-e^{-(x+z)/2}\int dw \frac{(1+2w)^m}{(1-2w)^{n+1}}.
\end{split}
\end{equation}
When $n> m+1$, the second term is $0$. 
\end{enumerate}
The proofs of $(3),(4)$ are the same.
\end{proof}

The following lemma states that the $\ptp$ in operator $\ptp^{-k}\W\ptp^{k}$ can be absorbed into $S$.  

\begin{lemma}[Absorbing Lemma]\label{lem:fullsidecancel}
 For $0<k<r$, $0<t$,
    \begin{equation*}
        \normalfont1_{0}\S^{l,r}(I-\ptp^{-k}\W\ptp^k)\S^{r,l}1_{0} =1_{0}\S^{l,r-k} (I-W)\S^{r-k,l}1_{0}.
    \end{equation*}
\end{lemma}
\begin{proof}
Using equation \eqref{eqn:Wequiv}, we have
\begin{equation}\label{eqn:localabsorb74}
    (I-\ptp^{-k}\mtp^u\overline{1}^t \W_{1,n}\overline{1}^t\ptp^{d}\ptp^k) = \ptp^{-k}1_t\ptp^{k}- \sum^u_{i=1}\sum_{j=1}^d 4\ptp^{-k}\mtp^{u-i}\ket{\delta_t}\bra{\delta_t}\W_{1,n}^{i-1,j-1}\ket{\delta_t}\bra{\delta_t} \ptp^{d-j}\ptp^{k}. 
\end{equation}
Using Lemma \eqref{lem:fulloperatorsgointoSwithdiracdelta}(the extra $\ptp^{d-j}$ does not change the argument), 
\begin{equation*}
    \bra{\delta_t}\ptp^{d-j}\ptp^k \S^{r,l}1_0 =  \bra{\delta_t}\ptp^{d-j} \S^{r-k,l}1_0,\quad  \ptp^{-k}1_t\ptp^{k} \S^{r,l}1_0=\ptp^{-k}1_t \S^{r-k,l}1_0.
\end{equation*}
Using Lemma $\eqref{lem:fullaintegralhitS}$, we have
\begin{equation*}
    1_0\S^{l,r}\ptp^{-k}\mtp^{u-i}\ket{\delta_t} = 1_0\S^{l,r-k}\mtp^{u-i}\ket{\delta_t},\quad 1_0\S^{l,r}\ptp^{-k}1_t \S^{r-k,l}1_0 = 1_0\S^{l,r-k}1_t \S^{r-k,l}1_0.
\end{equation*}
Plugging this into \eqref{eqn:localabsorb74}, using the equation \eqref{eqn:Wequiv} again, we get the desired relation.
\end{proof}
Using this lemma, we can easily prove Lemma \eqref{lem:newformofkernel}.
\begin{proof}(Proof of Lemma \eqref{lem:newformofkernel}) 
    All the operators surrounding $W,W^*$ are inverse of each other, thus, Lemma \eqref{lem:fullsidecancel} is applicable except for the term
    \[-\mtp^{l-r}\alphamtp W^*\mtp^{r-l}\overline{\alphamtp^{-1}D\alphaptp^{-1}}\ptp^{r-l} W\ptp^{l-r}\alphaptp.\]
    In this term, $\mtp^{l-r}\alphamtp$ in the front and $\ptp^{l-r}\alphaptp$ in the tail can still act on the operator $S$ surrounding it, and the middle part of the kernel remains unchanged.
\end{proof}

\subsubsection{Kernel reduction}\label{subsubsec:kernelreduction}
\begin{lemma}[Kernel reduction]\label{lem:absorbterm}
     Let $0<t$. If $l\leq u$, recall that $W_{1,2} = {\normalfont\mtp}^{-u_1}{\normalfont\ptp^{-d_1}}$; we have
    \begin{equation*}
\begin{split}
&1_{0}\S^{l,r}(I-{\normalfont\mtp^u}\overline{1}^t\W_{1,n}\overline{1}^t{\normalfont\ptp^d})\S^{r,l}1_{0}=1_{0}\S^{l,r-d_1}(I-{\normalfont\mtp}^{u-u_1}\overline{1}^t\W_{2,n}\overline{1}^t{\normalfont\ptp^{d-d_1}})\S^{r-d_1,l}1_{0}.
    \end{split}
    \end{equation*}
    If $r \leq d$, we have the similar formula on the other side. Recall $W_{n-1,n} = {\normalfont\mtp}^{-u_{n-1}}{\normalfont\ptp^{-d_{n-1}}}$. We have
    \begin{equation*}
        1_{0}\S^{l,r}(I-{\normalfont\mtp}^u\overline{1}^t\W_{1,n}\overline{1}^t{\normalfont\ptp^d})\S^{r,l}1_{0}=1_{0}\S^{l-u_n,r}(I-{\normalfont\mtp}^{u-u_n}\overline{1}^t\W_{1,n-1}\overline{1}^t{\normalfont\ptp^{d-d_n}})\S^{r,l-u_n}1_{0}.
    \end{equation*}
\end{lemma}
\begin{remark}[Graphical View]  $l\leq u$ means that the leftmost peak is outside of the cone $C_{0,0}$, then the kernel should be equivalent to the kernel with the leftmost peak not present, since it does not affect the probability.  
\begin{figure}[htbp]
    \centering

    \begin{tikzpicture}[scale=0.2]

\draw[very thin, gray, dotted] (-10,0) grid (10,10);

\draw[very thin, ->] (-10,0) -- (10,0) node[right] {$x$};
\draw[very thin, ->] (0,0) -- (0,10) node[above] {$y$};


\draw[very thin] (-10,10) -- (0,0) -- (10,10);
\draw[thick] (-12,0)--(-8,4)--(-6,2)--(-5,3)--(-4,2)--(-1,5)--(1,3)--(2,4)--(6,0);
 \draw[->, thick] (14,3) -- node[above] {} (17
,3);
  \begin{scope}[xshift=30cm]
    
   \draw[very thin, gray, dotted] (-10,0) grid (10,10);

\draw[very thin, ->] (-10,0) -- (10,0) node[right] {$x$};
\draw[very thin, ->] (0,0) -- (0,10) node[above] {$y$};


\draw[very thin] (-10,10) -- (0,0) -- (10,10);
\draw[dotted,thick, red]  (-12,0)--(-8,4)--(-6,2);
\draw[thick] (-9,-1)--(-6,2)--(-5,3)--(-4,2)--(-1,5)--(1,3)--(2,4)--(6,0);Vertical line
  
  \end{scope}

\end{tikzpicture}
\end{figure}

\end{remark}
\begin{proof}
We will only prove the first equality; the second is the same. Note
    \begin{equation}\label{eqn:local89}
\begin{split}
&1_{0}\S^{l,r}(I-\mtp^u\overline{1}^t\W_{1,n}\overline{1}^t\ptp^d)\S^{r,l}1_{0}=1_{0}\S^{l,r}(I-\mtp^u(I-1_t)\W_{1,n}\overline{1}^t\ptp^d)\S^{r,l}1_{0}.
\end{split}
\end{equation}
Notice the term involving $1_t$ vanishes since 
\begin{equation*}
1_{0}\S^{l,r}\mtp^{u}1_tW_{1,2}=\sum_{i=0}^{l-1}1_{0}\Sdelta^{l-i,r}\mtp^{u-i}1_t\W_{1,2}+\S^{l-u,r}1_t\W_{1,2}=0.
\end{equation*}
The summation is $0$ by Lemma \eqref{lem:dropindicator}. The second term is $0$ since $S^{l-u,r}$ is $0$ when $l\leq u$.
This means that when $ l\leq u$, the first indicator function before $W_{1,2}$ disappears and $\mtp^u$ can cancel the terms in $W_{1,2}$, which is $\mtp^{u} W_{1,2} = \mtp^{u-u_1}\ptp^{-d_1}$. So, 
\begin{equation}\label{eqn:local90}
    \eqref{eqn:local89} =1_0\S^{l,r}(I-\mtp^{u-u_1}\ptp^{-d_1}\overline{1}^t\W_{2,n}\overline{1}^t\ptp^d)\S^{r,l}1_0.
\end{equation}
Then applying Lemma (\ref{lem:fullsidecancel}) to bring $\ptp^{-d_1}$ to the first $S$ and $\ptp^{d_1}$ to the second $S$, we get the desired result.
\end{proof}
Now we can prove the reduction lemma \eqref{lem:kernelreduction}.
\begin{proof}(Proof of Lemma \eqref{lem:kernelreduction}. 
We first prove the reduction for peaks.
Recall the kernel $\tilde{K}_{ij}$ defined in \eqref{eqn:newformofkernel}. If the $n$-th peak is not in the cone $C_{y_{\mathrm{prim}},s_{\mathrm{prim}}}$, that means $d>r_i$ for all $i$. In formula \eqref{eqn:newformofkernel}, all $\ptp^d$ in $\W$ and $\mtp^d$ in $\W^*$ act on $\S^{r_i,r_i'}_{0,0}$ and $D^{-1}\S^{r_i+r_i'-l_i,l_i}_{-1,1}$. Sine $r_i'<l_i$, so $r_i+r_i'-l_i$ is also less than $d$. Thus, we can apply Lemma \eqref{lem:absorbterm}. The term $-W^*\mtp^{r-l}\overline{\alphamtp^{-1}D\alphaptp^{-1}}\ptp^{r-l} W$ is not in the form of the lemma, but with the same arguments as in Lemma \eqref{lem:absorbterm}, it becomes
\[ \underbrace{a^{d-d_n}\overline{1}^tW^*_{1,n-1}\overline{1}^t\ptp^{u-u_n}}_{\text{new }W^* } \underbrace{\mtp^{u_n+r-l}\overline{\alphamtp^{-1}D\alphaptp^{-1}}\ptp^{u_n+r-l}}_{\text{new }H}\underbrace{\mtp^{u-u_n}\overline{1}^tW_{1,n-1}\overline{1}^t\ptp^{d-d_n}}_{\text{new }W}.\]
To see this transformation, in the original proof of Lemma \eqref{lem:absorbterm}, after equation \eqref{eqn:local90}, we apply Lemma \eqref{lem:fullsidecancel} to absorb $\mtp^{u_n}$ to $\S$ on the other side. Now, the term $\mtp^{u_n},\ptp^{u_n}$ just combines with $\mtp^{r-l}\overline{\alphamtp^{-1}D\alphaptp^{-1}}\ptp^{r-l}$ to form the new $H$. Now, it is easy to see that the new $\W, \W^*, H$ and two $\S$ matrices in \eqref{eqn:newformofkernel} are parametrized by the configuration $(x_1,h_1;\cdots;x_{n-1},h_{n-1})$. For a reduction when $x_1$ is not in the cone $C_{y_{\mathrm{prim}},s_{\mathrm{prim}}}$, the proof is the same.

Now we discuss the reduction for troughs. If the last trough $y_m,s_m$ is outside cone $C^{x_{\mathrm{prim}},h_{\mathrm{prim}}}$, then $y_m\geq x_{\mathrm{prim}}$ and $s_m\geq h_{\mathrm{prim}}+x_{\mathrm{prim}}-y_m$. 

In this case, $r'_{m} = (r_{n}-s_m+x_{n}-y_m)/2\leq 0$ and $r_k+r_k'-l_k \leq 0$.   By definition, $\S^{r_k',r_k}_{0,0} = 0,D^{-1}\S^{r_k+r_k'-l_k,l_k}_{-1,1}=0$, so  $K(k,\cdot;j,\cdot) = \begin{pmatrix}
    \ast & \ast\\
    0 & 0
\end{pmatrix}$ and $K(j,\cdot;k,\cdot) = \begin{pmatrix}
    0 & \ast\\
    0 & \ast
\end{pmatrix}$for all $1\leq j \leq k$, so the determinant reduces:
\begin{equation}
    \det((I+K)^k_{i=1,j=1}) = \det((I+K)^{k-1}_{i=1,j=1}).
\end{equation}
The proof for the reduction if $y_1$ is not in the cone is the same.

\end{proof}

\subsubsection{Eigenfunction}\label{subsubsec:eigenfunction}
The goal of this section is to prove Lemma \eqref{lem:eigenfunction}. We start with a property about the operator $\S^{i,j}_{a,b}$.

\begin{lemma}\label{lem:halfS0lemma}
Recall the definition of $s^{n,m}_{a,b}$ in \eqref{def:Sijabdef}.
\begin{equation*} 
    \begin{split}
        &\s_{0,0}^{n,m}(x,-x) = 0 \text{ if } n-2\geq m>0, \quad \s_{0,0}^{n,n-1}(x,-x) = -1/2.\\
        &\stwo^{n,m}(x,-x) = 0 \text{ if } n-1\geq m>0.
    \end{split}
\end{equation*}
\end{lemma}
\begin{proof}
   Evaluating the residue of the integrand,  
\begin{equation}\label{eqn:Sformula}
            \s_{0,0}^{n,m}(x,y) =  (-1)^n\frac{2^{m-n}}{(n-1)!}\sum_{i=0}^{n-1\wedge m}\binom{n-1}{i}\frac{m!}{(m-i)!}(-x-y)^{n-1-i}e^{-\frac{1}{2}(x+y)}.
    \end{equation}
    When $x=-y$ and $n-2\geq m$, the degree of $(-x-y)$ is always positive; thus, it is 0. When $m= n-1$, it can easily be seen that the value is $-1/2$.
    
    Now we prove for $S_{1,0}^{n,m}$, which has residue at both $w=1/2$ and $w=(2\alpha -1)/2$.
\begin{equation}\label{eqn:S2formula}
\begin{split}
&\stwo^{n,m}(x,y) = 2^{m-n-1}(-1)^{n+1} \Bigg\{e^{-\frac{2\alpha-1}{2}(x+y)}\frac{\alpha^m}{(\alpha-1)^n}+\\
&\frac{e^{-\frac{1}{2}(x+y)}}{(n-1)!}\sum_{i=0}^{n-1}\sum_{j=0}^{n-1-i\wedge m}\binom{n-1}{i}\binom{n-1-i}{j}\frac{(-x-y)^i m!(-(n-1-i-j))!}{(m-j)!(1-\alpha)^{n-1-i-j}}\Bigg\}.
\end{split}
\end{equation}
Plugging in $y =-x$, 
\begin{equation}
\begin{split}
\eqref{eqn:S2formula}&=2^{m-n-1}(-1)^{n+1} \left\{\frac{\alpha^m}{(\alpha-1)^n}+\frac{1}{(n-1)!}\sum_{j=0}^{n-1\wedge m}\binom{n-1}{j}\frac{m!(-(n-1-j))!}{(m-j)!(1-\alpha)^{n-j}}\right\}\\
&=2^{m-n-1}(-1)^{n+1} \left\{\frac{\alpha^m}{(\alpha-1)^n}-\sum_{j=0}^{n-1\wedge m}\frac{1}{j!}\frac{m!}{(m-j)!}(\frac{-1}{1-\alpha})^{n-j}\right\}.
\end{split}
\end{equation}
When $n>m$, the last term is
\begin{equation*}
    \begin{split}
    \sum_{j=0}^{m}\frac{1}{j!}\frac{m!}{(m-j)!}(\frac{-1}{1-\alpha})^{n-j})=(1-(1-\alpha))^m(\alpha-1)^{-n}=\frac{\alpha^m}{(\alpha-1)^n},
    \end{split}
\end{equation*}
which cancels the first term in parentheses.
\end{proof}

The proof of Lemma \eqref{lem:eigenfunction} is a direct calculation. The following two lemmas break the checking process into two small chunks. 
\begin{lemma}\label{lem:ibp}
For any $t>0$, for any $k,m \in \mathbb{Z}, n>0$,
    \begin{equation}
1_t\S_{0,0}^{k,m}1_0\S_{0,0}^{n,0}\ket{\delta_0} =- 1_t\S^{k,m-n}_{0,0}\ket{\delta_0}.
    \end{equation}
\end{lemma}
\begin{proof}
    By definition of $\S^{i,j}_{a,b}$ in \eqref{def:Sijabdef}, for $x >t>0$,
    \begin{equation}\label{eqn:local101ibo}
        \begin{split}
          1_t\S_{0,0}^{k,m}1_0\S_{0,0}^{n,0}\ket{\delta_0}=\int_0^\infty dy s^{k,m}_{0,0}(x,y)s^{n,0}_{0,0}(y,0).
        \end{split}
    \end{equation}
    Using integration by parts,
    \begin{equation}
        \eqref{eqn:local101ibo} = \sum_{i=0}^{n-1}2\ s^{k,m-1-i}_{0,0}(x,0)s^{n-i,0}_{0,0}(0,0)  +\int_0^\infty dy s^{k,m-n}_{0,0}(x,y)s^{0,0}_{0,0}(y,0).
    \end{equation}
    The second term is $0$ since $\S^{0,0}_{0,0}=0$. By Lemma \eqref{lem:halfS0lemma}, the terms in the summation are only non-zero when $i=n-1$, in which case $\S_{0,0}^{n-i,0}=-1/2$, thus $\eqref{eqn:local101ibo} =-s_{0,0}^{k,m-n}(x,0)$. Since $x>t$, we have the desired result.
\end{proof}

\begin{lemma}\label{lem:wtoindicator}
Recall the definition of $W$ in \eqref{def:Wdef}. When $d<k$,
    $\lim_{t\to0}\W     \S^{k,0}_{0,0}\ket{\delta_0} = \lim_{t\to 0}1_t \S^{k,0}_{0,0}\ket{\delta_0}$.
\end{lemma}
This lemma reduces the complicated $W$ operator to an indicator function. It manifested the fact after performing the reduction described in Lemma \eqref{lem:kernelreduction}, the actual configuration does not affect the initial condition (all it matters is $(x_{\mathrm{prim}},h_{\mathrm{prim}})$ being above $(y_{\mathrm{prim}},s_{\mathrm{prim}})$).
\begin{proof}
    Using equation \eqref{eqn:Wequiv},
    \[W = 1_t + \sum^u_{i=1}\sum_{j=1}^d 4\mtp^{u-i}\ket{\delta_t}\bra{\delta_t}\W_{1,n}^{i-1,j-1}\ket{\delta_t}\bra{\delta_t} \ptp^{d-j}.\]
    By Lemma \eqref{lem:fulloperatorsgointoSwithdiracdelta}, $\bra{\delta_t} \ptp^{d-j}\S^{k,0}_{0,0}\ket{\delta_0} = \bra{\delta_t} \S^{k-d+j,0}_{0,0}\ket{\delta_0}$. By Lemma \eqref{lem:halfS0lemma}, as $t\to 0$, it is $0$ for $1 \leq j\leq d$. Thus, what left is $1_t$.
\end{proof}
\begin{proof}(Proof of Lemma \eqref{lem:eigenfunction})
    Recall the $\tilde{K}_{ij}$ in \eqref{eqn:newformofkernel},   \begin{equation}
    \normalfont
  \begin{split}
            \tilde{K}_{ij} = \begin{pmatrix}
            -\S^{r_i,r_i'}_{0,0} & D \S_{1,-1}^{l_i,r_i+r_i'-l_i} \\
            D^{-1}\S^{r_i+r_i'-l_i,l_i}_{-1,1} & -\S_{0,0}^{r_i',r_i}
        \end{pmatrix}
\begin{pmatrix}
W^* & -W^*\mtp^{r-l}\overline{\alphamtp^{-1}D\alphaptp^{-1}}\ptp^{r-l} W\\
0 & W 
\end{pmatrix}\\
\cdot\begin{pmatrix}
            \S^{r_j',r_j}_{0,0} & D \S_{1,-1}^{l_j,r_j+r_j'-l_j}\\
            D^{-1}\S^{r_j+r_j'-l_j,l_j}_{-1,1} & \S_{0,0}^{r_j,r_j'}
        \end{pmatrix}.
  \end{split}
    \end{equation}
To compute $K_{kk}F_k$, by Lemma \eqref{lem:ibp},
\begin{equation*}
    \begin{pmatrix}
            \S^{r_k',r_k}_{0,0} & D \S_{1,-1}^{l_k,r_k+r_k'-l_k}\\
            D^{-1}\S^{r_k+r_k'-l_k,l_k}_{-1,1} & \S_{0,0}^{r_k,r_k'}
        \end{pmatrix}\begin{pmatrix}
         0\\
         -1_0\S_{1,0}^{r_k',0}\ket{\delta_0}
 \end{pmatrix}=\begin{pmatrix}
         -D \S_{1,-2}^{l_k,r_k-l_k}\ket{\delta_0}\\
         -\S_{0,-1}^{r_k,0}\ket{\delta_0}
 \end{pmatrix}.
\end{equation*}
Next, as $t \to 0^+$,
\begin{equation*}
    \begin{split}
        &\begin{pmatrix}
W^* & -W^*\mtp^{r-l}\overline{\alphamtp^{-1}D\alphaptp^{-1}}\ptp^{r-l} W\\
0 & W 
\end{pmatrix}\begin{pmatrix}
         -D \S_{1,-2}^{l_k,r_k-l_k}\ket{\delta_0}\\
         -\S_{0,-1}^{r_k,0}\ket{\delta_0}
 \end{pmatrix}\\&=\begin{pmatrix}
W^* & -W^*\mtp^{r-l}\overline{\alphamtp^{-1}D\alphaptp^{-1}}\ptp^{r-l} 1_t\\
0 & 1_t
\end{pmatrix}\begin{pmatrix}
         -D \S_{1,-2}^{l_k,r_k-l_k}\ket{\delta_0}\\
         -\S_{0,-1}^{r_k,0}\ket{\delta_0}
 \end{pmatrix}\\
 &=\begin{pmatrix}
         0\\
         -1_{t}\S_{0,-1}^{r_k,0}\ket{\delta_0}
 \end{pmatrix}.
    \end{split}
\end{equation*}
The first equality is by Lemma \eqref{lem:wtoindicator} and the second equality is because by Lemma \eqref{lem:fullaintegralhitS}, we have $\mtp^{r-l}\overline{\alphamtp^{-1}D\alphaptp^{-1}}\ptp^{r-l} 1_t\S_{0,-1}^{r_k,0}\ket{\delta_0} = D \S_{1,-2}^{l_k,r_k-l_k}\ket{\delta_0}$. Lastly, by Lemma \eqref{lem:ibp} again, as $t\to 0^+$,
\begin{equation}\label{eqn:local80}
    \begin{pmatrix}
            -1_0\S^{r_k,r_k'}_{0,0} & 1_0D \S_{1,-1}^{l_k,r_k+r_k'-l_k} \\
            1_0D^{-1}\S^{r_k+r_k'-l_k,l_k}_{-1,1} & -1_0\S_{0,0}^{r_k',r_k}
        \end{pmatrix}\begin{pmatrix}
         0\\
         -1_{0}\S_{0,-1}^{r_k,0}\ket{\delta_0}
 \end{pmatrix}=\begin{pmatrix}
         -1_0D \S_{2,-1}^{l_k,r_k'-l_k}\ket{\delta_0}\\
         1_{0}\S_{1,0}^{r'_k,0}\ket{\delta_0}
 \end{pmatrix}.
\end{equation}
For the other part of the kernel $R_{ij}$,
\begin{equation}\label{eqn:local81}
    R_{kk}F_k = \begin{pmatrix}
        1_0\mtp^{r_k'-l_k}\overline{\alphamtp^{-1}D\alphaptp^{-1}}\ptp^{r_k'-l_k}1_0\S_{1,0}^{r'_k,0}\ket{\delta_0}\\
        0
    \end{pmatrix}=\begin{pmatrix}
        1_0D\S_{2,-1}^{l_k,r_k'-l_k}\ket{\delta_0}\\
        0
    \end{pmatrix}.
\end{equation}
Combining \eqref{eqn:local80} and \eqref{eqn:local81}, we get $K(k,\cdot;k,\cdot)F_k=-F_k$.

To show $K(i,\cdot;k,\cdot)F_k=0$, the procedure is very similar. The main difference is that the term $1_{i<j}(\mtp)^{-u'_{ij}}(\ptp)^{-d'_{ij}}$ in $R_{ij}$ is only present for $i<k$, which will make the second entry in $R_{ik}F_k$ just cancel the entry from $\tilde{K}_{ik}F_k$. More precisely, it can be checked in the same way that
\begin{equation*}
    \tilde{K}_{ik}F_k = \begin{pmatrix}
         -1_0D \S_{2,-1}^{l_i,r_i+r_i'-l_i-r_k}\ket{\delta_0}\\
         1_{0}\S_{1,0}^{r'_i,r_i-r_k}\ket{\delta_0}
 \end{pmatrix},\quad R_{ik}F_k =\begin{pmatrix}
      1_0D \S_{2,-1}^{l_i,r_i+r_i'-l_i-r_k}\ket{\delta_0}\\
         -1_{0}\S_{1,0}^{r'_i,r_i-r_k}\ket{\delta_0}
 \end{pmatrix}.
\end{equation*}
Thus, $K(i,\cdot;k,\cdot)F_k=0$ for $i<k$.
\end{proof}

\section{Scaling limit and the half-space KPZ fixed point}\label{sec:scalinglimit}
 \subsection{Transformation of the kernel}\label{subsec:transformationofkernel}
 In the kernel \eqref{eqn:halfmultipoint}, all $\S$ operators are explicit, and a standard steepest descent method can be applied. The $\W$ operator, which records the initial condition, is harder to compute a limit. However, it has a nice probabilistic interpretation as a Brownian bridge hitting some curves. We will first do some transformations on the kernel.
 Let $\mathfrak{d}_{\vec{x}}^{\vec{h}}(y) =\begin{cases}
     y = h_i  \text{ if } y=x_i,\\
     -\infty \text{ if }  y\neq x_i \text{ for all } i.
 \end{cases}$
 The strict epigraph of a function $f$ is defined to be 
 \begin{equation}\label{eqn:expwalkhiti}
         \mathrm{epi}(f)=\{(m,y): m\in \mathbb{R}, y >f(m)\}.
         \end{equation}

     Recall $\W$ is defined by \Wdef
    Writing out their integral kernel:
\begin{equation}\ptp^{-1}(x,y) = 1_{x\geq y}\tfrac{1}{2}e^{(y-x)/2}, \quad \mtp^{-1}(x,y)=1_{y\geq x}\tfrac{1}{2}e^{(x-y)/2}.
\end{equation}
Notice $\ptp^{-1}(x,y)$ is the transition density of a random walk with Exp$(1/2)$ jump to the left, with mean $2$; $\mtp^{-1}(x,y)$ is the transition density of a random walk with Exp$(1/2)$ jump to the right, with mean $2$. By composing them, we see that $W_{i,i+1}$ is also a transition density function for a random walk, with a drift $2h_{i+1}-2h_i$. We define $W^\circ_{i,i+1} = e^{2h_iD}W_{i,i+1}e^{-2h_{i+1}D}$, which is a shift of the walk $W_{i,i+1}$ to make it mean $0$.

Let $X(W)$ be a random walk with $n-1$ steps such that the transition density of $X_{i}-X_{i-1}$ is $W^\circ_{i,i+1}$. We will simply write $X$ when there is no confusion. Let $X^{ij}(x,y)$ be the transition density of the walk restricted to $i$-th to $j$-th steps, starting from $x$, ending at $y$.
Let $
     \tau = \min\{i: X_i >-2h_i\}$.
     Define the hit operator $X^{\mathrm{hit}_i}$ with the following integral kernel:
     \begin{equation}
         X^{\mathrm{hit}_i}(x,z) =\int^\infty_{-{2h_i}}dy\mathbb{P}_x(\tau = i, X_{i} = y)X^{i,n-1}(y,z).
     \end{equation}
     The integral is well-defined due to the kernel $X^{i,n-1}(y,z)$ has exponential decay in $y$.

 \begin{proposition}\label{prop:halfWasphit}
  \normalfont   Recall the operator $W$ defined in \eqref{def:Wdef},
\begin{equation}
        \W = \mtp^{u}e^{-(t+2h_1)D}\big(\sum_{i=0}^{n-1}X^{\mathrm{hit}_i}\big)e^{(t+2h_n)D}\ptp^{d}.
        \end{equation}
 \end{proposition}
 \begin{proof}
 Pulling out $\mtp^{u},\ptp^{d}$ outside the bracket in \eqref{def:Wdef}, we get
 \[\mtp^{-u}\ptp^{-d}-\overline{1}^{t} W_{1,2}\overline{1}^{t}\cdots\overline{1}^{t}W_{n-1,n}\overline{1}^{t}.\]
         Notice that this operator is a combination of $\ptp^{-1},\mtp^{-1}$ and the projection operator. Since all $W_{i,i+1}(x,y)$ only depend on the difference of $x,y$, we can do a shift of $t$ of the whole operator, and get:
\begin{equation*}
e^{-tD}(\mtp^{-u}\ptp^{-d}-\overline{1}^{0} W_{1,2}\overline{1}^{0}\cdots\overline{1}^{0}W_{n-1,n}\overline{1}^{0})e^{tD}.
\end{equation*}
Then we want to shift the starting and endpoint of $W_{i,j}$ to make it mean $0$, i.e., write 
\begin{equation}
\begin{split}
&(\mtp^{-u}\ptp^{-d}-\overline{1}^{0}W_{1,2}\overline{1}^{0}W\cdots \overline{1}^{0}W_{n-1,n}1^0)\\
&=e^{-2h_1D}\bigg(e^{2h_1D}\mtp^{-u}\ptp^{-d}e^{-2h_{n}D}-\overline{1}^{-2h_1}W^\circ_{1,2}\overline{1}^{-2h_2}\cdots \overline{1}^{-2h_{n-1}}W^\circ_{n-1,n}\overline{1}^{-2h_n}\bigg)e^{2h_{n}D}.
\end{split}
\end{equation}
We use $P^{\mathrm{hit}}(x,y)$ to denote the operator in the bracket, it is the probability density that the random walk $X(W)$ starting from $x$, ends at $y$, being greater than $-2h_i$ at $X_i$ for some $i$.

        We want to further rewrite the probability. 
        \begin{equation}
                \begin{split}
                P^{\mathrm{hit}}=1_{-2h_1}\mtp^{-u}\ptp^{-d}+\sum_{i=2}^n \overline{1}^{-2h_1}W^\circ_{1,i}1_{-2h_i}\W^\circ_{i,i+1}\W^\circ_{i+1,i+2}\cdots \W^\circ_{n-1,n}.
                \end{split}
                \end{equation}
                This formula means that the $P^{\mathrm{hit}}(x,y)$ is summing over the probability that the walk first hits the curve at the $i$-th wedge. Each term in the summation reads: the walk does not hit in the first $i-1$ wedges, then hits at the $i$-th wedge, then the walk can go to the endpoint freely. Using the notation we defined before the proposition, we have $P^{\mathrm{hit}} = \sum_{i=1}^nX^{\mathrm{hit}_i}.$
                Thus the statement is proved.
         \end{proof}
Lastly, we want to combine $W,W^*$ into $S$ in \eqref{eqn:newformofkernel}. Recall that \begin{equation}\label{eqn:newformofkernelrep2}
    \normalfont
  \begin{split}
            \tilde{K}_{ij} = \begin{pmatrix}
            -\S^{r_i,r_i'}_{0,0} & D \S_{1,-1}^{l_i,r_i+r_i'-l_i} \\
            D^{-1}\S^{r_i+r_i'-l_i,l_i}_{-1,1} & -\S_{0,0}^{r_i',r_i}
        \end{pmatrix}
\begin{pmatrix}
W^* & -W^*\mtp^{r-l}\overline{\alphamtp^{-1}D\alphaptp^{-1}}\ptp^{r-l} W\\
0 & W 
\end{pmatrix}\\
\cdot\begin{pmatrix}
            \S^{r_j',r_j}_{0,0} & D \S_{1,-1}^{l_j,r_j+r_j'-l_j}\\
            D^{-1}\S^{r_j+r_j'-l_j,l_j}_{-1,1} & \S_{0,0}^{r_j,r_j'}
        \end{pmatrix}.
  \end{split}
    \end{equation}
By Proposition \eqref{prop:halfWasphit}, the matrix in the middle in \eqref{eqn:newformofkernelrep2} becomes
\begin{equation}\label{eqn:local117}
\begin{pmatrix}
  \mtp^{d}e^{-(t+2h_n)D}(P^{\mathrm{hit}})^*e^{(t+2h_1)D}\ptp^{u}&0\\
  0& \mtp^{u}
\end{pmatrix}
\begin{pmatrix}
 I   &-\mtp^{x_1}\overline{\alphamtp^{-1}D\alphaptp^{-1}}\ptp^{x_1}\\
    &I
\end{pmatrix}
\begin{pmatrix}
    \ptp^{u} & 0\\
    0& e^{-(t+2h_1)D}P^{\mathrm{hit}}e^{(t+2h_n)D}\ptp^{d}
\end{pmatrix}
\end{equation}
Here we used the fact that $u+r-l = x_1$. Now we investigate what's the result of the last matrix in \eqref{eqn:local117} multiplying the last matrix in \eqref{eqn:newformofkernelrep2}.
\begin{multline}\label{eqn:local118}
    P^{\mathrm{hit}}e^{(t+2h_n)D}\ptp^{d}D^{-1}\S^{r_j+r_j'-l_j,l_j}_{-1,1}\\ = \int^\infty_{-{2h_k}}dy\mathbb{P}_x(\tau = k, X_{k} = y)X^{k,n-1}(y,\cdot)e^{(t+2h_n)D}\ptp^{d}D^{-1}\S^{r_j+r_j'-l_j,l_j}_{-1,1}.
\end{multline}
By Lemma \eqref{lem:fullaintegralhitS}, \[1_{-2h_k}X^{k,n-1}(y,\cdot)e^{(t+2h_n)D}\ptp^{d}D^{-1}\S^{r_j+r_j'-l_j,l_j}_{-1,1} = 1_{-2h_k}e^{(t+2h_k)D}D^{-1}\S_{-1,1}^{\frac{r_k-x_k-s_j-y_j}{2},\frac{r_k+x_k-s_j+y_j}{2}}\]

     The reason for the change of the shift operator from $e^{(t+2h_n)D}$ to $e^{(t+2h_k)D}$ is that in order to absorb the random walk transition density into $\S$, one needs to first change it back to the original walk that is not mean $0$, then apply Lemma \eqref{lem:fullaintegralhitS}. The reason why $\ptp^d$ can act directly on $S$ without boundary terms is due to the indicator at the front, which makes all the boundary terms $0$.

     To use a simpler notation for the indices, define
     \begin{equation}\label{def:Sscalingdef}
         \S^{(-x_k,-y_j)}_{a,b}:=\S_{a,b}^{\frac{r_k-x_k-s_j-y_j}{2},\frac{r_k+x_k-s_j+y_j}{2}}.
     \end{equation}
    This is a ``posteriori" notation. All the coefficient in $\S$ will be in the form $(r_k\pm x_k- s_j\pm y_j)/2$ with the constrain that the sign for $x$ and $y$ in the first and second superscripts are different. So, as long as we record the $x$ and $y$ in the first superscript, we know the whole $\S$.

     Further, to use a similar notation as in \cite{MQR21}, we define 
     \begin{equation}\label{def:Sepidef}
         \begin{split}
             &(D^{a}\S)^{\mathrm{epi},-y_j}_{-1,1}(x,z):= \sum_{k=0}^{n-1}\int_{-2h_k}^\infty dy\mathbb{P}_x(\tau = k, X_{k} = y)e^{(t+2h_k)D}D^{a}\S_{-1,1}^{(-x_k,-y_j)}(y,z).
         \end{split}
         \end{equation}
     With this notation, \eqref{eqn:local118} is just $(D^{-1}S)_{-1,1}^{\mathrm{epi,-y_j}}$. With the same procedure, 
     \[P^{\mathrm{hit}}e^{(t+2h_n)D}\ptp^{d}\S^{r_j,r_j'}_{0,0}=\S_{0,0}^{\mathrm{epi,y_j}}.\]
     This completes the analysis of the $22$ entry in the last matrix of \eqref{eqn:local117}.

      \sloppy Now we proceed with the $\ptp^u$ term in the same matrix. For $\ptp^{u}$ acting on $\S^{r_j',r_j}_{0,0}, D \S_{1,-1}^{l_j,r_j+r_j'-l_j}$, it will generate boundary terms. For $D \S_{1,-1}^{l_j,r_j+r_j'-l_j}$, since $l_i-r_i\geq u$ and $r_i'\leq l_i$, thus by Lemma \eqref{lem:halfS0lemma}, all the boundary terms are $0$, and we have
     \[\ptp^{u}\S_{1,-1}^{l_j,r_j+r_j'-l_j}= D^{-1}\S_{1,-1}^{(x_1,y_i)}.\]
     For $\S^{r_j',r_j}_{0,0}$, notice $r_j'-r_j-u =2h_1-2y_i$, which is not necessarily positive. However, in the scaling limit we are going to consider, $y_i \sim \varepsilon^{-1}$ and $h_1\sim \varepsilon^{-3/2}$, thus for the asymptotic purpose, we can assume that $h_1-y_i>0$. So, we can bring $\ptp^{u}$ into $\S_{0,0}^{r_j',r_j}$ and $\ptp^{u}\S_{0,0}^{r_j',r_j}= \S_{0,0}^{(x_1,-y_j)}$. This complete the calculation on one side of \eqref{eqn:newformofkernelrep2}. Do the same calculation on the other side, we have
     \begin{equation}\label{eqn:kernelforscaling}
  \begin{split}
            \eqref{eqn:newformofkernelrep2}= \begin{pmatrix}
            -(\S_{0,0}^{\mathrm{epi},y_i})^* & D\S_{1,-1}^{(x_1,y_i)}e^{-(t+2h_1)D}\\
            ((D^{-1}\S)_{-1,1}^{\mathrm{epi},-y_i})^* & -\S_{0,0}^{(x_1,-y_i)}e^{-(t+2h_1)D}
        \end{pmatrix}
\begin{pmatrix}
 I&-e^{(t+2h_1)D}\mtp^{x_1}\overline{\alphamtp^{-1}D\alphaptp^{-1}}\ptp^{x_1}e^{-(t+2h_1)D}\\
   0 &I
\end{pmatrix}\\
\cdot\begin{pmatrix}
            e^{(t+2h_1)D}\S_{0,0}^{(x_1,-y_j)} & e^{(t+2h_1)D}D\S_{1,-1}^{(x_1,y_j)}\\
           (D^{-1}\S)^{\mathrm{epi},-y_j}_{-1,1} & \S^{\mathrm{epi},y_j}_{0,0}
        \end{pmatrix},
  \end{split}
    \end{equation}
    which is the form in which we will perform an asymptotic analysis.
    \subsection{Point-wise limit of the kernel}\label{sec:pointwiselimit}
    Now we are ready to consider the scaling limit of the TASEP height function. 

    For $\varepsilon>0$, the $1:2:3$ rescaled TASEP height function  is 
    \begin{equation}\label{eqn:halfheightscale}
            \bh^{\varepsilon}(\bt,\bx):=\varepsilon^{1/2}[h(2\varepsilon^{-3/2} \bt, 2\varepsilon^{-1}\bx)+\varepsilon^{-3/2}\bt],
            \end{equation}
        with the initial condition also scaled as
\begin{equation}\label{eqn:halfinitialscale}
            \bh^\varepsilon(0,\bx) := \varepsilon^{1/2}h(0,2\varepsilon^{-1}\bx).
        \end{equation}
    This scaling corresponds to studying the scaling limit to perturbations of density  $1/2$. General density $\rho$ could also be analyzed with the same method. 

    We have the following scaling on all the variables:
\begin{equation}\label{eqn:variablescaling}
\begin{split}
    t^\varepsilon = 2\varepsilon^{-3/2}\bt, \quad &h^\varepsilon_i= \varepsilon^{-1/2}\bh_i+\varepsilon^{-3/2}\bt,\quad x^\varepsilon_i = 2\varepsilon^{-1}\bx_i,\\
    &s^\varepsilon_i=\varepsilon^{-1/2}\bs_i,\quad y^\varepsilon_i = 2\varepsilon^{-1}\by_i.
\end{split}
 \end{equation} 
 For the injection parameter $\alpha$, we can either fix a $\alpha>1/2$, in which case one can derive the formula in the symplectic-unitary transition scheme, or one can weakly scale the parameter around $1/2$, which is the case we will consider in the following. We scale
 \begin{equation}
     \alpha^\varepsilon = \frac{1+\brho\varepsilon^{1/2}}{2}.
 \end{equation}
   Before we state the convergence results, we need to develop some notation for the limiting objects. Let us recall some operators from \cite{MQR21}. For $\bt>0$, 
\begin{equation}
    \begin{split}
        \bS_{\bt,\bx}(\bz_1,\bz_2):&=\tfrac{1}{2\pi i}\int_{C^{\pi/3}_{1}} dw e^{\bt w^3/3+\bx w^2+(\bz_1-\bz_2)w}\\
    &= \bt^{-1/3}e^{\tfrac{2\bx^3}{3\bt^2}-\tfrac{(\bz_1-\bz_2)\bx}{t}}\Ai(-\bt^{-1/3}(\bz_1-\bz_2)+\bt^{-4/3}\bx^2),
    \end{split}
\end{equation}
    where $C_{a_w}^{\pi/3} = \{a_w+re^{\pm i\pi/3}:r\in[0,\infty)\}$ with the orientation going from $\infty e^{-i\pi/3}$ to $\infty e^{i\pi/3}$. This is the integral kernel for the operator $e^{\bx\bD^2+\bt\bD^3/3}$. For $\bt=0$, the operator is still well defined for $\bx>0$. For $\bt_1,\bt_2>0$, it behaves like a group, i.e. $\bS_{\bt_1,\bx_1}\bS_{\bt_2,\bx_2}=\bS_{\bt_1+\bt_2,\bx_1+\bx_2}$. One useful property of the operator is: $\bS_{-\bt,\bx}= (\bS_{\bt,\bx})^*$. We are going to use a variation of this operator. We define
\begin{equation}
        \begin{split}
             &\bS_{a,b}^{\bt,\bx}(\bz_1,\bz_2)=\int_{C_{a_w}^{\pi/3}}\frac{(w+\brho)^b}{(-w+\brho)^a}e^{\bt w^3/3+\bx w^2 +(\bz_1-\bz_2) w}dw,
        \end{split}
    \end{equation}
    where $a_w <-|\brho|$. Define  $\mpar = \brho-D, \ppar = \brho+D$.
    For $\mathfrak{h}\in \mathrm{UC}$, define
    \begin{equation}
            (\bD^c\bS)_{a,b}^{\mathrm{hypo}(\mathfrak{h}),\bt,\bx}(\bz_1,\bz_2)= \mathbb{E}_{\bB(0)=\bz_1}[\bD^{c}\bS_{a,b}^{\bt,\bx-\btau}(\bB(\btau),\bz_2)1_{\btau<\infty}].
    \end{equation}
    where $\bB(x)$ is a Brownian motion with diffusion coefficient $2$ and $\btau$ is the hitting time of the hypograph of $\mathfrak{h}$. The hypograph is defined to be $\mathrm{hypo}(\mathfrak{h})=\{(x,y):y\leq \mathfrak{h}(x)\}$. When $\mathfrak{h}$ is clear from the context, we will omit it from the superscript.

    The state space will be $\mathrm{UC}$, the upper semi-continuous function on half-space with at most linear growth. For a detailed description, see section 3.1 in \cite{MQR21}.

    Now we are ready to state our main convergence theorem.
    \begin{theorem}\label{thm:pointwiseconvergence}
            Let $\mathfrak{h}_0 \in \mathrm{UC}$. Let $\bh^\varepsilon(\bt,\cdot)$ be the rescaled TASEP height function defined in \eqref{eqn:halfheightscale}. Assume $\bh^\varepsilon(0,\bx) \to \mathfrak{h}_0$ in UC. Then for any $\by_1<\cdots<\by_m\in [0,\infty), \bs_1,\cdots,\bs_m \in \mathbb{R}$, 
            \begin{equation}\label{eqn:halffixedpointformula}
                 \lim_{\varepsilon\to0}\mathbb{P}(\bh^\varepsilon(\bt,\by_1)\leq \bs_1,\cdots, \bh^{\varepsilon}(\bt, \by_m)\leq \bs_m) =\Pf(J+ JK^{\mathrm{fp}})_{\{1,\cdots,m\}\times L^2[0,\infty) }   
            \end{equation}  
            where $JK^{\mathrm{fp}}$ maps $\{1,\cdots,m\}\times \mathbb{R}$ to a $2 \times 2$ antisymmetric matrix.
            \begin{equation}
                K^{\mathrm{fp}}_{ij}=R^{\mathrm{fp}}_{ij}+\tilde{K}^{\mathrm{fp}}_{ij}
            \end{equation}
            where
            \begin{equation}
                    \begin{split}
                   R^{fp}_{ij} &=
                   \begin{pmatrix}
                           1_{j<i} e^{\bs_i \bD}e^{(\by_i-\by_j)\bD^2}e^{-\bs_j \bD} & -e^{\bs_i\bD+\by_i\bD^2}\overline{\mpar^{-1}\bD\ppar^{-1}}e^{-\bs_j\bD+\by_j\bD^2}\\
                            0 &  1_{i<j}e^{\bs_i \bD}e^{(\by_j-\by_i)\bD^2}e^{-\bs_j \bD}
                   \end{pmatrix}, 
            \end{split}
            \end{equation}
            and $\tilde{K}^\mathrm{fp}_{ij}$ is
              \begin{equation}
           \begin{split}
               \begin{pmatrix}
                   e^{\bs_i \bD} & 0\\
                   0 & e^{\bs_i \bD}
           \end{pmatrix}
                 \begin{pmatrix}
                     -(\bS_{0,0}^{\mathrm{hypo}(\mathfrak{h}_0),\bt,-\bx_1+\by_i})^* &     \bD\bS_{1,-1}^{-\bt,\bx_1+\by_i}\\
                     (-(D^{-1}\bS)_{-1,1}^{\mathrm{hypo}(\mathfrak{h}_0),\bt,-\bx_1-\by_i})^*& -\bS_{0,0}^{-\bt,\bx_1-\by_i}
                 \end{pmatrix}
                 \begin{pmatrix}
                     I & -e^{\bx_1\bD^2}\overline{\mpar^{-1}\bD\ppar^{-1}}e^{\bx_1\bD^2}\\
                     0 & I
                 \end{pmatrix}\\
                 \begin{pmatrix}
                       \bS_{0,0}^{\bt,\bx_1-\by_j} & -\bD\bS_{1,-1}^{\bt,\bx_1+\by_j}\\
                         -(\bD^{-1}\bS)^{\mathrm{hypo}(\mathfrak{h}_0),\bt,-\bx_1-\by_j}_{-1,1} & \bS^{\mathrm{hypo}(\mathfrak{h}_0),\bt,-\bx_1+\by_j}_{0,0}
                 \end{pmatrix}
                 \begin{pmatrix}
                   e^{-\bs_j \bD} & 0\\
                   0 & e^{-\bs_j \bD}
           \end{pmatrix}.
           \end{split}
             \end{equation}  
    \end{theorem}
  
         To prove the theorem, we will first prove the pointwise convergence theorem for each of the components. In the next section, we will show that the kernel is uniformly bounded in the trace norm. Together, it will imply the convergence of the Fredholm determinant.

         In the following proposition, we will add one more subscript in $\S$ to denote that all the variables in $\S$ are under the scaling we are discussing.
         \begin{proposition}\label{prop:piececonvergence}
                \normalfont Recall $\S^{(x_k,y_i)}_{a,b} = \S_{a,b}^{\frac{x_k+h_k-s_i+y_i}{2},\frac{h_k-x_k-s_i-y_i}{2}}$.
                Let $\z_1^\epsilon = 2\varepsilon^{-1/2}\boldsymbol{\z}_1, \z_2^\varepsilon = 2\varepsilon^{-1/2}\bz_2$. Let $\reflection$ to be the reflection operator.
                \begin{equation}\label{eqn:localscalinglimitproof}
                        \begin{split}
                &2(\varepsilon^{-1/2})^{a-b}\varepsilon^{-1/2}\S^{(x_k,y_i)}_{a,b,\varepsilon}e^{-(2h_k^{\varepsilon})D}\reflection(\z_1^\varepsilon,\z_2^\varepsilon)\\
                \to  & \int_{C_{a_w}^{\pi/3}}\frac{(w+\brho)^b}{(-w+\brho)^a}e^{\bt w^3/3+(\bx_k+\by_i)z^2 +(\bz_2-\bz_1-\bs_i) w}dw= \bS^{-\bt,\bx_k+\by_i}_{a,b}(\bz_1,\bz_2-\bs_i)
                        \end{split}
                        \end{equation}
        where $a_w < -|\boldsymbol{\rho}|$. For $\S^{\mathrm{epi},y_i}_{a,b,\varepsilon}$, 
        \begin{equation}
            \begin{split}
                2(\varepsilon^{-1/2})^{a-b}\varepsilon^{-1/2}\reflection \S^{\mathrm{epi},y_i}_{a,b,\varepsilon}(\z^\varepsilon_1,\z_2^\varepsilon)\to \bS^{\mathrm{hypo}(\mathfrak{h}_0),\bt,-\bx_1+\by_i}_{a,b}(\bz_1,\bz_2+\bs_i).
            \end{split}
        \end{equation}
        Both convergence in the proposition are pointwise convergence.
        \end{proposition}
        \begin{proof}
                 Recall the definition of $\S_{a,b}^{i,j}$ in \eqref{def:Sijabdef}.  
Plugging in all the scaled variables, we have 
\begin{equation*}
    \begin{split}
        &\S^{(x_k,y_i)}_{a,b,\varepsilon}e^{-2h^\varepsilon_k D}\reflection(\z_1^\varepsilon,\z_2^\varepsilon) \\
        &= \int_{\Gamma}dw \frac{(2w+\brho\varepsilon^{1/2})^b}{(-2w+\brho\varepsilon^{1/2})^a}\exp{\{\varepsilon^{-3/2}f_1(w)+\varepsilon^{-1}f_2(w)+\varepsilon^{-1/2}f_3(w)\}}
    \end{split}
\end{equation*}
where 
\begin{equation}\label{eqn:exponentialinasym}
    \begin{split}
        &f_1(w) =-2\bt w +\bt(\log(1+2w)-\log(1-2w))/2,\\
        &f_2(w)=-2(\bx_k+\by_i)(\log(1+2w)+\log(1-2w))/2,\\ 
        &f_3(w) = -(2\bz_1-2\bz_2+2\bh_k) w+(\bh_k-\bs_i)(\log(1+2w)-\log(1-2w))/2.
    \end{split}
\end{equation}
we have $f_1'(0)=f''_1(0)=0$. We want to move the contour to $C_0^{\pi/3}$ since this is a path on which the real part of $f_1$ is decreasing (\cite{BBCS16} Lemma 5.9). We also check here for completeness. $\mathrm{Re}[f_1(re^{\pm i\pi/3})]$ is $-\frac{\bt}{4}(4r+\log(1-2r+4r^2)-\log(1+2r+4r^2))$.
\[\frac{d \mathrm{Re}[f_1(re^{\pm i\pi/3})]}{d r}=-\frac{8 (r^2 + 2 r^4) \bt}{1 + 4 r^2 + 16 r^4}<0.\]
And clearly, for any $\kappa_1>0$, there exists $c_1(\kappa_1)>0$ such that $\mathrm{Re}[f_1(re^{\pm i\pi/3})]<-c_1$ for $r>\kappa_1$.
 But we cannot directly move the contour to $C_0^{\pi/3}$ since there can exist poles at $\pm\varepsilon^{1/2}|\boldsymbol{\rho}|/2$. We need to make a small blip at 0 to include the pole. We use the same contour and notation as in \cite{BBCS16}. The contour $C[\rho],\rho>0$ is defined to be
 \[C[\rho] = \{\rho\varepsilon^{1/2}e^{\theta i}:\theta\in(\tfrac{\pi}{3},\tfrac{5\pi}{3})\}\cup \{re^{\pm \tfrac{\pi}{3}i}:r>\rho\varepsilon^{1/2}\}.\]
 
Fix $N>0$. We will first cut off the contour outside the ball $B_0(N)$. The error would be 
\[ \int_{C_0^{\pi/3}\cap B_0(N)^c}dw \frac{(2w+\brho\varepsilon^{1/2})^b}{(-2w+\brho\varepsilon^{1/2})^a}\exp{\{\varepsilon^{-3/2}f_1(w)+\varepsilon^{-1}f_2(w)+\varepsilon^{-1/2}f_3(w)\}}.\]
Parameterizing the curve by $\{re^{\pm i\pi/3}:r>N\}$, for $\varepsilon$ small enough, there exists $c>0$ such that 
\[\mathrm{Re}[\varepsilon^{-3/2}f_1(w)+\varepsilon^{-1}f_2(w)+\varepsilon^{-1/2}f_3(w)\}]<-c\varepsilon^{-3/2}r,\]
and the term not in the exponent is bounded by $c|r|^{|a|+|b|}$, thus the integral would be $O(e^{-c\varepsilon^{-3/2}N})$, which goes to $0$ as $\varepsilon\to 0$. 

Now we focus on the contour that is $C[\brho]\cap B_0(N)$. We take the Taylor expansion of the exponent and do the change of variable $w \to \varepsilon^{1/2}w/2$, and derive that 
\begin{equation}
        \begin{split}
        &(\varepsilon^{-3/2}f_1(w)+\varepsilon^{-1}f_2(w)+\varepsilon^{-1/2}f_3(w))-(\frac{w^3}{3}\bt+w^2 \bx-w(\bz_1+\bs-\bz_2)\\
        & = \varepsilon^{1/2}O(w^4\bt+w^3\bx+w^2(\br_1-\bs_i))
\end{split}
\end{equation}
Denote $O(\bt,\bx,\br_1-\bs_i)=O(w^4\bt+w^3\bx+w^2(\br_1-\bs_i))$. All extra $(\varepsilon^{1/2})^{b-a}$ cancels the one from \eqref{eqn:localscalinglimitproof}. What we have is
\[ \int dw \frac{(w+\brho)^b}{(-w+\brho)^a}\exp{\{(\frac{w^3}{3}\bt+w^2 \bx-w(\bz_1+\bs-\bz_2)+\varepsilon^{1/2}O(\bt,\bx,\br_1-\bs_i)\}},\]
where the contour is $2\varepsilon^{-1/2}(C[\brho]\cap B_0(N))$.
Using the bound $|e^x-1|\leq e^{|x|}|x|$, if we want to eliminate the error term in the exponent, we pick up an error
\begin{multline*}
     \int dw \frac{(w+\brho)^b}{(-w+\brho)^a}\exp{\{(\frac{w^3}{3}\bt+w^2 \bx-w(\bz_1+\bs-\bz_2)\}}\\
     \cdot |e^{\varepsilon^{1/2}O(w^4\bt+w^3\bx+w^2(\br_1-\bs_i))}\varepsilon^{1/2}O(w^4\bt+w^3\bx+w^2(\br_1-\bs_i))|.
\end{multline*}
The term in the second line is less than
\[|e^{N \cdot O(w^3\bt+w^2\bx+w(\br_1-\bs_i))}N\cdot O(w^3\bt+w^2\bx+w(\br_1-\bs_i))|,\]
since the contour is in the ball $B_0(N)$.  We can choose $N$ small enough so that the coefficient of $w^3\bt$ is less than $1/3$, which ensures the exponential decay of the integrand as $\epsilon \to 0$. Thus, the error term is $O(\varepsilon^{1/2})$ which goes to $0$ by the dominated convergence theorem.

  Lastly, we append the contour $2\varepsilon^{-1/2}(C[\brho]\cap B_0(N))$ to infinity. Similar to the cutoff in the first step, due to the exponential decay of the exponent, as $\varepsilon \to 0$, the error of appending the contour goes to $0$. Thus, we get the desired result.
  For $\S^{\mathrm{epi},y_i}_{a,b,\varepsilon}$, recall the definition:
  \begin{equation*}
         \S^{\mathrm{epi},y_i}_{a,b,\varepsilon}(z_1,z_2) = \sum^n_{k=1}\int_{-2h_k}^\infty dz_3 \mathbb{P}_{z_1}(\tau = k, X(W)_{k} = dz_3)e^{(t+2h_k)D}\S_{a,b}^{(-x_k,y_i)}(z_3,z_2).
         \end{equation*}
         Now we plug in the scaling, we have
         \[(\varepsilon^{-1/2})^{a-b}2\varepsilon^{-1/2}\reflection e^{(2h_k^{\varepsilon})D}\S^{(-x_k,y_i)}_{a,b,\varepsilon}\to \bS_{a,b}^{\bt,-\bx_k+\by_i}(\bz_3,\bz_2+\bs_i).\]
         The reason that the $t$ is not present in the scaled shift operator is because $h_k^\varepsilon = \varepsilon^{-3/2}\bt+\varepsilon^{-1/2}\bh_k$, thus we need to re-shift by $t^\varepsilon$ to place the random walk in the correct scale, thus it does not appear in the scaling. Now the probability term becomes
         \[\mathbb{P}_{-2\varepsilon^{-1/2}\bz_1}(\tau^\varepsilon= k \varepsilon^{-1},X(W)_{\varepsilon}=dz_3^\varepsilon).\]
         The walk now takes steps $\mathrm{Exp}(1/2)-2$ and $2-\mathrm{Exp}(1/2)$, which has variance $8$. Since we are diffusively scaling the random walk, with an extra factor $2$ on the space, thus the walk can be thought of as a walk with steps $(\mathrm{Exp}(1/2)-2)/2$ and $(2-\mathrm{Exp}(1/2))/2$, which has variance $2$. Thus, by Donsker's theorem, $\mathrm{ExpWalk(W_{\varepsilon})}$ converges locally uniformly to a Brownian motion with coefficient $2$. Moreover, since we reflected the start and endpoint, now $\tau^\varepsilon$ is the hitting time of the hypograph of $\mathfrak{d}_{\vec{x}}^{\vec{h}}$ rather than hitting the epigraph of $\mathfrak{d}_{\vec{x}}^{-2\vec{h}}$. Using Proposition 3.2 in \cite{MQR21}, we have $\tau^{\varepsilon}\to \btau$ in distribution, where $\btau$ is the time of Brownian motion $\bB$ starting at $\bz_1$, hitting the hypograph of $\mathfrak{h}_0 = \lim_{\varepsilon\to 0} \mathfrak{d}^{\vec{h}^\varepsilon}_{\vec{x}^\varepsilon}$. Thus,
         \begin{equation}
              \begin{split}
                  2(\varepsilon^{-1/2})^{a-b}\varepsilon^{-1/2}\reflection \S^{\mathrm{epi},y_i}_{a,b,\varepsilon}(\z^\varepsilon_1,\z_2^\varepsilon)\to &\int d\bk\int_{-\infty}^{\bh_k}d\bz_3 \mathbb{P}_{\bz_1}(\boldsymbol{\tau}=\bk,\bB(\bk)=\bz_3)\bS_{a,b}^{\bt,-\bk+\by_i-\bx_1}(\bz_3,\bz_2+\bs_i)\\
                  &=\bS^{\mathrm{hypo}(\mathfrak{h}_0),\bt,-\bx_1+\by_i}_{a,b}(\bz_1,\bz_2+\bs_i).
              \end{split}
        \end{equation}
                \end{proof}
        This is the main structure of the kernel. Now we look closely at the exact kernels in \eqref{eqn:kernelforscaling}. 
        There are multiple $D,D^{-1}$ appearing in the kernel. Notice that since we scale the space by $2\varepsilon^{-1/2}$, each $D$ in the new space becomes $\varepsilon^{1/2}\bD/2$, and $D^{-1}$ becomes $2\varepsilon^{-1/2}\bD^{-1}$. Then looking at $\mtp^{-x_1}\overline{\alphamtp^{-1}D\alphaptp^{-1}}\ptp^{-x_1}$, if $x_1$ is scaled diffusively, i.e. $x_1^\varepsilon = 2\varepsilon^{-1}\bx_1$, then by the central limit theorem, 
\begin{equation*}
    \frac{1}{\mtp^{x_1^\varepsilon}}e^{2\varepsilon^{-1/2}x_1D}\to e^{\bx_1\bD^2},\quad e^{-2\varepsilon^{-1/2}x_1D}\frac{1}{\ptp^{x_1^\varepsilon}} \to e^{\bx_1\bD^2}
\end{equation*}
The drift terms will cancel each other since $\overline{\alphamtp^{-1}D\alphaptp^{-1}}(x,y)$ only depends on $x-y$. 
If $x_1^\varepsilon$ is not scaled diffusively, i.e., if $x_1$ always has a fixed distance to the origin, then $x_1^\varepsilon\to 0$, and what is left is just $\overline{\alphamtp^{-1}D\alphaptp^{-1}}$.
Lastly, using the explicit formula in \eqref{remark:bpartialb}, we directly have the limit of the operator $\overline{\alphamtp^{-1}D\alphaptp^{-1}}$:
\[2\varepsilon^{-1/2}\overline{\alphamtp^{-1}D\alphaptp^{-1}}\to \overline{\mpar^{-1}\bD\ppar^{-1}}.\]
Now we can combine all the ingredients to write the limit for the kernel in \eqref{eqn:kernelforscaling}. Notice that we need to conjugate the kernel to become $\begin{pmatrix}
    2\varepsilon^{-1/2}K_{11}&4\varepsilon^{-1} K_{12}\\
    K_{21}&2\varepsilon^{-1/2} K_{22}
\end{pmatrix}$. Using Proposition \eqref{prop:piececonvergence}, $\tilde{K}_{ij}$ converges to
         \begin{equation}
           \begin{split}
               \begin{pmatrix}
                   e^{\bs_i \bD} & 0\\
                   0 & e^{\bs_i \bD}
           \end{pmatrix}
                 \begin{pmatrix}
                     -(\bS_{0,0}^{\mathrm{hypo}(\mathfrak{h}_0),\bt,-\bx_1+\by_i})^* &     \bD\bS_{1,-1}^{-\bt,\bx_1+\by_i}\\
                     (-(D^{-1}\bS)_{-1,1}^{\mathrm{hypo}(\mathfrak{h}_0),\bt,-\bx_1-\by_i})^*& -\bS_{0,0}^{-\bt,\bx_1-\by_i}
                 \end{pmatrix}
                 \begin{pmatrix}
                     I & -e^{\bx_1\bD^2}\overline{\mpar^{-1}\bD\ppar^{-1}}e^{\bx_1\bD^2}\\
                     0 & I
                 \end{pmatrix}\\
                 \begin{pmatrix}
                       \bS_{0,0}^{\bt,\bx_1-\by_j} & -\bD\bS_{1,-1}^{\bt,\bx_1+\by_j}\\
                         -(\bD^{-1}\bS)^{\mathrm{hypo}(\mathfrak{h}_0),\bt,-\bx_1-\by_j}_{-1,1} & \bS^{\mathrm{hypo}(\mathfrak{h}_0),\bt,-\bx_1+\by_j}_{0,0}
                 \end{pmatrix}
                 \begin{pmatrix}
                   e^{-\bs_j \bD} & 0\\
                   0 & e^{-\bs_j \bD}
           \end{pmatrix}.
           \end{split}
             \end{equation} 

        This completes the pointwise asymptotic analysis for $\tilde{K}_{ij}$ in \eqref{eqn:halfmultipoint}. There are two other terms required in \eqref{eqn:halfmultipoint} that require analysis: $(\mtp)^{-u'_{ij}}(\ptp)^{-d'_{ij}},(\ptp)^{-u'_{ji}}(\mtp)^{-d'_{ji}}$ where $u'_{ij} =(y_{j}-y_i-s_{j}+s_i)/2, d'_{ij} = (y_j-y_i+s_j-s_i)/2$. This is the diffusive scaling of the transition density of a random walk; thus, by the central limit theorem,
        \begin{equation}
           \begin{split}
                   2\varepsilon^{-1/2}(\mtp)^{-u'_{ij}}(\ptp)^{-d'_{ij}} \to e^{\bs_i \bD}e^{(\by_j-\by_i)\bD^2}e^{-\bs_j \bD}, \\
                   2\varepsilon^{-1/2}(\ptp)^{-u'_{ji}}(\mtp)^{-d'_{ji}} \to e^{\bs_i \bD}e^{(\by_i-\by_j)\bD^2}e^{-\bs_j \bD}.
               \end{split}    
        \end{equation}
        Lastly, for the element $\mtp^{r_i'}\ptp^{-l'_i}\overline{\alphamtp^{-1}D\alphaptp^{-1}}\mtp^{-l'_j}\ptp^{r_j'}$, 
        \begin{equation}
                \begin{split}
                4 \varepsilon^{-1}\mtp^{r_i'}\ptp^{-l'_i}\overline{\alphamtp^{-1}D\alphaptp^{-1}}\mtp^{-l'_j}\ptp^{r_j'}
                &=4 \varepsilon^{-1}\mtp^{(-s_i^\varepsilon-y_i^\varepsilon+s_j^\varepsilon-y_j^\varepsilon)/2}\overline{\alphamtp^{-1}D \alphaptp^{-1}}\ptp^{(-y_i^\varepsilon-y_j^\varepsilon-s_j^\varepsilon+s_i^\varepsilon)/2}\\
                &\to e^{\bs_i\bD+\by_i\bD^2}\overline{\mpar^{-1}\bD\ppar^{-1}}e^{-\bs_j\bD+\by_j\bD^2}.
        \end{split}
        \end{equation}  
        Together, we complete the proof of Theorem \eqref{thm:pointwiseconvergence}.
\subsection{Trace Norm bounds}\label{sec:tracenorm}
Up to now, we have shown the pointwise convergence of the kernel. In order to show the Fredholm determinant convergence, we need to show that the kernel is convergent in the trace norm; thus, we now want to give a uniform bound of all the kernels above in trace norm.
\begin{proposition}\label{prop:tracenormbound}
Define $M_{k}$ as the multiplication operator such that 
\[M_k f(x) = e^{kx}f(x).\] 
    For any $0<k <1/2$, the operator $M_{-k}\S^{(x_1,y_i)}_{1,-1,\varepsilon}e^{-(2h^\varepsilon_1)D}\S^{\mathrm{epi},-y_j}_{-1,1,\varepsilon}M_{k}$ is bounded in the trace norm, uniformly in $\varepsilon$. 
\end{proposition}
\begin{proof}
In this proof, it should be understood that all intermediate space variables $z_1,z_2\cdots$ are scaled versions, which is $2\varepsilon^{-1/2}\bz_1,2\varepsilon^{-1/2}\bz_2,\cdots$. We start with the operator $\S^{(x_1,y_i)}_{1,-1,\varepsilon}e^{-(2h^\varepsilon_1)D}\S^{\mathrm{epi},-y_j}_{-1,1,\varepsilon}(\z_1,\z_4)$, which is 
    \begin{multline}\label{eqn:local234tracenorm}
        \int d\bz_2\int_0^\infty d \bk \int d\bz_3 \mathbb{P}_{\bz_2}(\tau^\varepsilon = \bk, X(W)_\varepsilon = d\bz_3)\\
        \cdot \S^{(x_1,y_i)}_{1,-1,\varepsilon}(\bz_1,\bz_2+2\bh_1)\S_{-1,1,\varepsilon}^{(k,-y_i)}(\bz_3,\bz_4).
    \end{multline}
    Since $\mathbb{P}_{\bz_2}(\tau^\varepsilon = \bk, X(W)_\varepsilon = d\bz_3)$ is a probability density function,
    \begin{equation}\label{eqn:local443}
            \begin{split}
             \lVert \eqref{eqn:local234tracenorm} \rVert_1 \leq 
             \int d\bz_2\int_0^\infty d \bk\int d\bz_3 \mathbb{P}_{\bz_2}(\tau^\varepsilon = \bk, X(W)_\varepsilon = d\bz_3)\\
             \cdot\lVert \S^{(x_1,y_i)}_{1,-1,\varepsilon}(\bz_1,\bz_2+2\bh_1)\S_{-1,1,\varepsilon}^{(k,-y_i)}(\bz_3,\bz_4)\rVert_1.
    \end{split}
    \end{equation}
    Notice that the last operator is a rank one operator in variable $\bz_1,\bz_4$. Using the fact that the trace norm of a rank one operator is the product of its $L^2$ norm, i.e.
\[\lVert \Ket{f}\Bra{g} \rVert_1 = \lVert f \rVert_{L^2}\lVert g \rVert_{L^2},\] adding the multiplication operator $M_{k},M_{-k}$, the trace norm becomes
    \[[\int d\bz_1 e^{-k \bz_1}(\S^{(x_1,y_i)}_{1,-1,\varepsilon}(\bz_1,\bz_2+2\bh_1))^2]^{1/2}\int d\bz_4 e^{k \bz_4}(\S^{(k,-y_j)}_{-1,1,\varepsilon}(\bz_3,\bz_4))^2]^{1/2}.\] 
    The probability term in \eqref{eqn:local443} is well understood from classical theory. Since we assume that $\bh(0,\cdot)\to \mathfrak{h}_0$ in UC, there exists $C>0$ such that $\bh(0,x),\mathfrak{h}_0(x)<C(1+|x|)$. We cite the following result in \cite{MQR21}(equation B.8): there exist $\kappa>0$ such that 
    \begin{equation}\label{eqn:local444}
            \mathbb{P}_{\bz_2}(\tau^\varepsilon \leq \bk)\leq \exp\{{-\kappa\frac{(\bz_2+C(1+\bk)^2}{\bk}}\}.
    \end{equation}
    From \eqref{eqn:exponentialinasym}, it is easy to see that there exists $c_1,c_2>0$ such that \[\lVert M_{-k}\S^{(x_1,y_i)}_{1,-1,\varepsilon}(\bz_1,\bz_2) \rVert_{L^2}\leq c_1 e^{c_2\bz_2}\] since the convergence error does not depend on the variable $\bz_2$. The $e^{-k\bz_1}$ is required since otherwise the residue from $-|\varepsilon^{1/2}\brho|$ will not decay at $\infty$. Thus, the $d\bz_2$ integral is convergent. For $\bz_3$ and $\bk$, first $\bz_3$ is the place where the random walk $X(W)_\varepsilon$ hits the initial condition. There is a natural bound on the place it hits; by our assumption of the initial condition, we have $\bz_3 \geq -C(1+\bs)$. On the other hand, a mean $0$ random walk with finite variance almost surely cannot grow linearly; thus $\bz_3\leq \bz_2+\varepsilon^{-1/2}\bs$.  
    For the bound on $\lVert\S_{-1,1,\varepsilon}^{(\bk,-\by_i)}(\bz_3,\bz_4) \rVert_{L^2}$ (we are only interested in its behavior in the $\bk$ variable), we compute it explicitly; it is
    
    \begin{equation}\label{eqn:local445}
         \bigg(\int_{C_1}dw_1\int_{C_2}dw_2(\varepsilon\brho^2-4w^2_1)(\varepsilon\brho^2-4w^2_2)\frac{e^{F(w_1)+F(w_2)}}{2\varepsilon^{-1/2}(w_1+w_2)-2|\brho|}\bigg)^{1/2}
     \end{equation}
     where $F$ is the expression in \eqref{eqn:exponentialinasym}. Here we define $\tilde{F}(w_1,\bk)=\varepsilon^{-3/2}f_1+\varepsilon^{-1}f_2$ (notice that $\bx$ in \eqref{eqn:exponentialinasym} becomes $-\bk$ ).We do not need to add terms involving $\br_k$ since we do not consider the regime that $\br_k$ is large; we do not need to add $\bz_3$ since it is not  involved in the expansion of $\varepsilon$.

    Solving $\partial_{w_1}\tilde{F}(w_1,\bk)=0$, we see that two roots are $0$ and $\varepsilon^{1/2}\bk/\bt$. Now we want to move the contour to the critical point $\varepsilon^{1/2}\bk/\bt$. WLOG we can assume that $\bk$ is large enough (since we want to investigate the integrability in $\bk$) so that we do not encounter the pole at $2\varepsilon^{-1/2}(w_1+w_2)-2\varepsilon^{1/2}|\brho|$. On the other hand, we also will not cross the other pole at $1/2$ since if $\varepsilon^{1/2}\bk/\bt \geq 1/2$, the integrand in \eqref{eqn:local445} is analytic and the whole integral reduces to $0$. Thus, we simply take the contour to be $C_{\varepsilon^{1/2}\bk/\bt}^{\pi/3}$, i.e. $\{\varepsilon^{1/2}\bk/\bt+re^{\pm i\pi/3}:r>0\}$. We show that the real part is strictly decreasing:
    \begin{equation}
            \begin{split}
            &\frac{d \mathrm{Re}(\varepsilon^{-3/2}f_1+\varepsilon^{-1}f_2)(\varepsilon^{1/2}\bk/\bt+re^{\pm \pi/3},\bk)}{dr}\\
            &= -\frac{(4 r \bt^2 (2 r \bt^3 + 4 r^3 \bt^3 + \varepsilon^{1/2} \bk (\bt^2 - 4 \varepsilon \bk^2))}{(\varepsilon^{3/2} ((3r^2\bt^2+(2\varepsilon^{1/2}\bk+(-1+r)\bt)^2)) ((3r^2\bt^2+(2\varepsilon^{1/2}\bk+(1+r)\bt)^2)))}<0
    \end{split}
    \end{equation}
    Notice that $(\bt^2 - 4 \varepsilon \bk^2)>0$ exactly because of our restriction $\varepsilon^{1/2}\bk/\bt < 1/2$.
    Now we can use the value of the integrand in \eqref{eqn:local445} at $\varepsilon^{1/2}\bk/\bt$ as an upper bound. 
    We can write $\tilde{F}(\varepsilon^{1/2}w,x)$ in the following form 
    \begin{equation}
        \begin{split}
            &\tilde{F}(\varepsilon^{1/2}w,\bk)\\
            &= (\frac{8\bt w^3}{3}-4w^2\bk)\sum_{n\geq0}\frac{3(2\varepsilon^{1/2}w)^{2n}}{(2n+3)(n+1)}+(8\bt w^3-8w^2\bk) \sum_{n\geq 1}\frac{n(2\varepsilon^{1/2}w)^{2n}}{(2n+3)(n+1)}\\
            &=(\frac{8\bt w^3}{3}-4w^2\bk)\nu_1(2\varepsilon^{1/2}w)+(8\bt w^3-8w^2\bk) \nu_2(2\varepsilon^{1/2}w)
        \end{split}
    \end{equation}
    where both $\nu_1,\nu_2$ are uniformly bounded in absolute value and non-negative (can be seen from the series expansion).
    So plugging $w = \bk/\bt$, we get that there exists $\delta>0$ such that
    \[\tilde{F}(\varepsilon^{1/2}\bk/\bt,\bk)\leq -(\frac{4}{3}-\delta)\frac{\bk^3}{\bt^2}.\]
    So \eqref{eqn:local445} $\in O(e^{-(\frac{4}{3}-\delta)\frac{\bk^3}{\bt^2}})$, which clearly makes the integral in \eqref{eqn:local443} convergent.


    Thus, the trace norm is uniformly bounded in $\varepsilon$.
\end{proof}
 Next, we investigate other types of kernel in \eqref{eqn:kernelforscaling}. Operator with $D,D^{-1}$ will not change the arguments above, so we only need to check the term 
 \[((D^{-1}\S)_{-1,1}^{\mathrm{epi},y_i})^*e^{2h^\varepsilon_1D}\mtp^{-x_1}\overline{\alphamtp^{-1}D\alphaptp^{-1}}\ptp^{-x_1}e^{-2h^\varepsilon_1D}(D^{-1}\S)^{\mathrm{epi},-y_j}_{-1,1}.\]
Write out the integration:
\begin{equation}\label{eqn:local240}
    \begin{split}
        &\int_{\bz_2,\bz_3,\bk_1,\bk_2,\bz_4,\bz_5} (D^{-1}\S)_{-1,1,\varepsilon}^{(-\bk_1,\by_i)}(\bz_2,\bz_1)\mathbb{P}_{\bz_3}(\tau^\varepsilon = \bk_1,X(W)_\varepsilon=d\bz_2)\\
        & (\mtp^{r_i+u-l_i}\overline{\alphamtp^{-1}D\alphaptp^{-1}}\mtp^{r_j+u-l_j})(\bz_3,\bz_4)\cdot\mathbb{P}_{\bz_4}(\tau^\varepsilon = \bk_2,X(W)_\varepsilon=d\bz_5)\\&\cdot(D^{-1}\S)_{-1,1,\varepsilon}^{(-\bk_2,-\by_j)}(\bz_5,\bz_6)
    \end{split}
\end{equation}
Using the same procedure,
\begin{equation}\label{eqn:local241}
        \begin{split}
        \lVert \eqref{eqn:local240} \rVert_1 \leq &\int_{\bz_2,\bz_3,\bk_1,\bk_2,\bz_4,\bz_5} \mathbb{P}_{\bz_3}(\tau^\varepsilon = \bk_1,X(W)_\varepsilon=d\bz_2)\\
        & (\mtp^{r_i+u-l_i}\overline{\alphamtp^{-1}D\alphaptp^{-1}}\mtp^{r_j+u-l_j})(\bz_3,\bz_4)\cdot\mathbb{P}_{\bz_4}(\tau^\varepsilon = \bk_2,X(W)_\varepsilon=d\bz_5)\\
        &\lVert\S_{0,0,\varepsilon}^{(-\bk_1,\by_i)}(\bz_2,\bz_1)\S_{0,0,\varepsilon}^{(-\bk_2,-\by_j)}(\bz_5,\bz_6)\rVert_1
\end{split}
\end{equation}
The middle operator is the differential of a transition probability of exponential random walk; there exists $c_1,c_2>0$ such that 
\[|(\mtp^{r_i+u-l_i}\overline{\alphamtp^{-1}D\alphaptp^{-1}}\mtp^{r_j+u-l_j})(\bz_3,\bz_4)|\leq e^{c_1\bz_3+c_2\bz_4}.\]
From previous calculations,
\begin{equation}
        \begin{split}
        \mathbb{P}_{\bz_3}(\tau \leq \bk_1) \leq  \exp\{{-\kappa\frac{(\bz_3+C(1+\bk_1))^2}{\bk_1}}\}\\
         \mathbb{P}_{\bz_4}(\tau \leq \bk_2)\leq  \exp\{{-\kappa\frac{(\bz_4+C(1+\bk_2))^2}{\bk_2}}\}
\end{split}
\end{equation}
Also with a bound on $\bz_2,\bz_5$ that  
\begin{equation}
        \begin{split}
        -(C+1)\bk_1\leq \bz_2\leq \bz_3+\varepsilon^{-1}\bk_1,\\
        -(C+1)\bk_2\leq \bz_5\leq \bz_4+\varepsilon^{-1}\bk_2.
\end{split}
\end{equation}
Together with the bound for $\S_{0,0,\varepsilon}^{(-\bk_1,\by_i)},\S_{0,0,\varepsilon}^{(-\bk_2,-\by_j)}$ in $\bk_1,\bk_2$,
\begin{equation}
        \lVert \S_{0,0,\varepsilon}^{(-\bk_1,-\by_i)}\rVert_{L^2}\leq e^{-c_3\bk_2^3/\bt^2},\quad \lVert \S_{0,0,\varepsilon}^{(-\bk_2,-\by_j)} \rVert_{L^2}\leq e^{-c_4\bk_1^3/\bt^2}.
\end{equation}
Combining these together,we can see that \eqref{eqn:local241} is finite. Analogously, one can show that all the components in the kernel are uniformly bounded in the trace norm.

Now by \cite{MQR21} Proposition 3.2, we have that the kernel converges in the trace norm, which finishes the proof of convergence.

\bibliographystyle{alpha} 
\bibliography{halfspaceTASEP}
\end{document}